\documentclass{amsart}
\usepackage[dvipsnames]{xcolor}
\usepackage{hyperref}
\usepackage{soul}
\usepackage{amsfonts,amscd,amssymb,amsmath,amsthm,mathtools}
\usepackage{xparse,enumerate}
\usepackage{tikz}
\usepackage{pgfplots}

\usetikzlibrary{decorations.pathmorphing}

\usepackage[foot]{amsaddr}
\begin{document}

\newtheorem{theorem}{Theorem}[section]
\newtheorem{lemma}[theorem]{Lemma}
\newtheorem{corollary}[theorem]{Corollary}
\newtheorem{conjecture}[theorem]{Conjecture}
\newtheorem{cor}[theorem]{Corollary}
\newtheorem{proposition}[theorem]{Proposition}
\theoremstyle{definition}
\newtheorem{definition}[theorem]{Definition}
\newtheorem{example}[theorem]{Example}
\newtheorem{claim}[theorem]{Claim}
\newtheorem{remark}[theorem]{Remark}

\newenvironment{pfofthm}[1]
{\par\vskip2\parsep\noindent{\sc Proof of\ #1. }}{{\hfill
$\Box$}
\par\vskip2\parsep}
\newenvironment{pfoflem}[1]
{\par\vskip2\parsep\noindent{\sc Proof of Lemma\ #1. }}{{\hfill
$\Box$}
\par\vskip2\parsep}

%%%%%%%%%%%%%%%%%%%%%%%%%%%%%%%%%%%%%%%%%%
%%% General macros
%%%%%%%%%%%%%%%%%%%%%%%%%%%%%%%%%%%%%%%%%%

\newcommand{\R}{\mathbb{R}}
\newcommand{\T}{\mathcal{T}}
\newcommand{\C}{\mathcal{C}}
\newcommand{\G}{\mathcal{G}}
\newcommand{\Z}{\mathbb{Z}}
\newcommand{\Q}{\mathbb{Q}}
\newcommand{\E}{\mathbb E}
\newcommand{\N}{\mathbb N}

\newcommand{\Def}{\overset{\Delta}{=}}

\newcommand{\supp}{\operatorname{supp}}
\newcommand{\sgn}{\operatorname{sgn}}

%%%%%%%%%%%%%%%%%%%%%%%%%%%%%%%%%%%%%%%%%%
%%% Probability Macros
%%%%%%%%%%%%%%%%%%%%%%%%%%%%%%%%%%%%%%%%%%

\newcommand{\Prob}{\Pr}
\newcommand{\Var}{\operatorname{Var}}
\newcommand{\Exp}{\mathbb{E}}
\newcommand{\expect}{\mathbb{E}}
\newcommand{\1}{\mathbf{1}}
\newcommand{\prob}{\Pr}
\newcommand{\pr}{\Pr}
\newcommand{\filt}{\mathscr{F}}
\DeclareDocumentCommand \one { o }
{%
\IfNoValueTF {#1}
{\mathbf{1}  }
{\mathbf{1}\left\{ {#1} \right\} }%
}
\newcommand{\Bernoulli}{\operatorname{Bernoulli}}
\newcommand{\Binomial}{\operatorname{Binom}}
\newcommand{\Binom}{\Binomial}
\newcommand{\Poisson}{\operatorname{Poisson}}
\newcommand{\Exponential}{\operatorname{Exp}}

%%%%%%%%%%%%%%%%%%%%%%%%%%%%%%%%%%%%%%%%%%
%%% Random Matrix Macros
%%%%%%%%%%%%%%%%%%%%%%%%%%%%%%%%%%%%%%%%%%

\newcommand{\Ai}{\operatorname{Ai}}
\newcommand{\tr}{\operatorname{tr}}
\renewcommand{\det}{\operatorname{det}}

\newcommand{\Image}{\operatorname{Image}}
\newcommand{\Span}{\operatorname{Span}}

%%%%%%%%%%%%%%%%%%%%%%%%%%%%%%%%%%%%%%%%%%
%%% Paper-Specific Macros
%%%%%%%%%%%%%%%%%%%%%%%%%%%%%%%%%%%%%%%%%%

\DeclareDocumentCommand \LEIG { O{N} } { \bar{\lambda_1}^{({#1})}}
\newcommand{\LEIGc}{ x_{\ast}}
\DeclareDocumentCommand \LBE { O{k} } { \mathcal{E}_{#1}}
\DeclareDocumentCommand \zM { O{ } } { {\tilde{z}_{#1}} }
\DeclareDocumentCommand \yM { O{ } } { {\tilde{y}_{#1}} }
\DeclareDocumentCommand \wM {  } { {\tilde{w} }}
\DeclareDocumentCommand \zm { O{ } } { {{\zeta}_{#1}} }
\DeclareDocumentCommand \ym { O{ } } { {{x}_{#1}} }
\DeclareDocumentCommand \G { o } 
{ 
  \IfNoValueTF {#1}
  { G }
  { G_{#1}( \zM[#1], \yM[#1] )}
}

\newcommand{\odK}{ K_o }
\newcommand{\odI}{ Y }
\newcommand{\scK}{ \tilde{K} }

\newcommand{\atomvar}{ Z }
\newcommand{\GUEPP}{ \mathcal{G}}
\newcommand{\PP}{ \mathcal{X}}
\DeclareDocumentCommand \levraw { O{N} }
{ \tilde{\lambda}^{ ({#1})} }
\DeclareDocumentCommand \lev { O{N} }
{% 
  \lambda^{({ {#1} })} 
}

\DeclareDocumentCommand \HSN { m } { \left\|{ #1 }\right\|_{\operatorname{HS}} }
\DeclareDocumentCommand \nuclear { O{\cdot} } { \left\|{ #1 }\right\|_{\nu} }
\DeclareDocumentCommand \opnorm { O{\cdot} } { \left\|{ #1 }\right\|_{\operatorname{op}} }
\newcommand{\Kairy}{ {K}_{\text{Airy}} }
\newcommand{\RES}{\Xi}
\newcommand{\mub}{t_{*}}
\newcommand{\TW}{\textrm{TW}}

\newcommand{\Id}{\operatorname{Id}}
\newcommand{\phiop}{ \boldsymbol{\phi}}
\newcommand{\Laplacian}{ \boldsymbol{\Delta}}
\newcommand{\phiopRZ}{ {\boldsymbol{\phi}}^{R,\zeta}}
\newcommand{\Kop}{ \mathbf{K}}
\newcommand{\Aiop}{\mathbf{{Ai}}}
\newcommand{\Dop}{ \mathbf{D}}
\newcommand{\Mop}{ \mathbf{M}}
\newcommand{\Rop}{ \mathbf{R}}

\newcommand{\osc}{ \varphi }

\DeclareDocumentCommand \Proj { m } { \pi_{#1} } %(\yM[ {#1}]) }

\title[GUE extremal eigenvalues and fractional logarithm]{Extremal eigenvalue correlations in the GUE minor process and a law of fractional logarithm}
\author{Elliot Paquette}
\address{Department of Mathematics, Weizmann Institute of Science}
\email{elliot.paquette@gmail.com}
\author{Ofer Zeitouni}
\address{Department of Mathematics, Weizmann Institute of Science and Courant institute, NYU}
\email{ofer.zeitouni@weizmann.ac.il}
\thanks{The work of both authors was supported by a grant from the Israel Science foundation.
EP gratefully acknowledges the support of NSF Postdoctoral Fellowship DMS-1304057.
}
\date{May 19, 2015}
\maketitle

\begin{abstract}
  Let $\lambda^{(N)}$ be the largest eigenvalue of the $N \times N$ GUE matrix
  which is the $N$th element of the GUE minor process, rescaled to converge to the standard Tracy-Widom distribution. We consider the sequence 
  $\{\lambda^{(N)}\}_{N\geq 1}$ and prove a law of fractional logarithm
  for the limsup:
 $$ \limsup_{N \to \infty} \frac{\lev}{\left( \log N \right)^{2/3} } = 
    \left(\frac{1}{4}\right)^{2/3}\,,\quad
    \mbox{\it 
  almost surely}. $$
  For the liminf, we prove the weaker result that 
   there are constants $c_1, c_2 > 0$ so that
    $$
    -c_1 \leq \liminf_{N \to \infty} 
    \frac{\lev}{\left( \log N \right)^{1/3} } \leq -c_2\,,\quad \mbox{\it
    almost surely.}$$
    We conjecture that in fact, $c_1=c_2=4^{1/3}$.
\end{abstract}
\section{Introduction}
Let $S_n=\sum_{i=1}^n X_i$ be a random walk with i.i.d.\ increments of 
zero mean and unit variance. The celebrated Hartman--Wintner \cite{HW} 
law of
the iterated logarithm (LIL)  states that
$$\limsup_{n\to\infty} \frac{S_n}{\sqrt{2n\log\log n}}=1\,, \quad 
\mbox{\it almost surely}.$$
(Earlier versions of the LIL for bounded increments
were given by Khinchine and by Kolmogorov.)
Since $W_n:=S_n/\sqrt{n}$ is asymptotically standard normal,
the  LIL can be considered as a gauge of the extremal fluctuations
of sequence $\{W_n\}$.

In this paper, we investigate the analogous question for the largest
eigenvalue of the 
minor process of the Gaussian unitary ensemble (GUE) of random matrices. 
We begin by introducing some notation.
Let $\{\atomvar_{i,j}\}_{i,j=1}^\infty$ 
be a doubly infinite array of random variables where
\begin{enumerate}
  \item $\atomvar_{i,j}$ for $i > j$ is a complex 
    centered
    Gaussian of absolute variance $\frac 12$ 
    (that is, the real and imaginary parts of 
    $\atomvar_{i,j}$ are independent centered Gaussian of variance $1/4$),
  \item $\atomvar_{i,i}$ for $i \geq 1$ is a centered real Gaussian
    of variance $1/2$,
  \item $\{\atomvar_{i,j}\}_{i \geq j}$ are mutually independent, and
  \item $\atomvar_{i,j} = \overline{ \atomvar_{j,i} }$ for all $i,j \geq 1.$
\end{enumerate}
Let $\levraw$ be the largest eigenvalue of the $N \times N$ 
Hermitian matrix $G_N=(\atomvar_{i,j})_{i,j=1}^N.$ (The latter 
is a standard GUE(N) matrix.)
Center and scale $\levraw$ by defining
$$\lev = (\levraw - \sqrt{2N})\sqrt{2}N^{1/6}.$$  
A fundamental result in random matrix theory, due to Tracy
and Widom \cite{TW}, is the statement that
$\lev$  
converge in distribution as $N\to \infty$ to a Tracy--Widom variable. 
We study in this paper the analogue of the LIL for the sequence
$\{\lev\}_{N\geq 1}$.

Two ingredients enter into the proof of the Hartman--Wintner LIL: first,
the tail behavior of the sequence $\{W_n\}_{n\geq 1}$ (in the moderate deviations regime) is Gaussian and second,
the correlation between $W_n$ and $W_{n+m}$  begins to decay
only when $m$ is of order $n$. Both facts change when
one deals with the sequence $\{\lev\}$; further, because the Tracy--Widom
has differing (and non Gaussian) behavior in the upper and lower
tails,  extremal fluctuations of $\{\lev\}$ are not symmetric.

Our main result for the upper limit of $\{\lev\}$
is a complete analogue of 
the Hartman--Wintner LIL, except that the iterated logarithm is replaced
by a fractional power of the logarithm.
%We seek the analogue of the law of iterated logarithm for simple random walk on $\Z$ for $(\lev)_{N=0}^\infty.$  For the upper envelope, we show that 
\begin{theorem}
With notation as above, we have
\begin{equation}
  \label{eq-onestar}
    \limsup_{N \to \infty} \frac{\lev}{\left( \log N \right)^{2/3} } = 
    \left(\frac{1}{4}\right)^{2/3}\,,\quad
    \mbox{\it 
  almost surely.}
\end{equation} 
    %uh-oh~:\frac{\sqrt{27}}{64}
  \label{thm:ue}
\end{theorem}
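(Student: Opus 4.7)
The plan is to adapt the classical proof of the Hartman--Wintner LIL. Two inputs drive the constant $(1/4)^{2/3}$ and the exponent $2/3$. First, a uniform-in-$N$ moderate-deviations estimate of Tracy--Widom type,
$$c_1\exp\!\bigl(-\tfrac{4}{3}\,t^{3/2}\bigr) \;\leq\; \Pr\!\bigl(\lev > t\bigr) \;\leq\; c_2\exp\!\bigl(-\tfrac{4}{3}\,t^{3/2}\bigr),$$
valid uniformly for $1 \leq t \leq N^{2/3-\epsilon}$, which I would establish by combining the Hermite kernel representation of the top-eigenvalue density of $\levraw$ with Plancherel--Rotach asymptotics, in the spirit of Ledoux and Rider. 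Second, the heuristic --- rooted in the convergence of the top eigenvalue of the minor process to an Airy$_2$ process --- that the sequence $\{\lev\}$ has correlation length of order $N^{2/3}$, so that within an interval of size $N^{2/3}$ the variable $\lev$ fluctuates by $O(1)$. Granted these two facts, the constant $(1/4)^{2/3}$ appears automatically: setting $t = c(\log N)^{2/3}$ turns the tail into $N^{-\frac{4}{3}c^{3/2}}$, and grouping into blocks of width $\asymp N^{2/3}$ (so that $a_k \asymp k^3$) gives a block-max tail of order $k^{-4c^{3/2}}$, critical precisely at $4c^{3/2}=1$.

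For the upper half of \eqref{eq-onestar}, I would partition $\mathbb{N}$ into blocks $B_k = [a_k, a_{k+1})$ with $a_k \asymp k^3$ and $a_{k+1}-a_k \asymp a_k^{2/3}$. The within-block fluctuation estimate (which I expect to follow from interlacing plus standard eigenvalue rigidity) combined with the tail bound yields
$$\Pr\!\Bigl(\max_{N\in B_k}\lev > c(\log a_k)^{2/3}\Bigr) \;\lesssim\; a_k^{-\frac{4}{3}c^{3/2}} \;\asymp\; k^{-4c^{3/2}}.$$
For $c > (1/4)^{2/3}$ the exponent exceeds $1$, so the first Borel--Cantelli lemma gives $\limsup_N \lev/(\log N)^{2/3} \leq c$ almost surely, and sending $c \downarrow (1/4)^{2/3}$ along a countable sequence yields the upper inequality.

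For the matching lower bound, the block-max probabilities sum to infinity when $c < (1/4)^{2/3}$, so it suffices to show the corresponding events occur infinitely often. Because the matrices $G_{a_k}$ are nested, these events are not independent, and I would apply the Kochen--Stone refinement of the second Borel--Cantelli lemma, reducing the problem to a pairwise decorrelation bound of the form
$$\Pr(A_j \cap A_k) \;\leq\; (1+o(1))\,\Pr(A_j)\,\Pr(A_k), \qquad j \neq k,$$
uniform in most pairs $j < k$, where $A_k = \{\lev[a_k] > c(\log a_k)^{2/3}\}$. The natural tool is the determinantal structure of the GUE minor eigenvalue process: two-point exceedance probabilities can be written as Fredholm determinants in the extended Hermite kernel, and off-diagonal decay of that kernel as $|a_j - a_k|$ grows should produce the required bound. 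A Kolmogorov $0$--$1$ law applied to the i.i.d.\ Gaussian entries then upgrades positive probability to full probability.

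The main obstacle I foresee is the quantitative decorrelation step. Because $a_{k+1}/a_k \to 1$, adjacent blocks share the overwhelming majority of matrix entries, so any decorrelation must be extracted from fine properties of the extended Hermite kernel rather than from a naive independence or coupling argument. Proving two-point Fredholm-determinant estimates that are sharp enough to preserve the constant $(1/4)^{2/3}$, rather than losing it to a suboptimal constant, will require careful uniform asymptotics of the extended Hermite kernel in the moderate-deviations window relevant to the blocks $B_k$, and this is the step I expect to carry most of the technical weight.
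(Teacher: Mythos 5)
Your high-level blueprint matches the paper's in its essentials: a uniform moderate-deviations tail bound, blocks of width $\asymp N^{2/3}$ (equivalently a subsequence $\asymp k^3$), Borel--Cantelli for the upper inequality, and a second-moment/pairwise-decorrelation argument for the lower inequality. The lower-bound plan is in substance what the paper does: the paper runs a second-moment (Paley--Zygmund) computation on the single-point events $\{\lev[N_k] \geq c(\log N_k)^{2/3}\}$ along $N_k = \lceil k^\alpha\rceil$, $\alpha>3$, using a quantitative decorrelation bound (Proposition~\ref{prop:rt_E}) rather than an asymptotic-independence hypothesis; monotonicity of $S_N=\sum_k\mathbf 1[\mathcal E_k]$ upgrades convergence in probability to almost-sure divergence, so no $0$--$1$ law is needed and the worry you flag about whether the tail $\sigma$-algebra is trivial never arises.

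The genuine gap is in your upper bound, at the step you dispatch with \emph{``interlacing plus standard eigenvalue rigidity.''} Interlacing gives $\levraw[n]\le\levraw[j]$ for $n<j$, and in the rescaled variables this becomes
\[
  \lev[n] \;\le\; (\sqrt{2j}-\sqrt{2n})\sqrt 2\,n^{1/6} + \lev[j]\,(n/j)^{1/6}
  \;\approx\; (j-n)\,n^{-1/3} + \lev[j].
\]
Across a block of width $a_k^{2/3}$ the drift term $(j-n)n^{-1/3}$ reaches order $a_k^{1/3}$, which dwarfs $(\log a_k)^{2/3}$; interlacing therefore controls the block maximum only over gaps of size $O(N^{1/3})$, where the drift stays $O(1)$. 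Bulk rigidity does not close the remaining $N^{1/3}$-to-$N^{2/3}$ range either, since it constrains the marginal location of each $\lev[N]$ but says nothing about the joint tail of $\lev[N]$ and $\lev[N']$. The paper fills this range with a separate, and rather delicate, positive-correlation estimate (Proposition~\ref{prop:rtcorrelation}): it bounds $\Pr(\lev[u_1]\ge t_1,\ \lev[u_2]<t_2)$ for $u^{1/3+\delta}\le u_2-u_1\le u^{2/3-\delta}$ by a term $\Delta u/u^{2/3-\beta}$ plus a Gaussian tail in $\Delta t\, u^{1/3}/\sqrt{\Delta u}$, times the single-point tail probability, and then organizes a three-scale net in $\mathbb N$ together with a dyadic decomposition so that interlacing is invoked only at the innermost $O(N^{1/3})$ scale. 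That correlation estimate---proved via a Schur-complement reduction on the extended Hermite kernel in which the connecting kernel $\tilde\phi$ is treated as an approximate identity---carries the technical weight of the upper bound and cannot be replaced by interlacing or rigidity. Your outline identifies the correct regime of difficulty (the $N^{2/3}$-correlation window) but misattributes its resolution to tools that do not reach it.
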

For the lower limit of $\{\lev\}$, we have less precise results. 
\begin{theorem}
  There are constants $c_1, c_2 > 0$ so that
  \begin{equation} 
    \label{eq-twostar}
    -c_1 \leq \liminf_{N \to \infty} \frac{\lev}{\left( \log N \right)^{1/3} } \leq -c_2\,,\quad \mbox{\it
    almost surely.}
  \end{equation} 
  \label{thm:le}
\end{theorem}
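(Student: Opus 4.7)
The proof of Theorem \ref{thm:le} splits into the two directions of (\ref{eq-twostar}), which we call the \emph{eventual lower bound} $\liminf \lev/(\log N)^{1/3} \geq -c_1$ and the \emph{infinitely-often upper bound} $\liminf \lev/(\log N)^{1/3} \leq -c_2$.

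For the eventual lower bound, the plan is to invoke the first Borel--Cantelli lemma. The key ingredient is a uniform (in $N$) left-tail estimate for the scaled top eigenvalue of the form
$$ \pr\bigl(\lev \leq -t\bigr) \leq C \exp(-t^3/c), $$
valid throughout the moderate-deviations range $0 \leq t \leq N^{\alpha}$ for some $\alpha>0$. Such a bound can be extracted from the methods of Ledoux--Rider and Aubrun, or directly from the determinantal structure of the GUE spectrum together with the known pointwise bounds on the Hermite kernel. Setting $t = c_1 (\log N)^{1/3}$ with $c_1 > c^{1/3}$ makes $\sum_N \pr(\lev \leq -c_1(\log N)^{1/3})$ summable, and first Borel--Cantelli yields $\lev > -c_1 (\log N)^{1/3}$ for all large $N$, almost surely.

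The infinitely-often upper bound is more delicate, because the $\{\lev\}$ are strongly positively correlated: eigenvalues of consecutive minors interlace, and adjacent matrices differ by only a single row and column. The plan is to pass to a sparse subsequence $\{N_k\}$ with $N_{k+1}/N_k \to \infty$ and apply a conditional Borel--Cantelli argument with respect to the filtration $\mathcal{F}_k := \sigma(G_{N_k})$. Writing
$$ G_{N_{k+1}} = \begin{pmatrix} G_{N_k} & A_k \\ A_k^* & B_k \end{pmatrix}, $$
with $B_k$ an independent GUE matrix of dimension $N_{k+1}-N_k$ and $A_k$ an independent Gaussian block, we aim to lower bound the conditional probability
$$ \pr\bigl( \lev[N_{k+1}] \leq -c_2 (\log N_{k+1})^{1/3} \,\big|\, \mathcal{F}_k\bigr) $$
uniformly in the conditioning, by exhibiting a $\sigma(A_k,B_k)$-measurable event that forces the top eigenvalue of $G_{N_{k+1}}$ below the relevant threshold. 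Combined with a matching moderate-deviations lower bound on the left tail of $\lev[M]$ at scale $M=N_{k+1}$, these conditional probabilities sum to infinity for $c_2$ small enough, and Lévy's conditional Borel--Cantelli lemma finishes the argument.

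The principal obstacle is the decoupling step. Cauchy interlacing gives $\lev[N_{k+1}] \geq \lambda_{\max}(B_k)$, which is the wrong direction for our purposes. An \emph{upper} bound on $\levraw[N_{k+1}]$ in terms of $(A_k,B_k)$-measurable quantities has to be extracted from the Schur-complement representation of the eigenvalues of $G_{N_{k+1}}$: at spectral parameters $z \gtrsim \sqrt{2 N_{k+1}}$, and for $N_{k+1}/N_k$ large enough that $z$ lies far above the typical support of the spectrum of $G_{N_k}$, the resolvent $(zI - G_{N_k})^{-1}$ has small operator norm, so the effective perturbation $A_k^*(zI-G_{N_k})^{-1}A_k$ of $B_k$ is well-controlled. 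A careful quantitative analysis of this perturbation, combined with the moderate-deviations lower bound for the left tail of $\lev$, yields some positive $c_2$. This approach is unlikely to recover the conjectured optimal value $4^{1/3}$, consistent with the weaker statement of Theorem \ref{thm:le}.
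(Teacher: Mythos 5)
Your argument for the eventual lower bound (existence of $c_1$) is the same as the paper's: first Borel--Cantelli applied to the Ledoux--Rider left-tail estimate \eqref{eq:ltub}. That part is fine.

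The infinitely-often upper bound has a fatal gap before you ever reach the Schur-complement analysis: the choice of subsequence. You propose $N_{k+1}/N_k \to \infty$, but then $N_k$ grows at least geometrically, so $\log N_k \gtrsim k$, and the \emph{unconditional} probabilities satisfy
\[
\Pr\bigl( \lev[N_k] \leq -c_2 (\log N_k)^{1/3} \bigr)
\asymp \exp\bigl(-\tfrac{c_2^3}{12}\log N_k\bigr) = N_k^{-c_2^3/12} \lesssim e^{-\Omega(k)},
\]
which is summable for every $c_2>0$. Since $\Exp\bigl[\Pr(\mathcal{E}_k\mid\mathcal{F}_{k-1})\bigr] = \Pr(\mathcal{E}_k)$, the conditional probabilities then sum to a finite value almost surely, and Lévy's conditional Borel--Cantelli yields the \emph{opposite} conclusion: only finitely many $\mathcal{E}_k$ occur. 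To get divergence you must take $N_k$ polynomial, say $N_k = k^\alpha$, so that $\Pr(\mathcal{E}_k) \asymp k^{-\alpha c_2^3/12}$ is non-summable for $c_2 < (12/\alpha)^{1/3}$. But then $N_{k+1}/N_k \to 1$, the spectrum of $G_{N_k}$ sits just below $\sqrt{2N_{k+1}}$ (the gap is only $O(N_k^{-1/2} \Delta N_k)$ with $\Delta N_k = O(N_k^{1-1/\alpha})$), and your resolvent $(zI - G_{N_k})^{-1}$ at $z \approx \sqrt{2N_{k+1}}$ is not uniformly small. The perturbation $A_k^*(zI-G_{N_k})^{-1}A_k$ is of the same order as the bulk of $B_k$, and controlling its effect on the top eigenvalue to the Tracy--Widom accuracy $N_{k+1}^{-1/6}$ is precisely the hard part of the problem --- it is not obtained by operator-norm bounds, and would in effect require redoing the kernel asymptotics.

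The paper proceeds differently and avoids conditioning altogether: it takes $N_k = \lceil k^\alpha\rceil$ with $\alpha>6$, works with $S_N = \sum_{k\le N}\mathbf{1}[\mathcal{E}_k]$, and runs a second-moment argument. The crucial input is the decorrelation estimate (Proposition \ref{prop:lt_E}), which bounds $|\Pr(\mathcal{E}_k\cap\mathcal{E}_\ell) - \Pr(\mathcal{E}_k)\Pr(\mathcal{E}_\ell)|$ via a Fredholm-determinant perturbation estimate for the GUE minor kernel restricted to two well-separated lines, valid when $u_1 \geq u_2 + u_2^{2/3}e^{(\log u_1)^{2/3}}$. With this, $\Var(S_N) = o((\Exp S_N)^2)$ and $\Exp S_N \to \infty$, giving $\mathcal{E}_k$ i.o. almost surely. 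Your proposal would need both a polynomial subsequence and a quantitative decoupling at Tracy--Widom scale; the latter is the technical heart of the paper and is not supplied by the outlined resolvent estimate.
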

\noindent
That the scaling of the logarithm in Theorems  \ref{thm:ue} and 
\ref{thm:le} should be different is natural: indeed, for the
Tracy--Widom law $P_{\TW}$ it is known (see \cite[Exercise 3.8.3]{AGZ})
that
\begin{equation} 
	\label{eq-tails}
	\lim_{s\to\infty} \frac{1}{s^{3/2}} \log P_{\TW} 
\left( (s,\infty) \right)=-\frac43\,,\quad
	\lim_{s\to\infty} \frac{1}{s^{3}} \log P_{\TW} 
	\left( (-\infty,-s)) \right)=-\frac1{12}\,,\quad
\end{equation}
The different powers of $s$ in the exponent translate eventually to different
scalings for the logarithm.

The proof of 
Theorems \ref{thm:ue} and 
\ref{thm:le} relies on the joint determinantal structure 
of
%(with explicit kernel $K$, 
%see \eqref{eq:kernel}), of  
the eigenvalues of the matrices 
$\{G_N\}_{N\geq 1}$, which we now describe  following 
\cite{FN,JN,JNerratum}.
%$\left\{
%  \left( \atomvar_{i,j} \right)_{i,j=1}^N \right\}_{N=1}^\infty.$  
Let $\Lambda = \N \times \R.$  We represent the eigenvalues of the 
sequence of matrices $\{G_N\}$ as
%minors of $\left\{ \atomvar_{i,j} \right\}_{i,j=1}^\infty$ as 
a point process $\GUEPP$ on $\Lambda$ by representing for every $N \in \N$ 
the eigenvalues of $G_N$ as points on the line $\left\{ N \right\} \times \R.$  
The process $\GUEPP$, referred to as the GUE minor process,  is determinantal
(with explicit kernel $K$, 
see \eqref{eq:kernel}),  
and various aspects of it have been studied, see 
\cite{JN,FN,Borodin}. 

As is the case for the Hartman-Wintner LIL, three ingredients are needed in 
proving Theorems \ref{thm:ue} and \ref{thm:le}. First, one needs a version
of \eqref{eq-tails} for the distribution of $\lev$, in the form
\begin{align}
	\label{eq-levuppertail}
  C_1(s)e^{-c_u s^{3/2}}&\leq  \Pr(\lev \in \left( (s,\infty) \right)
  \leq 
  C_2(s)e^{-c_u s^{3/2}}\,,\\
	\label{eq-levlowertail}
  C_3(s)e^{-c_l s^{3}}&\leq  \Pr(\lev \in \left( (-\infty,-s) \right)
  \leq 
  C_4(s)e^{-c_l s^{3}}\,,
\end{align}
which are uniform in the range $s\in [0, (\log N)^\gamma]$
for appropriate $\gamma$, and where $c_u=4/3$, $c_l=1/12$,
%EP: o -> O
and $|\log(C_i(s))|=O( \log s)$. 

Second, 
one argues that there is a subsequence 
$N_k=k^\alpha$ sufficiently sparse (with $\alpha>1$) so
that the events
\begin{equation}
  \mathcal{F}_k =  \left\{ \lev[N_k] \geq c_1 (\log N_k)^{2/3} \right\}
  \label{eq:Fk}
\end{equation}
and
\begin{equation}
  \mathcal{E}_k =  \left\{ \lev[N_k] < -c_2 (\log N_k)^{1/3} \right\}
  \label{eq:Ek}
\end{equation}
are approximately independent, that is that
\begin{equation}
  \label{eq-Fk}
  \Pr \left( \mathcal{F}_k \cap \mathcal{F}_\ell \right) =
  \Pr \left( \mathcal{F}_k\right) \Pr \left(\mathcal{F}_\ell \right)( 1+ o(1)),
\end{equation}
with a similar estimate for $\mathcal{E}_k$. 
This leads to a lower bound for 
$\limsup_{k\to\infty}$ $ \lev[N_k] 
(\log N_k)^{-2/3} $
and to an upper bound for
\(
\liminf_{k\to\infty} \lev[N_k] (\log N_k)^{-1/3}.
\)
Due to work of~\cite{FN}, we know 
that the correlations of $\lev$ and $\lev[N+\Theta(N^{2/3})]$ 
are nontrivial and nondegenerate in the limit. 
%EP: \pm -> +, we are talking here about decorrelation
This leads to the choice $\alpha=3 + \epsilon$.
The challenge 
however 
is to extend the decorrelation to the tail events $\mathcal{F}_k$ and
$\mathcal{E}_k$.

Third, we must show that along a subsequence $N_k = k^{\alpha}$ with $\alpha = 3 -\epsilon,$ the behavior of $\left\{\lev[N_k]\right\}_{k=1}^\infty$ determines the behavior of $\left\{ \lev[N] \right\}_{N=1}^\infty.$  In the case of the $\limsup,$ this means that only finitely many of the events 
\[
 \mathcal{F}_k'
 =
 \left\{ \exists\ N ~:~ N_{k-1} < N < N_k, \lev[N] \geq (c_1 +\delta) (\log N)^{2/3} \right\} \cap \mathcal{F}_k^c
\]
occur almost surely.  To do this, we must in effect show that 
\[
	\Pr\left( 
	\mathcal{F}_k'
	\right)
	\ll
	\Pr\left( 
	\mathcal{F}_k\right),
\]
which is to say that $\lev[N]$ for $N_{k-1} < N < N_k$ are highly correlated.  This leads to the upper bound for $\limsup_{N \to \infty} \lev[N]\left( \log N \right)^{-2/3}.$
In the case of the $\liminf,$ for which we are unable to prove a sufficiently sharp decorrelation inequality, we produce a lower bound for the $\liminf$ simply by applying Borel-Cantelli over the whole sequence (a slight, suboptimal, improvement, can be attained easily using eigenvalue interlacing).

The proof of all three steps
rely
%Our proofs of both Theorem~\ref{thm:ue} and Theorem~\ref{thm:le} rely 
heavily on the study  of the kernel $K$.
The upper tail 
\eqref{eq-levuppertail}
is considerably simpler to handle because 
$$\Pr(\mathcal{F}_k)=\det(\Id-K|_{ {\mathcal{I}}_k}),$$
where $K|_{ {\mathcal{I}}_k}$ is the  restriction of $K$ to the single interval
$\mathcal{I}_k:=\{N_k\}\times (s_k,\infty)$, while 
the probability in \eqref{eq-Fk} involves restriction of the kernel 
to two lines. In either case, in handling the upper tail one considers
situations in which the kernel is small, 
and thus tail estimates of the form \eqref{eq-levuppertail} and
\eqref{eq-Fk} 
follow from standard approximations of the determinant and 
(known)
asymptotic expansion of the Hermite polynomials.
%While they may superficially appear to be similar theorems, the proof of Theorem~\ref{thm:ue} is considerably simpler, due to the following easy, general observation.
%Indeed, 
%on account of $\GUEPP$ being distributed exactly as GUE on any fixed line 
%$\left\{ N \right\} \times \R,$ tail estimates on the distribution of
%$\lev$ of the form , valid uniformly in a range the proof of Theorem~\ref{thm:ue} therefore follows by a standard asymptotic expansion of the Hermite polynomials.
%
In contrast, for the lower envelope, substantially more work is required, and
the results are not as sharp. The tail estimates \eqref{eq-levlowertail}
cannot be obtained just from approximation of the kernel $K$,
since one now restricts to the interval $\mathcal{J}_k:=\{N_k\}\times 
(-\infty,-s_k)$, in which the entries of
the kernel
are not exponentially small. 

The first step, namely the left tail asymptotics in  
  \eqref{eq-levlowertail},
could be obtained with some (substantial)
effort by the method of \cite{DIK} used to get similar estimates for the 
Laguerre ensemble and strong asymptotics of the limiting Tracy-Widom tail.
Since we were unable to obtain a sharp result in Theorem \ref{thm:le}, 
we instead use the following
uniform tail bound:
\begin{equation}
  C^{-1} e^{-Ct^3}\leq \Pr \left[
    \lev \leq -t
  \right] \leq C e^{-t^3/C}
  \label{eq:ltub}
\end{equation}
for an absolute constant $C>0$ and all 
$t \leq N^{2/3}$, see \cite[Theorem 1,4]{LedouxRider}.

More difficult is the proof of the second step,
namely the proof of decorrelation estimates
  analogous to \eqref{eq-Fk}. 
  For the upper envelope, these decorrelation estimates are relatively straightforward, and we produce essentially sharp results.
  For the lower envelope, 
  the fact that direct estimates on 
  the restriction of the kernel $K$ to $\mathcal{J}_k$ are not sharp enough
  force us to use a sub-optimal sequence $N_k$; using that
  yields Theorem \ref{thm:le}.
%Conversely, to show the existence of $c_2,$ we must argue that there is a subsequence $N_k$ sufficiently sparse that the events
Even with this non-optimal subsequence,
obtaining the decorrelation estimate \eqref{eq-Fk} with $\mathcal{E}_k$
replacing $\mathcal{F}_k$ involves a careful analysis which represents
much of the technical work in this article; we detail the main result 
in Section \ref{sec-decor} below, after we introduce some notation.

Finally, for the proof of the third step, which we only do for the upper envelope, we must essentially show how the kernel $K$ restricted to two lines $N_1$ and $N_2$ degenerates when those lines are separated by less than $N_1^{2/3}.$  A more detailed overview of the argument is provided in Section \ref{sec:corproof} below, after introducing notation.

We conclude this introduction
by noting that
working with the optimal sequence $N_k=k^3$ would allow one to prove
the following.
\begin{conjecture}
	\label{conj-lower}
With notation as above,
\[
  \liminf_{N \to \infty} \frac{\lev}{\left(\log N\right)^{1/3}} = -4^{1/3}.
\]
\end{conjecture}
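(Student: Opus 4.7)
The plan is to use the optimal subsequence $N_k = k^3$, which sits precisely at the threshold between correlated and decorrelated eigenvalue behavior across minors (since $N_{k+1}-N_k \asymp k^2 \asymp N_k^{2/3}$, the correlation length identified in \cite{FN}). The target constant $4^{1/3}$ is dictated by a Borel--Cantelli balance: with $\log N_k = 3\log k$ and a sharp left tail $\Pr(\lev[N_k]\leq -t) = \exp(-\tfrac{t^3}{12}(1+o(1)))$, the choice $t = c(\log N_k)^{1/3}$ yields $\Pr(\mathcal{E}_k) = k^{-c^3/4+o(1)}$, which straddles the summability boundary exactly at $c = 4^{1/3}$.

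The first task is to sharpen the lower-tail bound \eqref{eq:ltub} to the precise asymptotic
\[
\Pr(\lev \leq -t) = \exp\!\left(-\tfrac{t^3}{12}\bigl(1+o(1)\bigr)\right),
\]
uniformly for $t \in [1,(\log N)^{1/3+\delta}]$, via a Riemann--Hilbert / kernel-asymptotics analysis along the lines of \cite{DIK}. This replaces the unspecified constants in \eqref{eq:ltub} by the sharp value $1/12$ matching \eqref{eq-tails}, and pins down the $4^{1/3}$ threshold.

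The second and most serious task is to prove the two-line decorrelation
\[
\Pr(\mathcal{E}_k \cap \mathcal{E}_\ell) = \bigl(1+o(1)\bigr)\,\Pr(\mathcal{E}_k)\,\Pr(\mathcal{E}_\ell), \qquad k<\ell,
\]
at the critical spacing $N_k = k^3$, with error summable in $k,\ell$. Combined with the second Borel--Cantelli lemma applied at level $c = 4^{1/3}-\epsilon$, this produces infinitely many $\mathcal{E}_k$ almost surely, whence
\[
\liminf_{k\to\infty} \frac{\lev[N_k]}{(\log N_k)^{1/3}} \leq -\bigl(4^{1/3}-\epsilon\bigr)\quad\text{a.s.,}
\]
and hence the same bound for the full sequence. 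The difficulty is that on $\mathcal{J}_k = \{N_k\}\times(-\infty,-s_k)$ the kernel $K$ has only polynomially small entries, so (unlike the upper-envelope case, where the off-diagonal blocks of the two-line kernel are exponentially negligible) the off-diagonal coupling contributes at relative order one to the Fredholm determinant. Extracting a $(1+o(1))$ factorization thus requires identifying an exact cancellation in those off-diagonal blocks, not merely bounding them.

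For the matching lower bound $\liminf \geq -4^{1/3}$, the plan is to combine the first Borel--Cantelli lemma along $N_k$ at level $c = 4^{1/3}+\epsilon$ (for which $\sum_k k^{-c^3/4}$ converges) with a lower-envelope analogue of the third-step analysis of Section~\ref{sec:corproof}: a kernel analysis on the strip $\{N_{k-1},\dots,N_k\}\times\R$ restricted to the lower tail, establishing
\[
\Pr\!\left(\min_{N_{k-1}<N\leq N_k} \lev[N] < -(4^{1/3}+\delta)(\log N)^{1/3}\right) \ll \Pr(\mathcal{E}_k).
\]
Eigenvalue interlacing handles this crudely, enough to obtain the weaker Theorem~\ref{thm:le}, but matching the sharp constant requires a genuine kernel treatment. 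The principal bottleneck is nevertheless the second step above: this is precisely the estimate whose unavailability forces the authors to replace $N_k=k^3$ by a sparser subsequence in Theorem~\ref{thm:le}, and sharpening it to relative accuracy $1+o(1)$ on the lower tail is the central technical obstacle to proving the conjecture.
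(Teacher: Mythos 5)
The statement you are addressing is a conjecture, and the paper does not contain a proof of it; the authors explicitly state that they were unable to obtain the necessary decorrelation in the lower tail at the optimal spacing, and this is the reason Theorem~\ref{thm:le} has unmatched constants. Your write-up correctly derives the threshold value $4^{1/3}$ (from $c_l=1/12$ in \eqref{eq-tails} combined with $\log N_k = 3\log k$, giving $\Pr(\mathcal{E}_k)\approx k^{-c^3/4}$ with summability boundary $c^3=4$), and correctly identifies the ingredients a proof would require. But it does not supply them: the two steps you yourself flag as the serious tasks are exactly where the argument stops.

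Concretely, the decorrelation $\Pr(\mathcal{E}_k\cap\mathcal{E}_\ell)=(1+o(1))\Pr(\mathcal{E}_k)\Pr(\mathcal{E}_\ell)$ must hold at spacing $N_{k+1}-N_k\asymp N_k^{2/3}$, but the paper's Proposition~\ref{prop:lt_E} requires $u_1\geq u_2+u_2^{2/3}e^{(\log u_1)^{2/3}}$, a super-polynomially larger separation that fails for $N_k=k^3$. You observe that on $\mathcal{J}_k$ the off-diagonal blocks of the two-line kernel are only polynomially small so that closing the gap "requires identifying an exact cancellation in those off-diagonal blocks, not merely bounding them," and then do not produce that cancellation --- but this is precisely the obstruction the authors record. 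Likewise, the matching bound $\liminf\geq -4^{1/3}$ cannot come from Borel--Cantelli over the full sequence (which at sharp tail $\exp(-t^3/12)$ gives only $12^{1/3}$) and requires a lower-tail analogue of the correlation analysis of Section~\ref{sec:corproof}; you acknowledge this and again do not supply it. The DIK-type sharpening of \eqref{eq:ltub} to uniform $\exp(-t^3(1+o(1))/12)$ is likewise only invoked. What you have written is an accurate roadmap that largely mirrors the paper's own discussion of why the conjecture is open; it does not resolve any of the open steps, and hence is not a proof.
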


\subsection*{Structure of the paper}
In Section~\ref{sec-decor}, we define the kernel $K$, and we state the decorrelation and correlation estimates that constitute the main technical work of the proofs of Theorems~\ref{thm:ue} and \ref{thm:le}.
In Section~\ref{sec:ue} we prove the upper limit theorem, \ref{thm:ue}, and 
in Section~\ref{sec:le} we prove the lower limit theorem, \ref{thm:le} using these estimates.
In Section~\ref{sec:contours} we give a double contour integral representation of the kernel $K$ the scaled kernel that is approximated by the Airy kernel.  
In Section~\ref{sec:corproof} we prove the correlation inequality Proposition~\ref{prop:rtcorrelation}, assuming Airy type estimates on the kernel $K.$
In Section~\ref{sec:sharp} we prove these Airy type estimates, as well as \eqref{eq-levuppertail}, using an approximate Hankel representation of the kernel and minimum phase deformations.
In Section~\ref{sec:off}, we prove that the portion of $K$ corresponding to lines $\left\{ u_1 \right\}$ and $\left\{ u_2 \right\}$ where $|u_1 - u_2| \gg u_1^{2/3}$ are small, and their magnitude is controlled by the separation between $u_1$ and $u_2.$  This forms the basis of both decorrelation estimates.
In Section~\ref{sec:boundedness}, we show 
that
the other parts of the kernel remain bounded for well separated $u_1$ and $u_2.$
In Section~\ref{sec:decorproof}, we give the proof of the decorrelation estimates, Proposition~\ref{prop:rt_E} and \ref{prop:lt_E}.
\section{The kernel and decorrelation and correlation estimates}
\label{sec-decor}
%\submection{Contour integral representation}

In this section, we recall
the GUE minor kernel and describe our basic decorrelation
estimates in terms of it.

\subsection{The kernel}
%will give explicit representation of the GUE minor kernel as a contour integral which is convenient for making estimates.
Define the following table of symbols (\cite[equations 4.9-4.13]{FN}):
\begin{align}
        \phi^{(u_1,u_2)}(x,y)  &= 0, \text{ if } u_1 \geq u_2 \\
        \phi^{(u_1,u_2)}(x,y) &= \frac{1}{(u_2-u_1-1)!}(y-x)^{u_2-u_1-1}\one[y>x], \text{ if } u_1 < u_2 \\
        \Psi_j(x) &= e^{-x^2} H_j(x) \text{ if } j \geq 0 \\
        \Psi_j(x) &= \frac{1}{(-j-1)!}\int_x^\infty (y-x)^{-j-1} e^{-y^2}dy \text{ if } j < 0 \\
        \mathcal{N}_j &= 2^j j! \sqrt{\pi} \\
        \Phi_j(x) &= H_j(x)\frac{1}{\mathcal{N}_j}
        \label{eq:rosetta}
\end{align}
The $H_n(x)$ are the Hermite polynomials normalized so that $\int_\R \Psi_j(x) \Phi_k(x)\, dx = \delta_{j,k}.$
The GUE minor kernel is given by (\cite[equation 4.15]{FN}):
\begin{equation}
  \label{eq:kernel}
        K(u_1,y_1 ; u_2, y_2) = -\phi^{(u_1,u_2)}(y_1,y_2)
        + \sum_{l=1}^{u_2} \Psi_{u_1 - l}(y_1) \Phi_{u_2 - l}(y_2).
\end{equation}
%For any fixed $(u_1,u_1),$ 
%Let $\odK(u_1,y_1 ; u_2, y_2) \Def K + \phi^{(u_1,u_2)}(y_1,y_2).$
In the case that $u_1 \geq u_2,$ this simplifies to be
\[
  %\odK(u_1,y_1 ; u_2, y_2) = 
  K(u_1,y_1 ; u_2, y_2) = 
  e^{-y_1^2}\sum_{l=1}^{u_2} \frac{H_{u_1 - l}(y_1) H_{u_2 - l}(y_2)}{\mathcal{N}_{u_2 - l}},
\]
which can be identified as the usual GUE kernel when $u_1 = u_2.$  Note that we must multiply this kernel by $e^{y_1^2/2 - y_2^2/2}$ to get the usual self-adjoint GUE kernel, but that the Fredholm determinants of this kernel coincide with the usual self-adjoint one as multiplication by $e^{y_1^2/2 - y_2^2/2}$ is a conjugation of the kernel.
\subsection{Decorrelation estimates} 
Define another kernel 
\begin{equation}
  K^D(u_1,y_1 ; u_2, y_2) = \one[u_1 \leq u_2] K(u_1, y_1 ; u_2, y_2).
  \label{eq:KD}
\end{equation}
It is easily verified that $K^D$ induces a determinantal point process $\GUEPP^D$ on $\Lambda,$ which on each line $\left\{ N \right\} \times \R$ is distributed as the $N$-point GUE and for which 
$\left\{ \GUEPP^D \cap 
  (\left\{ N \right\} \times \R )\right\}_{N=1}^\infty$ are mutually independent.
These kernels are not properly scaled to be comparable, however, so we begin by a scaling.  We let $J$ be a scaling factor (see \eqref{eq:J}) and
let $\tilde K$ be given by
\begin{align}
  \tilde K(u_1, y_1 ; u_2, y_2)
  &=
  \frac{J(u_2,y_2)}{J(u_1,y_1)}
  K(u_1, y_1 ; u_2, y_2) \text{ and } \nonumber \\ 
  {\tilde K}^D(u_1, y_1 ; u_2, y_2)
  &=
  \frac{J(u_2,y_2)}{J(u_1,y_1)}
  K^D(u_1, y_1 ; u_2, y_2). \nonumber %\label{}
\end{align}
Let ${\tilde K}_o$ and ${\tilde K}_e$ be defined analogously.  These scalings do not change the associated Fredholm determinants, and hence the associated point processes are unchanged. 

Define
\[
  E(u_1,t_1 ; u_2, t_2)
  =
  \left|
  \Pr\left[ \lev[u_1] \geq t_1 \text{ and } \lev[u_2] \geq t_2 \right]
  -
  \Pr\left[ \lev[u_1] \geq  t_1\right]
  \Pr\left[ \lev[u_2] \geq t_2\right]
  \right|,
\]
and observe that $E$ can also be expressed as 
\[
  E(u_1,t_1 ; u_2, t_2)
  =
  \left|
  \Pr\left[ \lev[u_1] < t_1 \text{ and } \lev[u_2] < t_2 \right]
  -
  \Pr\left[ \lev[u_1] < t_1\right]
  \Pr\left[ \lev[u_2] < t_2\right]
  \right|.
\]
Write
\[
  I = 
\left\{ u_1 \right\} \times [\sqrt{2u_1} + u_1^{-1/6}t_1/\sqrt{2}, \infty) \cup
\left\{ u_2 \right\} \times [\sqrt{2u_2} + u_2^{-1/6}t_2/\sqrt{2}, \infty).
\]
Then we have the identity
\begin{equation}
  E(u_1,t_1;u_2,t_2)
  =
  \left|
  \det( I - \tilde{K}\vert_I)
  -\det( I - {\tilde K}^D\vert_I)
  \right|.
  \label{eq:k_diff}
\end{equation}
Hence by giving pointwise estimates on the kernels and using norm estimates for the differences of Fredholm determinants, we may in turn estimate $E.$
Our main decorrelation estimates are the following.  For the right tail,
\begin{proposition}
	For any $R >0,$
	there are constants $C>0$ and $u_0 >0$ sufficiently large so that for all $0 \leq t_1 \leq R(\log u_1)^{2/3},$ all $0 \leq t_2 \leq R(\log u_2)^{2/3}$ and all $u_1 \geq u_2 + u_2^{2/3} e^{(\log u_1)^{2/3}} \geq u_0,$  
  \[
    \left|
    E(u_1,t_1;u_2,t_2)
    \right| \leq C\frac{u_1^{1/12}u_2^{1/12}}{u_1^{1/2} - u_2^{1/2}} 
    e^{ C( \log u_1)^{5/6} -\tfrac 23 (t_1^{3/2} + t_2^{3/2})}.
  \]
  \label{prop:rt_E}
\end{proposition}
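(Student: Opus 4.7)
The starting point is the determinantal identity \eqref{eq:k_diff},
\[
E(u_1,t_1;u_2,t_2) = \left|\det(I-\tilde{K}\vert_I) - \det(I-{\tilde K}^D\vert_I)\right|,
\]
combined with the standard trace-class perturbation bound
\[
|\det(I-A)-\det(I-B)| \leq \|A-B\|_1 \exp(\|A\|_1+\|B\|_1+1).
\]
The plan is to control $\|\tilde K - \tilde K^D\|_1$ tightly, using the off-diagonal estimates of Section~\ref{sec:off}, and to control the exponential prefactor crudely, using the boundedness estimates of Section~\ref{sec:boundedness}.

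First, observe that by \eqref{eq:KD} one has $\tilde K^D = \tilde K$ whenever $u_1\le u_2$, so on the product domain $I$ the difference $\tilde K - \tilde K^D$ is supported on the single off-diagonal block corresponding to $u_1>u_2$, where it equals $\tilde K(u_1,y_1;u_2,y_2)$ restricted to $I_1\times I_2$ with $I_i=[\sqrt{2u_i}+u_i^{-1/6}t_i/\sqrt{2},\infty)$. This is precisely the cross-kernel studied in Section~\ref{sec:off}. I would integrate the pointwise decay bounds established there, which quantify how small this kernel becomes when $u_1 \geq u_2 + u_2^{2/3}e^{(\log u_1)^{2/3}}$, against the Airy-type tails in the shifted variables $x_i\in[t_i,\infty)$ coming from the one-line asymptotics of Section~\ref{sec:sharp}. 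The expected output is a trace-norm bound
\[
\|\tilde K - \tilde K^D\|_1 \leq C\,\frac{u_1^{1/12}u_2^{1/12}}{u_1^{1/2}-u_2^{1/2}}\, e^{C(\log u_1)^{5/6}-\tfrac{2}{3}(t_1^{3/2}+t_2^{3/2})},
\]
where the polynomial factor $1/(u_1^{1/2}-u_2^{1/2})$ encodes the decorrelation gain from the line separation and the two $e^{-\tfrac{2}{3}t_i^{3/2}}$ factors encode the sharp upper-tail decay, matching the right side of \eqref{eq-levuppertail} on each line.

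Second, the exponential prefactor $\exp(\|\tilde K\vert_I\|_1 + \|\tilde K^D\vert_I\|_1 + 1)$ would be estimated as follows. On each single line $\{u_i\}\times I_i$, after the $J$-scaling the kernel is comparable to the Airy kernel on $[t_i,\infty)$, whose trace norm is bounded as $t_i$ ranges over $[0,R(\log u_i)^{2/3}]$ by a (small) polynomial of $\log u_i$; this is exactly what Section~\ref{sec:boundedness} delivers. The off-diagonal block of $\tilde K$ itself appears in $\|\tilde K\vert_I\|_1$ too, but it is already shown to be small in the first step. Consequently the exponential prefactor is absorbed into $e^{C(\log u_1)^{5/6}}$, and multiplying through yields the proposition.

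The main obstacle is the trace-norm estimate of the off-diagonal block. The kernel $K(u_1,\cdot;u_2,\cdot)$ for $u_1>u_2$ is a rank-$u_2$ sum of Hermite products $\Psi_{u_1-l}(y_1)\Phi_{u_2-l}(y_2)$ at mismatched indices and mismatched scales, and one must extract simultaneously (i) the polynomial decorrelation gain $(u_1^{1/2}-u_2^{1/2})^{-1}$ from the index separation and (ii) the sharp Airy tail $\exp(-\tfrac{2}{3}t_i^{3/2})$ in each variable, uniformly for $t_i$ as large as $R(\log u_i)^{2/3}$. This is precisely what the double-contour integral representation of Section~\ref{sec:contours}, together with the minimum-phase saddle-point deformations of Section~\ref{sec:sharp}, is engineered to produce; the slack $e^{C(\log u_1)^{5/6}}$ is the standard cost of the moderate-deviations regime where $t_i\to\infty$ with $t_i\ll u_i^{c}$, and is harmless for the application to Theorem~\ref{thm:ue}.
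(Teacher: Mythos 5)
Your overall plan matches the paper's in broad strokes—pass to the Fredholm determinant identity, isolate the single off-diagonal block $\one[v_1>v_2]K$ (which since $u_1>u_2$ is indeed just the $(u_1,u_2)$ block, where $K=K_o$), feed in the Section~\ref{sec:off} pointwise bounds, and control the exponential prefactor via Section~\ref{sec:boundedness}. But two steps as written do not go through, and the second one is fatal.

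First, you work on the unbounded domain $I$. The exponential prefactor then requires a finite norm for $\tilde K\vert_I$, including the $(u_2,u_1)$ block, which contains the residue piece $\tilde K_e$. Lemma~\ref{lem:Ke_sup} only gives a bound of the form $C\sqrt{u_2}\exp(-c(\sqrt{u_1}-\sqrt{u_2})^3/\sqrt{u_2}+\Delta)$, which is \emph{constant} in $y_1,y_2$ once $t_i\geq 0$ (so $\Delta=0$); integrated over the unbounded $I$, the Hilbert--Schmidt (let alone trace) norm is infinite. The paper truncates to $I_M$, with the two lines cut at $\sqrt{2u_i}+u_i^{-1/6}(\log u_1)^{100}/\sqrt{2}$, precisely so that $\tilde K_e\vert_{I_M}$ is controlled (Lemma~\ref{lem:KeHS}), and controls the truncation error separately via Lemma~\ref{lem:levuppertail} in \eqref{eq:EvsEM}. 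Without the truncation your prefactor is not finite.

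Second, and more fundamentally, you invoke the trace-class (nuclear) perturbation bound $|\det(I-A)-\det(I-B)|\leq\|A-B\|_1\exp(\cdot)$ and propose to obtain a nuclear-norm bound on $\tilde K-\tilde K^D$ by ``integrating the pointwise decay bounds'' of Section~\ref{sec:off}. But a pointwise bound of the product form $|\tilde K_o(y_1,y_2)|\leq A\,f(y_1)g(y_2)$ only controls the Hilbert--Schmidt norm, not the nuclear norm: nothing the paper proves gives a trace-norm estimate of the far-off-diagonal block, and the $\mathbf{K}_1\mathbf{K}_2$ factorization of Section~\ref{sec:sharp}, which \emph{would} deliver a trace-norm bound via $\nuclear[\Kop_1\Kop_2]\leq\HSN{\Kop_1}\HSN{\Kop_2}$, is established only for $|u_2-u_1|=O(u_1^{2/3-\delta})$, not in the regime $u_1\geq u_2+u_2^{2/3}e^{(\log u_1)^{2/3}}$ of the proposition. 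The paper circumvents this by switching to the $2$-regularized determinant: since $\tilde K\vert_{I_M}$ and $\tilde K^D\vert_{I_M}$ agree on the two diagonal blocks, their traces are equal, and one may apply the Hilbert--Schmidt perturbation bound \eqref{eq:det2lip} for $\det_2$. The Hilbert--Schmidt norm of the cross block is then obtained directly by squaring the Lemma~\ref{lem:rt_Ko_pointwise} pointwise bound and integrating (the quantity $I_{1,2}$), and the prefactor $\exp(\tfrac12(1+\HSN{A}+\HSN{B})^2)$ is controlled by Lemma~\ref{lem:KoHS} giving $\HSN{\cdot}^2=O((\log u_1)^{5/6})$ — which is also where the $e^{C(\log u_1)^{5/6}}$ in the proposition statement comes from, not merely as ``slack of the moderate deviation regime.'' Replacing your $\|\cdot\|_1$ bound with the $\det_2$/Hilbert--Schmidt argument, and truncating to $I_M$, repairs the proof.
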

\noindent Note that up to polynomial factors in $t_1$,
$e^{-\tfrac 23 t_1^{3/2}} \sim \Pr\left[ \lev[u_1] \geq t_1\right]^{1/2}.$
For the left tail, we get the same bound, although we lose a multiplicative factor:
\begin{proposition}
  There are constants $C>0$ and $u_0 >0$ sufficiently large so that for all $0 \leq t_1 \leq (\log u_1)^{5/12},$ all $0 \leq t_2 \leq (\log u_2)^{5/12}$ and all $u_1 \geq u_2 + u_2^{2/3} e^{(\log u_1)^{2/3}} \geq u_0,$  
  \[
    \left|
    E(u_1,-t_1;u_2,-t_2)
    \right| \leq C\frac{u_1^{1/12}u_2^{1/12}}{u_1^{1/2} - u_2^{1/2}} e^{ C( \log u_1)^{5/6} }.
  \]
  \label{prop:lt_E}
\end{proposition}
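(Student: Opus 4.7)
The plan is to prove Proposition~\ref{prop:lt_E} within the same framework as Proposition~\ref{prop:rt_E}: via the representation~\eqref{eq:k_diff}, I would express $E(u_1,-t_1;u_2,-t_2)$ as the absolute difference of two Fredholm determinants, isolate the perturbation $\Delta := \tilde K\vert_I - \tilde K^D\vert_I$, and bound it using the pointwise kernel estimates from Sections~\ref{sec:off} and~\ref{sec:boundedness}. Under the hypothesis $u_1 > u_2$, the kernel $K^D(u_1,\cdot;u_2,\cdot) \equiv 0$, so $\tilde K^D\vert_I$ is block upper-triangular on $L^2(I_1) \oplus L^2(I_2)$ with diagonal blocks coinciding with those of $\tilde K\vert_I$. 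Consequently $\Delta$ is supported on the single off-diagonal piece $\tilde K(u_1,\cdot;u_2,\cdot)\vert_{I_1 \times I_2}$, which is precisely the object analyzed in Section~\ref{sec:off}.

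I would then apply the interpolation identity
\[
\det(I - \tilde K\vert_I) - \det(I - \tilde K^D\vert_I) = -\int_0^1 \det(I - \tilde K_s\vert_I)\,\tr\!\bigl((I - \tilde K_s\vert_I)^{-1}\Delta\bigr)\,ds, \qquad \tilde K_s := \tilde K^D + s\Delta,
\]
and bound the integrand factor by factor: $|\det(I - \tilde K_s\vert_I)|$ and the resolvent $(I - \tilde K_s\vert_I)^{-1}$ are controlled using the boundedness estimates of Section~\ref{sec:boundedness}, which describe the diagonal GUE edge kernels restricted slightly into the bulk, while the trace $\tr((I - \tilde K_s\vert_I)^{-1}\Delta)$ is bounded by $\opnorm[(I - \tilde K_s\vert_I)^{-1}] \cdot \|\Delta\|_1$. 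Factoring the off-diagonal kernel $\tilde K(u_1,\cdot;u_2,\cdot)$ through an auxiliary $L^2$ space via the double contour integral representation from Section~\ref{sec:contours} lets me express $\|\Delta\|_1$ as a product of Hilbert-Schmidt norms of single-line operators; invoking Section~\ref{sec:off} on each factor should produce exactly the separating polynomial $u_1^{1/12} u_2^{1/12}/(u_1^{1/2} - u_2^{1/2})$, with the remaining contributions combining into the $e^{C(\log u_1)^{5/6}}$ factor.

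The main obstacle, relative to the right-tail case, is that both Fredholm determinants here represent left-tail probabilities which by~\eqref{eq:ltub} can be as small as $e^{-\Theta(t_i^3)}$, so the smallness of $\det(I - \tilde K^D\vert_I)$ offers no automatic control and the resolvent $(I - \tilde K_s\vert_I)^{-1}$ becomes the delicate object. Controlling it requires a careful spectral analysis of the diagonal GUE edge kernels when the endpoint of $I_i$ lies slightly inside the spectrum, which is precisely what Section~\ref{sec:boundedness} is designed to supply; the admissible range $t_i \leq (\log u_i)^{5/12}$ arises as the regime in which the growth of this resolvent is absorbed by the trace-norm saving from Section~\ref{sec:off} and the final estimate remains bounded by $e^{C(\log u_1)^{5/6}}$. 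The loss, relative to Proposition~\ref{prop:rt_E}, of the tail saving $e^{-\tfrac{2}{3}(t_1^{3/2} + t_2^{3/2})}$ is intrinsic to this setup and is ultimately what forces the suboptimal subsequence $N_k = k^{3+\epsilon}$ in the proof of Theorem~\ref{thm:le}.
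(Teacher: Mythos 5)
Your high-level framework is right — express $E$ via \eqref{eq:k_diff}, note that for $u_1>u_2$ the perturbation $\Delta=\tilde K\vert_I-\tilde K^D\vert_I$ is precisely the single off-diagonal block $\tilde K(u_1,\cdot;u_2,\cdot)$, and feed in the estimates of Sections~\ref{sec:off} and~\ref{sec:boundedness}. But the key technical step you propose, the interpolation identity
\[
\det(I - \tilde K\vert_I) - \det(I - \tilde K^D\vert_I) = -\int_0^1 \det(I - \tilde K_s\vert_I)\,\tr\!\bigl((I - \tilde K_s\vert_I)^{-1}\Delta\bigr)\,ds,
\]
is not what the paper uses, and there is a genuine gap in carrying it out. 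You would need to control $\opnorm[(I-\tilde K_s\vert_I)^{-1}]$. You assert that this is ``precisely what Section~\ref{sec:boundedness} is designed to supply,'' but Section~\ref{sec:boundedness} (Lemmas~\ref{lem:Ke_sup} and~\ref{lem:Ko_sup}) delivers only \emph{pointwise} bounds on the kernel $\tilde K_o$ and $\tilde K_e$, which translate into Hilbert--Schmidt norm bounds, not into any bound on $\opnorm[(I-\tilde K_s\vert_I)^{-1}]$. In the range $t_i\sim(\log u_i)^{5/12}$ the restricted diagonal kernels have many eigenvalues exponentially close to $1$, so the resolvent operator norm blows up; the cancellation in the product $|\det(I-\tilde K_s\vert_I)|\cdot\opnorm[(I-\tilde K_s\vert_I)^{-1}]$ that one would need is a delicate spectral fact that nothing in the paper establishes and that your proposal does not supply.

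The paper avoids this issue altogether. It does not bound the resolvent at all: it uses the perturbation inequality for the $2$-regularized determinant, $|\det_2(I-K)-\det_2(I-L)|\leq\HSN{K-L}\exp\bigl(\tfrac12(1+\HSN{K}+\HSN{L})^2\bigr)$ (see \eqref{eq:det2lip}), which requires only Hilbert--Schmidt norms. Since the off-diagonal block contributes nothing to the trace and the diagonal blocks are positive, the $\det_2$ difference controls the $\det$ difference. The input is then precisely what Sections~\ref{sec:off} and~\ref{sec:boundedness} give: Lemma~\ref{lem:KoHS} bounds $\HSN{\tilde K^D\vert_{I_M}}^2=O((\log u_1)^{5/6})$, Lemma~\ref{lem:KeHS} shows the extra $\tilde K_e$ contribution is negligible, and Lemma~\ref{lem:Ko_pointwise} yields $\HSN{\Delta}=\sqrt{I_{1,2}}\leq\tfrac{u_1^{1/12}u_2^{1/12}}{u_1^{1/2}-u_2^{1/2}}e^{O((\log u_1)^{5/8})}$ via \eqref{eq:KKD}. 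The $e^{C(\log u_1)^{5/6}}$ in the proposition comes from the $\exp(\tfrac12(1+\HSN{K}+\HSN{L})^2)$ factor, and the admissible range $t_i\leq(\log u_i)^{5/12}$ is exactly what keeps $\HSN{\tilde K^D\vert_{I_M}}$ of order $(\log u_1)^{5/12}$ — not, as you suggest, the regime in which a resolvent growth is absorbed. You also omit the truncation to $I_M$ (capping the $\yM$-interval at $(\log u_1)^{100}$ and invoking Lemma~\ref{lem:levuppertail} to control the truncation error via \eqref{eq:EvsEM}); this is needed because the pointwise bounds from Lemma~\ref{lem:Ko_sup} decay only polynomially, so without the truncation the Hilbert--Schmidt integrals of the diagonal blocks are not finite from those estimates alone.
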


\subsection{Correlation estimate}
\label{sec-cor}
We also show correlation estimates for $\lev[u_i]$ when $u_1^{1/3} \ll u_2 - u_1 \ll u_1^{2/3}.$  It turns out not to be necessary to show an estimate for smaller values of $u_2 - u_1,$ as for those values we can use eigenvalue interlacing. 

Define
\begin{align*}
  F(u_1,t_1 ; u_2, t_2)
  &=
  \Pr\left[ \lev[u_1] \geq t_1 \text{ and } \lev[u_2] < t_2 \right] \\
  &=
  \left|
  \Pr\left[ \lev[u_1] < t_1 \text{ and } \lev[u_2] < t_2 \right]
  -
  \Pr\left[ \lev[u_2] < t_2\right]
  \right|.
\end{align*}
We seek to show that this is much smaller in order than 
$\Pr\left[ \lev[u_1] > t_1\right].$  
See Section~\ref{sec:corproof} for an overview of the approach.

Our main correlation result is:
\begin{proposition}
	\label{prop:rtcorrelation}
	 In what follows, we let $\Delta u = u_2 - u_1$ and use $u=u_1.$ 
  For any $0<\beta < \delta < \tfrac16$ and $\epsilon >0$ there is a $C>0$ sufficiently large so that 
  \begin{enumerate}
	  \item for all $u_1,u_2 \in \mathbb{N}$ with $u^{1/3+\delta} \leq \Delta u \leq u^{2/3-\delta},$ and
	  \item for all $t_1,t_2 \in \R$ and $0 \leq \Delta t \leq 1$ with 
		  \[
			  \epsilon (\log u)^{2/3} \leq t_2 \leq t_2 + \Delta t \leq t_1 \leq \frac{1}{\epsilon} (\log u)^{2/3},
		  \]
  \end{enumerate}
  we have that
  \[
F(u_1,t_1 ; u_2, t_2)
\leq
C
\left[\frac{(\Delta u)}{u^{2/3-\beta}}
  +
  \Pr\left[ Z > \Delta t u^{1/3}/\sqrt{\Delta u} \right]
\right]
e^{-\tfrac 23( (t_1)^{3/2} + t_2^{3/2})},
  \]
  where $Z$ is a standard normal variable.
\end{proposition}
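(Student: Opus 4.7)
The plan is to combine a Schur-complement reduction of the Fredholm determinant for $F$ with the contour integral / Airy-type kernel asymptotics developed in Sections~\ref{sec:contours} and~\ref{sec:sharp}, together with eigenvalue interlacing, to extract the two contributions on the right-hand side of the proposition.

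\emph{Step 1 (Fredholm reformulation).} Write
\[
 F(u_1,t_1;u_2,t_2) = \det(I - K|_{B_2}) - \det(I - K|_{B_1 \cup B_2}),
\]
where $B_i = \{u_i\}\times[s_i,\infty)$ and $s_i = \sqrt{2u_i} + u_i^{-1/6}t_i/\sqrt{2}$. Decomposing $K|_{B_1\cup B_2}$ as a $2\times 2$ block operator $[K_{ij}]_{i,j=1,2}$ and applying the Schur complement gives
\[
 F = \det(I - K_{22})\bigl[1 - \det(I - M)\bigr], \qquad M := K_{11} + K_{12}(I - K_{22})^{-1} K_{21}.
\]
Since $\det(I - K_{22}) \leq 1$ and $|1-\det(I-A)| \leq \|A\|_1 \exp(\|A\|_1)$, the task reduces to estimating $\|M\|_1$.

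\emph{Step 2 (Interlacing and kernel asymptotics).} Cauchy interlacing of the GUE minors $G_{u_1} \subset G_{u_2}$ yields $\levraw[u_2] \geq \levraw[u_1]$, which in the scaled coordinates $x_i = \sqrt{2}u_i^{1/6}(y_i - \sqrt{2u_i})$ translates to $\lev[u_2] \geq \lev[u_1] - (1+o(1))\Delta u/u^{1/3}$. In particular $F \equiv 0$ whenever $\Delta t > (1+o(1))\Delta u/u^{1/3}$, so one may restrict to $\Delta t \leq \Delta u/u^{1/3}$. Next, in the regime $u^{1/3+\delta} \leq \Delta u \leq u^{2/3-\delta}$, saddle-point analysis of the double contour representation of $K(u_1,y_1;u_2,y_2)$ from Section~\ref{sec:contours} should produce an ``Airy times Gaussian'' factorization of the cross-kernel in which the Gaussian propagator has the form
\[
 \exp\!\Bigl(-\tfrac{u^{2/3}}{2\Delta u}(x_1 - x_2)^2\Bigr),
\]
possibly with a subleading drift. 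This is the ``small time'' limit of the extended Airy kernel, with variance $\sim \Delta u/u^{2/3}$ in the scaled eigenvalue difference, reflecting the near-Brownian behavior of the top eigenvalue as $u$ varies between $u_1$ and $u_2$.

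\emph{Step 3 (Two-term bound).} Bound $\|M\|_1$ via two complementary estimates of the cross-term $K_{12}(I-K_{22})^{-1}K_{21}$. Bounding the Gaussian factor by $1$ and computing a coarse Hilbert--Schmidt bound produces, after absorbing $(\log u)^{O(1)}$ factors arising from $t_i \in [\epsilon(\log u)^{2/3}, \epsilon^{-1}(\log u)^{2/3}]$ into a small polynomial loss $u^\beta$ (with $\beta<\delta$), the first term $(\Delta u/u^{2/3-\beta})\,e^{-\tfrac{2}{3}(t_1^{3/2}+t_2^{3/2})}$. Retaining the Gaussian factor instead, and using the constraint $x_1 - x_2 \geq \Delta t$ imposed by the events $\{\lev[u_1] \geq t_1\}$ and $\{\lev[u_2] < t_2\}$, one integrates the Gaussian over a half-space and obtains the second term, $\Pr[Z > \Delta t u^{1/3}/\sqrt{\Delta u}]\,e^{-\tfrac{2}{3}(t_1^{3/2}+t_2^{3/2})}$. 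Summing and applying $|1-\det(I-M)| \leq \|M\|_1 e^{\|M\|_1}$ yields the stated estimate on $F$.

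\emph{Principal obstacle.} The core technical challenge is the uniform ``Airy times Gaussian'' factorization of the cross-kernel in Step~2, simultaneously over $\Delta u \in [u^{1/3+\delta}, u^{2/3-\delta}]$ and over $t_i$ of polylogarithmic size. One must deform the two contours of the double integral to coupled saddle points whose relative offset encodes the interlacing drift of order $\Delta u/u^{1/3}$, while preserving the full Airy exponential decay in each variable and extracting the correct Gaussian width. At the upper end $\Delta u \sim u^{2/3-\delta}$ the Gaussian width $\sqrt{\Delta u}/u^{1/3}$ approaches the Airy length scale up to only a polynomial factor, which is the source of the polynomial loss $u^{\beta}$ in the first term of the bound.
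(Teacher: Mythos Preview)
Your Schur-complement reduction in Step~1 is correct in form, but the plan to bound $\|M\|_1$ termwise fails at the very first term. With $M = K_{11} + K_{12}(I-K_{22})^{-1}K_{21}$, the diagonal block $K_{11} = \Proj{1}{\tilde K}(u_1,\cdot;u_1,\cdot)\Proj{1}$ has nuclear norm of order $e^{-\tfrac 43 t_1^{3/2}}$ (Lemma~\ref{lem:quant_Airy}). Since $t_1^{3/2}-t_2^{3/2} = O(\sqrt{t_1}\,\Delta t) = O((\log u)^{1/3})$, this gives only $\|K_{11}\|_1 \gtrsim e^{-C(\log u)^{1/3}}\,e^{-\tfrac 23(t_1^{3/2}+t_2^{3/2})}$, which is far larger than the target prefactor $\Delta u/u^{2/3-\beta}$ when $\Delta u$ is near $u^{1/3+\delta}$. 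No estimate on the cross term alone can repair this; you need cancellation against $K_{11}$, not an additive bound.

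The paper obtains that cancellation by a row operation \emph{before} taking the Schur complement: subtract $\Proj{1}\phiop$ times the second row from the first, where $\phiop$ is the operator with kernel ${\tilde \phi}$, the rescaled $\phi^{(u_1,u_2)}$. This replaces $K_{11}$ by $\Dop_1 = \Proj{1}\bigl(\phiop\Proj{2}{\tilde K}(u_2,\cdot;u_1,\cdot) - {\tilde K}(u_1,\cdot;u_1,\cdot)\bigr)\Proj{1}$. The crucial input is Lemma~\ref{lem:phi_identity}: $\phiop$ is an approximate identity on $H^2$, with error $O(\Delta u/u^{2/3-\beta})$. Combined with the Airy approximation (Lemma~\ref{lem:Airy_difference}) for both ${\tilde K}$'s, this makes $\|\Dop_1\|_\nu$ small of the right order. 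The Gaussian you wrote down, $\exp\bigl(-\tfrac{u^{2/3}}{2\Delta u}(\yM[1]-\yM[2])^2\bigr)$, is exactly the pointwise profile of ${\tilde\phi}$ itself (Lemma~\ref{lem:phi_identity}(ii)), not a factor in an ``Airy times Gaussian'' decomposition of a cross-kernel. The tail $\Pr[Z>\Delta t\,u^{1/3}/\sqrt{\Delta u}]$ enters as the operator norm $\|\Proj{1}\phiop\Proj{2}'\|$ where $\Proj{2}'$ restricts to $[t_2-1,t_2]$: it measures the leakage of $\phiop$ across the gap $[t_2,t_1]$, which is the piece of $\Dop_1$ not covered by the approximate-identity estimate.

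Two smaller remarks. Your interlacing observation is vacuous in this regime: since $\Delta u \geq u^{1/3+\delta}$, one has $\Delta u/u^{1/3} \geq u^{\delta} \gg 1 \geq \Delta t$, so the constraint $\Delta t \leq \Delta u/u^{1/3}$ is automatic and yields nothing. And the cross-block in \eqref{eq:blocks} is ${\tilde\phi} - {\tilde K}$, a \emph{difference}, not a product; the ``Airy times Gaussian'' picture from the extended Airy process is morally related but is not the mechanism here.
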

\noindent The proof is given in Section \ref{sec:corproof}.
\section{Proof of the upper limit, Theorem \ref{thm:ue}}
\label{sec:ue}

%In this section we prove Theorems \ref{thm:ue} correlation estimates.
\begin{proof}[Proof of Theorem~\ref{thm:ue}]
	Fix $\alpha > 3,$ and
	define $N_k = \lceil k^{\alpha} \rceil$ for all $k \in \mathbb{N}.$ 
	Let $c_* = \left(\frac{1}{4}\right)^{2/3},$ and for some fixed $c < c_*$ define
  \[
  \mathcal{E}_k =  \left\{ \lev[N_k] \geq c (\log N_k)^{2/3} \right\}.
  \]
  Define $S_N = \sum_{k=1}^N \one[ \mathcal{E}_k ].$  
  We will show that $S_N \to \infty$ in probability for sufficiently small $c$ by a second moment calculation, from which it follows that infinitely many $\mathcal{E}_k$ occur almost surely.  Further, we will show that by making $\alpha$ close to $3,$ we can take $c$ close to $c_*.$  Hence we will have shown that
  \[
	  \limsup_{N \to \infty} \frac{\lev[N]}{(\log N)^{2/3}} \geq c_*.
  \]
  
  %From \cite{LedouxRider}
  %\eqref{eq:ltlb}, 
  %we have so that
  By \eqref{eq-levuppertail} (proven in Lemma~\ref{lem:levuppertail}),
  \[
    \Pr \left( \mathcal{E}_k \right) 
    = \Omega\left( N_k^{-\tfrac 43 c^{3/2}} \right)
    = \Omega\left( k^{- \alpha \tfrac43 c^{3/2}} \right).
  \]
  Letting $\beta = 4c^{3/2},$ which we observe has $\beta < 1,$ 
  \begin{equation}
	  \Exp S_N = \Omega\left( N^{1 - \alpha \tfrac{\beta}{3} } \right).
    \label{eq:rt_expSN}
  \end{equation}

  As for the variance, we have that
  \begin{align*}
    \Var( S_N ) 
    &=\Exp S_N^2-(\Exp S_N)^2\\
    & \leq
     \Exp S_N + 2\sum_{k=1}^N \sum_{\ell > k}^N
    \left[
    \Pr \left( \mathcal{E}_k \cap \mathcal{E}_\ell \right) 
    -
    \Pr \left( \mathcal{E}_k \right) 
    \Pr \left( \mathcal{E}_\ell \right) 
  \right] \\
    &= \Exp S_N + 2\sum_{k=1}^N \sum_{\ell > k}^N
    E(N_k, c(\log N_k)^{2/3}; N_\ell, c( \log N_\ell)^{2/3}).
    \label{eq:rt_var1}
  \end{align*}
  As $\alpha > 3,$ we may 
  apply Proposition \ref{prop:rt_E} to get that for any $\delta>0$
  \begin{align*}
    \sum_{\ell > k}^N
    E(N_k, c(\log N_k)^{2/3}; N_\ell, c( \log N_\ell)^{2/3})
    &=
    \sum_{\ell > k}^N
    O\left(
    \frac{\ell^{\alpha/12-\beta/2}k^{\alpha/12-\beta/2}}{\ell^{\alpha/2} - k^{\alpha/2}} N^{\delta}
    \right).
  \end{align*}
  Divide this sum into those terms $\ell < 2k$ and those terms $\ell \geq 2k.$  For terms less than $2k,$ use that $\ell^{\alpha/2} \geq k^{\alpha/2} + (\alpha/2 -1)(\ell-k)k^{\alpha/2-1}.$  For terms $\ell \geq 2k,$ just use that $\ell^{\alpha/2} - k^{\alpha/2} = \Omega( \ell^{\alpha/2}).$  Hence we have that
 \begin{align*}
    \sum_{\ell > k}^N
    \frac{\ell^{\alpha/12-\beta/2}k^{\alpha/12-\beta/2}}{\ell^{\alpha/2} - k^{\alpha/2}}
    &\leq
    \sum_{\ell > k}^{2k}
    \frac{\ell^{\alpha/12-\beta/2}k^{\alpha/12-\beta/2}}{(\alpha/2-1)(\ell-k)}
    +\sum_{\ell > 2k}^{N}
    O\left(\frac{\ell^{\alpha/12-\beta/2}k^{1-5\alpha/12-\beta/2}}{\ell^{\alpha/2}}\right) \\
    &=O\left(k^{1-\alpha/3-\beta} \log k\right).
  \end{align*}
  Hence, applying this to the variance, we have
  \begin{align*}
    \Var( S_N ) 
    &\leq 
    \Exp S_N + O\left(N^{2-\alpha/3 -\beta+ 2\delta}\right).
  \end{align*}
  As we may shrink $\delta$ to be as small as desired, it suffices to have $2-\alpha/3 - \beta < 2 - 2\beta \alpha/3$ in order to have $\Var(S_N) = o((\Exp S_N)^2).$ Hence provided that $4c^{3/2} = \beta < \frac{\alpha}{2\alpha -3},$ we have that $\mathcal{E}_k$ occur infinitely often.  As we may take $\alpha$ arbitrarily close to $3,$ we may make $c$ as close to $c_*$ as desired.

  We now turn to showing that 
  \[
	  \limsup_{N \to \infty} \frac{\lev[N]}{(\log N)^{2/3}} \leq c_*,
  \]
  Fix $\alpha < 3$ and define $N_k=\lceil k^{\alpha} \rceil.$  Fix $\delta>0$ to be chosen later, and define 
  \(
  \mathcal{N}_k = \left\{ N_k - j\lceil N_k^{1/3}\rceil~:~0\leq j \leq N_k^\delta \right\}.
  \)
  Fix $c > c_*$ and define  
  \[
  \mathcal{E}_k = \left\{ \exists~j \in \mathcal{N}_k~:~ \lev[j] \geq c(\log j)^{2/3} \right\}.
  \]
  Then from \eqref{eq-levuppertail} (proved in
  Lemma~\ref{lem:levuppertail}),
  we have
  \[
    \Pr \left( \mathcal{E}_k \right) 
    = O\left( N_k^{-\tfrac 43 c^{3/2}+\delta} \right)
    = O\left( k^{- \alpha (\tfrac43 c^{3/2}-\delta)} \right).
  \]
  We thus see that
  for any choice of $c > c_*$ we can choose $\alpha$ sufficiently close to $3$ and $\delta$ sufficiently close to $0$ that this is summable in $k.$  Hence by Borel-Cantelli, only finitely many $\mathcal{E}_k$ occur almost surely.  
  
  As we wish to bound the $\limsup$ from above, we need to control of $\lev[n]$ for all $\mathbb{N}$.  We do this by first extending control to a denser net of $\mathbb{N}$ using Proposition~\ref{prop:rtcorrelation}.  Having done so, 
  we will have a sufficiently dense 
  net
  that we can apply eigenvalue interlacing to conclude 
  the upper bound
  for the full sequence.  
  
  Define 
  \(
  \mathcal{A}_k = \left\{ 
    N_k - j\lceil N_k^{1/3}\rceil - \ell \lceil N_k^{1/3+\delta}\rceil ~:~
    0\leq j \leq N_k^\delta, 
    0 \leq \ell \leq N_k^{1/3-2\delta}
  \right\}
  \)
  and define
  \(
  \mathcal{A} = \cup_{k=1}^\infty \mathcal{A}_k.
  \)
  We claim that for $\delta$ sufficiently small, $\mathcal{A}$ has the property that for all $n \in \mathbb{N}$ larger than some $n_0,$ there is a $j \in \mathcal{A}$ so that $j \geq n \geq j - 2n^{1/3}.$  On the one hand, the spacing between consecutive elements of $\mathcal{A}_k$ is never more than $\lceil N_k^{1/3}\rceil.$  On the other hand, 
  \[
    \min \mathcal{A}_k = N_k - N_k^{2/3-\delta} + O(N_k^{1/3-\delta}).
  \]
  Hence, by making $\delta$ sufficiently small, we have that $N_{k-1} \geq \min \mathcal{A}_k$ for all $k$ large.  Thus for all $n$ with $N_{k-1} < n \leq N_k$ for $k$ sufficiently large, we have shown that there is a $j \in \mathcal{A}_k$ so that $j \geq n \geq j -\lceil N_k^{1/3}\rceil.$  Since $N_{k}/N_{k-1}\to 1,$ we may bound $\lceil N_k^{1/3}\rceil \leq 2n^{1/3}$ for all $k$ sufficiently large.

  We will eventually show that for $\delta$ sufficiently small, there are almost surely only finitely many $j \in \mathcal{A}$ so that 
  $\lev[j] > (c+\delta)(\log {j})^{2/3}.$  First, we will show how this implies there are only finitely many $n \in \mathbb{N}$ so that $\lev[n] > 
  (c+2\delta)(\log {n})^{2/3}.$  Using the property shown above for $\mathcal{A}$, we have that for any $n > n_0$ random, there is a $j \in \mathcal{A}$ with $j \geq n \geq j - 2n^{1/3}$ having $\lev[j] \leq (c+\delta)(\log j)^{2/3}.$  Recall that the unscaled eigenvalues satisfy $\levraw[n] \leq \levraw[j],$ and hence
  \begin{align*}
    \lev[n]
    &\leq \left( \sqrt{2j} - \sqrt{2n} \right)(\sqrt{2})n^{1/6}
    + (c+\delta)(\log j)^{2/3}\left( \frac{n}{j} \right)^{1/6} \\
    &\leq \left( j - n \right)\frac{n^{1/6}}{\sqrt{2}n^{1/2}}
    + (c+\delta)(\log(n + (j-n)) )^{2/3}\left( \frac{n}{j} \right)^{1/6} \\
    &\leq \sqrt{2}
    + (c+\delta)(\log(n) + 2n^{-2/3} )^{2/3}\\
    &\leq (c+2\delta)(\log n)^{2/3},
  \end{align*}
  for all $n$ sufficiently large.  Thus if we show that almost surely only finitely many $j \in \mathcal{A},$ then we conclude that almost surely
  \[
    \limsup_{n\to\infty} \frac{\lev[n]}{(\log n)^{2/3}} \leq c+2\delta.
  \]
  As we may make $c$ as close to $c_*$ and $\delta$ as close to $0$ as we wish, this will complete the proof.

  As for the claim about $\mathcal{A},$ we define for any $k \in \mathbb{N},$ any $j \in \mathcal{N}_k$ the set of numbers
  \[
	  \mathcal{U}_{k,j} =
	  \left\{ 
	  j-\ell\lceil N_k^{1/3+\delta}\rceil~:~ 
	    0 \leq \ell \leq \lfloor N_k^{1/3-2\delta}\rfloor
	  \right\},
  \]
  and the event
  \[
    \mathcal{E}_{k,j} = \left\{ 
	    \exists\ n \in \mathcal{U}_{k,j}
	    ~:~
	    \lev[n] > (c+\delta)(\log n)^{2/3}
      \text{ and }
      \lev[j] \leq c(\log j)^{2/3}
  \right\}.
  \]
  We begin by estimating $\Pr\left( \mathcal{E}_{k,j} \right).$  To this end, we will do a dyadic decomposition of $\mathcal{U}_{k,j}$.  Let $u_* = \min \mathcal{U}_{k,j}$ and $u^* = \max \mathcal{U}_{k,j}.$   Define $n_{0,\ell} = u_*$
  and define $n_{2^\ell,\ell} = u^*$
  for all integers $\ell \geq 0.$  Now define, inductively on $\ell$:
  \begin{itemize}
    \item 
      For all $0 \leq i \leq 2^{\ell-1},$ define $n_{2i,\ell} = n_{i,\ell-1}.$
    \item
      For all $0 \leq i < 2^{\ell-1},$ define
      $n_{2i+1,\ell}$ as a median of $\mathcal{U}_{k,j} \cap (n_{i,\ell-1}, n_{i+1,\ell-1}),$ if one exists or $n_{2i+2,\ell}$ otherwise.
  \end{itemize}
  As $|\mathcal{U}_{k,\ell}| \leq N_k^{1/3}$ for all $k$ large, there is a $C>0$ so that for all $\ell > C\log N_k$ and all $0 \leq i < 2^{\ell-1},$ $n_{2i+1,\ell} = n_{2i+2,\ell}$  In particular, we have that
  \[
	  \mathcal{U}_{k,j} \subseteq \left\{ 
		  n_{2i+1,\ell}~:~ 1 \leq \ell \leq C\log N_k, 0\leq i \leq 2^{\ell-1}
	  \right\}.
  \]
  Set 
    $\bar \beta = \tfrac 43 c^{3/2} > \tfrac{1}{3}.$ 
  Set $t_0 = c(\log j)^{2/3}$ and define $t_\ell = t_0 + \frac{\ell}{C\log N_k}$for all $\ell > 0.$  Then we have the estimate for all $k$ sufficiently large that
  \begin{align*}
	  \Pr\left( \mathcal{E}_{k,j} \right)
	  &\leq 
	  \sum_{\ell=1}^{\lfloor C\log N_k \rfloor}
	  \sum_{i=0}^{2^{\ell-1}}
	  \Pr\left( 
	  \lev[{n_{2i+1,\ell}}] > t_\ell
      \text{ and }
      \lev[{n_{2i+2,\ell}}] \leq t_{\ell-1}
	  \right). \\
	  \intertext{Applying Proposition~\ref{prop:rtcorrelation} 
	  with $\beta=\delta/2$,}
	  &\leq 
	  \sum_{\ell=1}^{\lfloor C\log N_k \rfloor}
	  \sum_{i=0}^{2^{\ell-1}}
	  O\left(\frac{ {n_{2i+1,\ell}} -{n_{2i+2,\ell}}}
	  { N_k^{2/3-{2.5}\delta}  } N_k^{-{\bar
	  \beta}} \right)\\
 &\leq 
	  \sum_{\ell=1}^{\lfloor C\log N_k \rfloor}
	  O(N_k^{{2\delta-\bar\beta}}) \\
 &\leq 
 O(N_k^{{2\delta-\bar\beta}}\log N_k  ).
  \end{align*}

    Hence, summing over $j \in \mathcal{N}_k$ we have
  \begin{align*}
    \sum_{j \in \mathcal{N}_k} 
    \Pr\left(  \mathcal{E}_{k,j}\right)
    &=
    O\left( N_k^{{3\delta-\bar\beta}}\right).
  \end{align*}
Hence, for $\delta$ sufficiently small, this is summable 
in $k$, and the proof is complete.
  \end{proof}

\section{Proof of the lower limit, Theorem \ref{thm:le}}
\label{sec:le}
This proof is nearly identical to the previous one, but with some small numerical changes to account for the differences in Proposition~\ref{prop:rt_E} and \ref{prop:lt_E}.
\begin{proof}
	By 
	the Borel-Cantelli lemma, 
	the existence of $c_1$ follows from \eqref{eq:ltub}.  The proof is therefore devoted to showing the existence of $c_2.$  This proof is nearly identical to part of the proof of Theorem~\ref{thm:ue}.  Let $\alpha > 6$ be fixed, and define $N_k = \lceil k^{\alpha} \rceil$ for all $k \in \mathbb{N}.$  For some $c_2 >0$ to be determined, define the event
  \[
  \mathcal{E}_k =  \left\{ \lev[N_k] \leq -c_2 (\log N_k)^{1/3} \right\}.
  \]
  Define $S_N = \sum_{k=1}^N \one[ \mathcal{E}_k ].$  
  We will show that $S_N \to \infty$ in probability for sufficiently small $c_2$ by a second moment calculation.
  
  %From {\color{red}{lefttailref}}
  From \cite[Theorem 4]{LedouxRider}
  %\eqref{eq:ltlb}, 
  there is some $\beta > 0$ so that
  \[
    \Pr \left( \mathcal{E}_k \right) 
    = \Omega\left( N_k^{-\beta c_2^3} \right)
    = \Omega\left( k^{-\alpha \beta c_2^3} \right).
  \]
  Hence we have that for $c_2$ so that $\alpha \beta c_2^3 < 1,$ 
  \begin{equation}
    \Exp S_N = \Omega\left( N^{1 -\alpha \beta c_2^3} \right).
    \label{eq:expSN}
  \end{equation}

  As for the variance, we have that
  \begin{align*}
    \Var( S_N ) 
    &\leq \Exp S_N + 2\sum_{k=1}^N \sum_{\ell > k}^N
    \left[
    \Pr \left( \mathcal{E}_k \cap \mathcal{E}_\ell \right) 
    -
    \Pr \left( \mathcal{E}_k \right) 
    \Pr \left( \mathcal{E}_\ell \right) 
  \right] \\
    &= \Exp S_N + 2\sum_{k=1}^N \sum_{\ell > k}^N
    E(N_k, -c_2(\log N_k)^{1/3}; N_\ell, -c_2( \log N_\ell)^{1/3}).
    \label{eq:var1}
  \end{align*}
  Applying Proposition \ref{prop:lt_E}, we have that for any $\delta>0$
  \begin{align*}
    \sum_{\ell > k}^N
    E(N_k, -c_2(\log N_k)^{1/3}; N_\ell, -c_2( \log N_\ell)^{1/3})
    &=
    \sum_{\ell > k}^N
    O\left(
    \frac{\ell^{\alpha/12}k^{\alpha/12}}{\ell^{\alpha/2} - k^{\alpha/2}} N^{\delta}
    \right).
  \end{align*}
  Divide this sum into those terms $\ell < 2k$ and those terms $\ell \geq 2k.$  For terms less than $2k,$ use that $\ell^{\alpha/2} \geq k^{\alpha/2} + (\alpha/2 -1)(\ell-k)k^{\alpha/2-1}.$  For terms $\ell \geq 2k,$ just use that $\ell^{\alpha/2} - k^{\alpha/2} = \Omega( \ell^{\alpha/2}).$  Hence we have that
 \begin{align*}
    \sum_{\ell > k}^N
    \frac{\ell^{\alpha/12}k^{\alpha/12}}{\ell^{\alpha/2} - k^{\alpha/2}}
    &\leq
    \sum_{\ell > k}^{2k}
    \frac{\ell^{\alpha/12}k^{1-5\alpha/12}}{(\alpha/2-1)(\ell-k)}
    +\sum_{\ell > 2k}^{N}
    O\left(\frac{\ell^{\alpha/12}k^{\alpha/12}}{\ell^{\alpha/2}}\right) \\
    &=O\left(k^{1-\alpha/3} \log k\right).
  \end{align*}
  Hence, applying this to the variance, we have
  \begin{align*}
    \Var( S_N ) 
    &\leq
    \Exp S_N + O\left(N^{2-\alpha/3 + 2\delta}\right).
  \end{align*}
  As we may shrink $\delta$ to be as small as desired, it suffices to have $2-\alpha/3 < 2 - 2\alpha\beta c_2^3$ in order to have $\Var(S_N) = o((\Exp S_N)^2).$  This requires that $\beta c_2^3 < 1.$  Conversely, letting $c_2$ be any positive number satisfying $ \alpha \beta c_2^3 < 1,$ we have that $S_N \to \infty$ in probability.
\end{proof}

\section{Contour integral representations for the kernel}
\label{sec:contours}
	%, and proof   of the decorrelation estimate Proposition \ref{prop:lt_E}.}
%We will give a contour integral representation for $\odK.$  
We begin with the following identity for $\Psi_j(x).$  
\begin{lemma}
  \label{lem:Psi}
  For all integer $j,$
\begin{equation*}
  \Psi_j(x) = \frac{2^j}{\sqrt{\pi} i} \int_\ell s^j e^{s^2-2xs}\,ds.
\end{equation*}
The contour $\ell$ is any vertical line in the complex plane, travelled in the direction of increasing imaginary part, whose real part is positive.
\end{lemma}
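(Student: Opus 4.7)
The plan is to prove the identity separately for $j \geq 0$ and $j < 0$, since the definition of $\Psi_j$ bifurcates at that point. On any vertical line $\ell = \{a+it : t \in \R\}$ with $a>0$ the exponential factor satisfies $|e^{s^2 - 2xs}| = e^{a^2 - 2xa - t^2}$, which is Gaussian in $t$ and dominates the factor $|s|^j$, so the integral is absolutely convergent in either case. For $j<0$ the pole of $s^j$ at the origin is avoided by $a>0$, while for $j\geq 0$ the integrand is entire and the contour may be deformed freely.

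For $j \geq 0$, I would complete the square, $s^2 - 2xs = (s-x)^2 - x^2$, deform $\ell$ to the vertical line through $x$ by Cauchy's theorem, and parametrize $s = x + it$. The result is
\[
\frac{2^j}{\sqrt{\pi}\,i}\int_\ell s^j e^{s^2 - 2xs}\,ds = \frac{2^j}{\sqrt{\pi}} e^{-x^2}\int_{-\infty}^\infty (x+it)^j e^{-t^2}\,dt.
\]
The final integral is the classical Fourier-type representation of $H_j(x)$; one verifies $H_j(x) = \tfrac{2^j}{\sqrt{\pi}}\int_{-\infty}^\infty (x+it)^j e^{-t^2}\,dt$ immediately by expanding $(x+it)^j$ binomially and computing the Gaussian moments $\int t^{2m} e^{-t^2}\,dt = \sqrt{\pi}(2m)!/(4^m m!)$, which recovers the explicit formula $H_j(x) = j!\sum_m (-1)^m (2x)^{j-2m}/(m!(j-2m)!)$. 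Hence the expression equals $e^{-x^2}H_j(x) = \Psi_j(x)$.

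For $j = -k-1$ with $k \geq 0$, I would use the identity $s^{-k-1} = \tfrac{1}{k!}\int_0^\infty u^k e^{-us}\,du$, valid for $\operatorname{Re} s > 0$, substitute it into the contour integral, and exchange the order of integration by Fubini. Absolute convergence is guaranteed by the bound $|u^k e^{-us}e^{s^2 - 2xs}| = u^k e^{-au + a^2 - 2xa - t^2}$, which is integrable on $[0,\infty)\times\R$. The inner contour integral $\int_\ell e^{s^2 - (2x+u)s}\,ds$ is a Gaussian; completing the square $s^2 - (2x+u)s = (s - (2x+u)/2)^2 - (2x+u)^2/4$ and deforming the contour to the vertical line through $(2x+u)/2$ (legitimate since that integrand is entire) yields $i\sqrt{\pi}\,e^{-(2x+u)^2/4}$. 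Substituting $y = x + u/2$ in the outer integral then produces $\tfrac{2^{k+1}}{k!}\,i\sqrt{\pi}\int_x^\infty (y-x)^k e^{-y^2}\,dy$, and after multiplication by the prefactor $\tfrac{2^{-k-1}}{\sqrt{\pi}\,i}$ this becomes $\tfrac{1}{k!}\int_x^\infty (y-x)^k e^{-y^2}\,dy = \Psi_{-k-1}(x)$.

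The main care is required in the $j<0$ case, where one must justify the Fubini exchange and the inner contour deformation; both follow routinely from the uniform Gaussian decay of the integrand in $|\operatorname{Im} s|$ on any strip of positive real part, so that, once they are in place, the remainder of the computation is straightforward algebraic bookkeeping.
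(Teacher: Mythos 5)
Your proof is correct. The paper disposes of this lemma by citation alone: for $j \geq 0$ it declares the formula ``standard,'' and for $j < 0$ it refers to equation (2) of the Johansson--Nordenstam erratum. You instead give a self-contained computation, which is a genuinely more explicit route.

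For $j \geq 0$ your completion of the square, the shift of the contour to $\Re s = x$ (valid because the integrand is entire and decays like $e^{-t^2}$ in the imaginary direction, so the horizontal segments at $\pm iT$ vanish), and the binomial-plus-Gaussian-moments identification of $\tfrac{2^j}{\sqrt\pi}\int(x+it)^j e^{-t^2}\,dt$ with $H_j(x)$ is the textbook derivation, and the algebra checks out against the explicit series for the physicists' Hermite polynomials. For $j = -k-1 < 0$, the key move is the Gamma-integral representation $s^{-k-1} = \tfrac{1}{k!}\int_0^\infty u^k e^{-us}\,du$ (valid since $\Re s = a > 0$ on $\ell$), followed by Fubini (justified by the $u^k e^{-au - t^2}$ bound), a Gaussian contour integral in $s$, and the substitution $y = x + u/2$, which reproduces $\tfrac{1}{k!}\int_x^\infty (y-x)^k e^{-y^2}\,dy = \Psi_{-k-1}(x)$ exactly; I have verified the constants $\tfrac{2^{-k-1}}{\sqrt\pi\,i}\cdot\tfrac{2^{k+1}}{k!}\,i\sqrt\pi = \tfrac{1}{k!}$ cancel as claimed. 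What your approach buys is a proof that is verifiable without chasing references and that makes visible why the same contour formula glues the polynomial case and the incomplete-Gamma-type case together: the $j < 0$ case is literally the Laplace transform of the $j = 0$ Gaussian. What it costs is length; the paper's choice to cite is reasonable for a result this classical, but your version would be the right thing to write if the lemma needed to be proved in full.
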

\begin{proof}
  In the case that $j \geq 0,$ this formula is standard.  The case $j < 0$ follows from (2) of \cite{JNerratum}.
\end{proof}

As for $\Phi_j(x)$, we can represent a Hermite polynomial as
\begin{equation*}
%  \label{eq:hermite1}
  H_j(x) = \frac{j!}{2\pi i} \oint \frac{e^{-z^2 + 2xz}}{z^j} \frac{dz}{z},
\end{equation*}
where the contour is any that winds once around $0.$  Thus, we have the representation
\begin{equation}
  \label{eq:Phi}
  \Phi_j(x) = \frac{2^{-j}}{2\pi^{3/2} i} \oint \frac{e^{-z^2 + 2xz}}{z^j} \frac{dz}{z}.
\end{equation}

Expand \eqref{eq:kernel} by replacing $\Psi_j(x)$ and $\Phi_j(x)$ with Lemma \eqref{lem:Psi} and \eqref{eq:Phi}.  This gives the representation
%allows the kernel to be expressed as
\begin{equation*}
        %\odK= 
        \phi+K=
        \frac{2^{u_1-u_2}}{2(\pi i)^2}
        \sum_{k=1}^{u_2}
        \oint\limits_{z_2}\int\limits_{z_1}
        \frac{e^{z_1^2 - 2z_1y_1}}{e^{z_2^2 - 2z_2y_2}}
        \left( \frac{z_2}{z_1} \right)^k
        \frac{z_1^{u_1}}{z_2^{u_2}}
        \frac{dz_1dz_2}{z_2}.
\end{equation*}
Taking the contours so that the 
$z_1$ and $z_2$ contours do not intersect,
and evaluating the contour integral over $z_2$ first, we see, using the analyticity of the integrand,
that 
the last expression is not changed if the sum is extended to $\infty.$ 
Taking the contours so that $|z_1| > |z_2|,$ the series is uniformly convergent.  As this is a geometric series, we arrive at the equation
\begin{equation}
  \label{eq:phi+K}
%       \odK = 
        \phi + K=
        \frac{2^{u_1-u_2}}{2(\pi i)^2}
        \oint\limits_{z_2}\int\limits_{z_1}
        \frac{e^{z_1^2 - 2z_1y_1}}{e^{z_2^2 - 2z_2y_2}}
        \frac{z_1^{u_1}}{z_2^{u_2}}
        \frac{dz_1dz_2}{z_1 - z_2}.
\end{equation}
The $z_2$ integral is taken over a closed loop that winds once around $0,$ and the $z_1$ integral is taken over a vertical line with real part larger than any part of the $z_2$ contour.

\subsection{ Contour deformation }

At this point, we will deform the contours
to be $\yM[i]$-independent, approximate minimum phase contours (see Figure~\ref{fig:contours_1geq2}).  We will use these contours, or slight deformations of them, for most of our estimates.  For $\yM[i]$ positive, we will also use more exact $\yM[i]$-dependent approximate minimum phase contours in Section~\ref{sec:sharp}.

Fix parameters $\delta_1 > 0$ and $\delta_2 > 0$ to be determined later (see the proof of Lemma~\ref{lem:Ko_pointwise}).  Define the following collection of straight-line contours:
\begin{align}
  \label{eq:contours}
  {\gamma}_1 &= u_1^{1/2}\left[\tfrac{1}{\sqrt{2}},\tfrac{1}{\sqrt{2}}+\tfrac{\delta_1}{\sqrt{2}} e^{i\pi/3}\right],  
  &  {\gamma}_1^e &= u_1^{1/2}\left(\tfrac{1}{\sqrt{2}} + \tfrac{\delta_1}{\sqrt{2}} e^{i\pi/3}\right) + i\R_+, \\
  {\gamma}_2 &= u_2^{1/2}\left[\tfrac{1}{\sqrt{2}},\tfrac{1}{\sqrt{2}}+\tfrac{\delta_2}{\sqrt{2}} e^{i 2\pi/3}\right],  
  &  {\gamma}_2^e &= u_2^{1/2}\left(\tfrac{1}{\sqrt{2}} + \tfrac{\delta_2}{\sqrt{2}} e^{i 2\pi/3}\right) + \R_{-}, 
\end{align}
Define $\gamma_1^c$ and $\gamma_2^c$ to be the piecewise linear contours 
\begin{align*}
  \gamma_1^c &= \overline{\gamma_1^e} \cup \overline{\gamma_1} \cup \gamma_1 \cup \gamma_1^e, \\
  \gamma_2^c &= \overline{\gamma_2^e} \cup \overline{\gamma_2} \cup \gamma_2 \cup \gamma_2^e, %  \label{}
\end{align*}
oriented to have non-decreasing imaginary part.  Define
\begin{equation}
  \label{eq:odK}
  \odK(u_1,y_1 ; u_2, y_2 ) = \frac{2^{u_1-u_2}}{2(\pi i)^2}
        \int\limits_{\gamma_2^c}\int\limits_{\gamma_1^c}
        \frac{e^{z_1^2 - 2z_1y_1}}{e^{z_2^2 - 2z_2y_2}}
        \frac{z_1^{u_1}}{z_2^{u_2}}
        \frac{dz_1dz_2}{z_1 - z_2}.
      \end{equation}
and, define $K_e = K - K_o$

When $u_1 \geq u_2,$ it is easily seen that the contours in \eqref{eq:phi+K} can be deformed to $\gamma_1^c$ and $\gamma_2^c$ respectively, so that $K_e = 0.$  
When $u_1 < u_2,$ we would still like to use the contours $\gamma_{1}^c$ and $\gamma_{2}^c,$ however, these contours cross, so that deforming the contours contributes a nonzero residue.  Further, when $u_1 < u_2,$ we must account for $\phi.$  For the remainder of the section, we assume that $u_1 < u_2.$

We will begin by giving a representation of $\phi$ which is useful for our purposes.  The contours $\gamma_1^c$ and $\gamma_2^c$ intersect at exactly two points, which are conjugates.  Let $\tau$ be the intersection point with positive imaginary part.  Let $\gamma^r$ be the contour that follows $\gamma_2^c$ from $\overline{\tau}$ to $\tau$ and which follows the vertical line through $\tau$ and $\overline{\tau}$ outside $\gamma_2.$  Orient $\gamma^r$ to have increasing imaginary part.  

Let $\gamma_{+}^r$ be the portion of $\gamma^r$ below $\overline{\tau}$ 
and  above $\tau$,
and let $\gamma_{-}^r$ be the portion of $\gamma^r$ that follows $\gamma_2^c.$  By adding a half loop to $\gamma_1^c$ that connects $\tau$ and $\overline{\tau}$ through the right-most component of $\C \setminus \gamma_2^c,$ 
we get the identity
\begin{equation}
  \phi + K - K_o = \frac{1}{\pi i} \int_{\gamma_{-}^r} \frac{e^{2z_2(y_2 - y_1)}}{(2z_2)^{u_2 - u_1}}\, d z_2.
  \label{eq:cont_res}
\end{equation}

We next represent $\phi$ as an integral.
From the residue theorem, we have that 
\begin{equation}
  \phi^{(u_1,u_2)}(y_1,y_2)
  = \frac{\one[y_2 > y_1]}{2\pi i}
  \oint \frac{e^{\xi (y_2 - y_1)}}{\xi^{u_2 - u1}}\, d\xi,
  \label{eq:phi_res_0}
\end{equation}
with the contour positively winding once around $0.$  As we have that 
$u_1 < u_2$ we can deform this contour to follow $\gamma^r$.\footnote{In the case
$u_2=u_1+1$, we must take the principal value at infinity.}
%although we must take the prinicpal value when $u_2 = u_1 + 1.$  
Additionally setting $\xi = 2z_2,$ we have
\begin{equation}
  \phi^{(u_1,u_2)}(y_1,y_2)
  = \frac{\one[y_2 > y_1]}{\pi i}
  \operatorname{pv}
  \int_{\gamma^r} \frac{e^{2z_2(y_2 - y_1)}}{(2z_2)^{u_2 - u_1}}\, d z_2.
  \label{eq:phi_res}
\end{equation}

Combining \eqref{eq:phi_res} and \eqref{eq:cont_res}, we have the following piecewise representation of $K_e$ when $u_1 < u_2-1$:\footnote{Again, principal values at infinity need to be used if $u_2=u_1+1$}
\begin{equation}
  K_e(u_1,y_1;u_2,y_2) 
  = \begin{cases}
    -\frac{1}{\pi i} \int_{\gamma_{+}^r} \frac{e^{2z_2(y_2 - y_1)}}{(2z_2)^{u_2 - u_1}}\, d z_2, & y_2 > y_1, \\
\frac{1}{\pi i} \int_{\gamma_{-}^r} \frac{e^{2z_2(y_2 - y_1)}}{(2z_2)^{u_2 - u_1}}\, d z_2, & y_2 \leq y_1.
  \end{cases}
  \label{eq:Ke}
\end{equation}

\begin{figure}
\begin{center}
\begin{tikzpicture}
  [ endstyle/.style={circle,fill=blue,inner sep=1.5pt}
  ]

  \begin{scope}[shift={(-1.5,0)}]
    \draw [dotted,-] (-1.0,0) -- (3.0,0);
    \draw [dotted,->] (3.0,0) -- (5.0,0) node [above left]  {$\Re\{z\}$};
    \draw [dotted,->] (0,-1.7) -- (0,1.7) node [below right] {$\Im\{z\}$};

  \end{scope}

  \begin{scope}[shift={(1.0,0)}]

  \coordinate [endstyle] (z1top) at (60:0.75cm)  {};
  \node [endstyle] (z1bot) at (300:0.75cm) {};
  \node [endstyle] (z1origin) at (0,0) {};
  \node at (-0.4,-0.4) {$\sqrt{\tfrac{u_1}{2}}$};
  \draw [->] (z1origin) -- node[anchor=east] {$\gamma_1$} (z1top);
  \draw [->] (z1top) -- node[anchor=east] {$\gamma_1^e$} +(0,1cm);
  \draw [->] (z1bot) -- (z1origin);
  \draw [<-] (z1bot) -- +(0,-1cm);
  \end{scope}

  \begin{scope}[shift={(-1.0,0)}]
  \coordinate [endstyle] (z2top) at (120:1.5cm)  {};
  \node [endstyle] (z2bot) at (240:1.5cm) {};
  \node [endstyle] (z2origin) at (0,0) {};
  \node at (0.2,-0.4) {$\sqrt{\tfrac{u_2}{2}}$};
  \draw [->] (z2origin) -- node[anchor=west] {$\gamma_2$} (z2top);
  \draw [->] (z2top) -- node[anchor=north] {$\gamma_2^e$} +(-1cm,0);
  \draw [->] (z2bot) -- (z2origin);
  \draw [<-] (z2bot) -- +(-1cm,0);
  \end{scope}

\end{tikzpicture}
\end{center}
\caption{
  The contours over which we will eventually estimate $K(u_1,y_1;u_2,y_2),$ with $u_1 \geq u_2.$ The values of $\delta_1$ and $\delta_2$ are fixed positive constants determined in the proof of Lemma~\ref{lem:Ko_pointwise}.
}
\label{fig:contours_1geq2}
\end{figure}
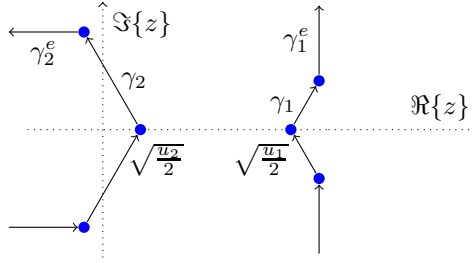

\begin{figure}
\begin{center}
\begin{tikzpicture}
  [ endstyle/.style={circle,fill=black,inner sep=1.0pt},
          taustyle/.style={circle,fill=blue,inner sep=1.5pt},
          g1/.style={black},
          g2/.style={blue},
          gr/.style={very thick,Maroon,dashed}
  ]

  \begin{scope}[shift={(-2,0)}]
    \draw [dotted,-] (-2.5,0) -- (1.0,0) node [above left]  {$\Re\{z\}$};
    \draw [dotted,->] (-1.7,-2.0) -- (-1.7,2.0) node [above left] {$\Im\{z\}$};

  \begin{scope}[shift={(-1.5,0)}]

  \coordinate [endstyle] (z1top) at (60:1.5cm)  {};
  \node [endstyle] (z1bot) at (300:1.5cm) {};
  \node [endstyle] (z1origin) at (0,0) {};
 % \node at (-0.6,-0.4) {$\sqrt{\tfrac{u_1}{2}}$};
  \draw [g1,->] (z1bot) -- (z1origin) --  (z1top) -- +(0,0.5cm);
  \draw [g1,-] (z1bot) -- node[anchor=west] {$\gamma_1^c$}  +(0,-0.5cm);
  \end{scope}

  \begin{scope}[shift={(-1.0,0)}]
  \coordinate [endstyle] (z2top) at (120:2.0cm)  {};
  \node [endstyle] (z2bot) at (240:2.0cm) {};
  \node [endstyle] (z2origin) at (0,0) {};
  %\node at (0.2,-0.4) {$\sqrt{\tfrac{u_2}{2}}$};
  \draw [g2,->] (z2bot) -- (z2origin) -- (z2top) -- node[anchor=north] {$\gamma_2^c$} +(-0.5cm,0);
  \draw [g2,-] (z2bot) -- +(-0.5cm,0);

  \node [taustyle] (tau) at (120:0.5cm) {};
  \node at (0.05,0.45) {$\tau$};
  \node [taustyle] (taub) at (240:0.5cm) {};
  \node at (0.05,-0.4) {$\overline{\tau}$};
  \draw [gr,->] (taub) -- (z2origin) -- (tau) -- +(0,1.5cm);
  \draw [gr,-] (taub) -- node[below left] {$\gamma^r$} +(0,-1.5cm);

  \end{scope}
  \end{scope}

  \begin{scope}[shift={(3,0)}]
    \draw [dotted,-] (-2.5,0) -- (1.0,0) node [above left]  {$\Re\{z\}$};
    \draw [dotted,->] (-1.7,-2.0) -- (-1.7,2.0) node [above left] {$\Im\{z\}$};

  \begin{scope}[shift={(-1.65,0)}]

  \coordinate [endstyle] (z1top) at (60:0.35cm)  {};
  \node [endstyle] (z1bot) at (300:0.35cm) {};
  \node [endstyle] (z1origin) at (0,0) {};
 % \node at (-0.6,-0.4) {$\sqrt{\tfrac{u_1}{2}}$};
  \draw [g1,-] (z1bot) -- (z1origin) node[anchor=east] {$\gamma_1^c$}   --  (z1top) -- +(0,1.5cm);
  \draw [g1,-] (z1bot) -- +(0,-1.5cm);
  \end{scope}

  \begin{scope}[shift={(-1.0,0)}]
  \coordinate [endstyle] (z2top) at (120:2.0cm)  {};
  \node [endstyle] (z2bot) at (240:2.0cm) {};
  \node [endstyle] (z2origin) at (0,0) {};
  %\node at (0.2,-0.4) {$\sqrt{\tfrac{u_2}{2}}$};
  \draw [g2,->] (z2bot) -- (z2origin) -- (z2top) -- node[anchor=north] {$\gamma_2^c$} +(-0.5cm,0);
  \draw [g2,-] (z2bot) -- +(-0.5cm,0);

  \node [taustyle] (tau) at (120:0.95cm) {};
  \node at (-0.2,0.85) {$\tau$};
  \node [taustyle] (taub) at (240:0.95cm) {};
  \node at (-0.2,-0.8) {$\overline{\tau}$};
  \draw [gr,->] (taub) -- (z2origin) -- (tau) -- +(0,1.1cm);
  \draw [gr,-] (taub) -- node[below right] {$\gamma^r$} +(0,-1.1cm);

  \end{scope}
  \end{scope}

\end{tikzpicture}
\end{center}
\caption{
  The contours $\gamma_1^c,\gamma_2^c$ and $\gamma^r$ when $u_1 \leq u_2;$ the true picture will be one of these. }
\label{fig:contours_1leq2}
\end{figure}
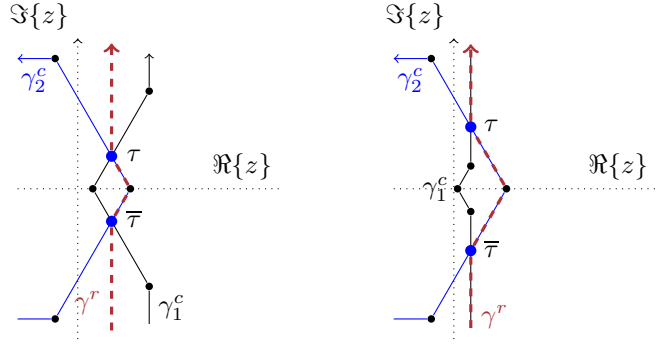

\subsection{Scaling}

Define the scaled variables 
\begin{equation}
  \label{eq-scaling}
  \zM[i] = 2^{1/2}u_i^{-1/6}z_i - u_i^{1/3}\,,\quad
  \yM[i] = 2^{1/2}u_i^{1/6}y_i - 2 u_i^{2/3}\,.
\end{equation}
  Substituting these variables into the integrand, we have
\begin{align}
  u \log z + z^2 - 2zy
  \label{eq:replacement}
  &=\tfrac{u}{2}(\log \tfrac{u}{ 2} + 1) - \sqrt{2u}y \\
  \notag
  &+u\left(
  \log(1+u^{-1/3}\zM) - u^{-1/3}\zM + \frac{1}{2} u^{-2/3}\zM^2 - \frac{\zM \yM}{u}
  \right).
\end{align}

Define 
\begin{align}
  \G[i] &=  
  \log(1+u_i^{-1/3}\zM[i]) - u_i^{-1/3}\zM[i] + \frac{1}{2} u_i^{-2/3}\zM[i]^2 - \frac{\zM[i] \yM[i]}{u_i},
  \text{ and } \label{eq:G} \\
  J(u_i,y_i) &= 2^{u_i} 
  \exp(\tfrac{u_i}{2}(\log \tfrac{u_i}{2} + 1) - \sqrt{2u_i}y_i), \label{eq:J}
\end{align}
for $i=1,2$
so that we may rewrite \eqref{eq:odK} as  
\begin{equation}
  \odK = \frac{1}{2(\pi i)^2}\frac{J(u_1,y_1)}{J(u_2,y_2)} 
  \iint
  \frac{e^{u_1 \G[1]}}{e^{u_2 \G[2]}} \frac{dz_1 dz_2}{z_1 - z_2}.
  \label{eq:hermite3}
\end{equation}

\begin{remark}
  \label{rem:airylimit}
  The Airy kernel limit can be seen from this representation (c.f.\ Lemma~\ref{lem:quant_Airy}, where a different representation is used).  Taylor expanding the log in $G$ around $\zM = 0,$ one sees
\begin{equation}
  \label{eq:taylor}
  u_i \G[i] = -\zM[i]\yM[i] + \frac{1}{3}\zM[i]^3 + O(u_i^{-1/3}\zM[i]^4),
\end{equation}
noting the error is uniform in $\yM[i].$  On the contours $\gamma_1$ and $\gamma_2,$ as well as their conjugates below the axis, one gets that this 
error is order $\zM[i]^4 = o(u_i^{-1}).$  One can argue that
\[
 \iint
 \frac{e^{u_1\G[1]}}{e^{u_2\G[2]}} \frac{dz_1 dz_2}{z_1 - z_2}
  =o(1) 
  +\iint\limits_{\gamma_2,\gamma_1}
  \frac
  {e^{{\zM[2]^3}/{3} - \zM[2] \yM[2] }}
  {e^{{\zM[1]^3}/{3} - \zM[1] \yM[1] }}
  \frac{dz_1 dz_2}{z_1 - z_2}.
\]
The Airy function, meanwhile, has the following representation (see \cite[9.5.4]{DLMF})
\[
  \Ai(y) = \frac{1}{2\pi i} \int_{\infty e^{-i\pi/3}}^{\infty e^{i \pi /3}}
  e^{{z^3}/{3} - zy}\,dz,
\]
from which point it can be deduced that the kernel in question converges to the Airy kernel when $u_1$ and $u_2$ go to infinity with $u_1 - u_2 = o(u_1^{2/3}).$  %In Lemma~\ref{lem:quant_Airy}, we give norm convergence of $K_o$ to the Airy kernel when restricted to positive $\yM[i].$
\end{remark}

\section{Proof of the right tail correlation estimate for $u_1^{1/3} \ll u_2 - u_1 \ll u_1^{2/3}$ }
\label{sec:corproof}

\subsection{Overview}

Throughout this section we will assume $u_2 \geq u_1$ and write $\Delta u = u_2 - u_1$ and $u=u_1.$  Also, introduce the measures
$\mu_i(d\yM[i]) = d\yM[i]/(\sqrt{2}u_i^{1/6})$ for $i=1,2.$  Our main goal is to prove Proposition~\ref{prop:rtcorrelation}.

As in \eqref{eq:k_diff}, the joint probability can be expressed by $\det( \Id - \tilde{K}\vert_I).$  It is convenient to express the kernel $\Id-\tilde{K}\vert_I$ as a $2 \times 2$ matrix of kernels.  This acts on vectors of elements of $L^2(dy_1) \oplus L^2(dy_2)$ by first performing matrix multiplication and then by the usual integration.  Define ${\tilde K}$ and ${\tilde \phi}$ as
\begin{align*}
  {\tilde K}(u_1,\yM[1];u_2,\yM[2]) &= 
 % \frac{1}{\sqrt{2}u_2^{1/6}}
  \frac{J(u_2,y_2)}{J(u_1,y_1)}\left(\phi^{(u_1,u_2)}(y_1,y_2) + K(u_1,y_1;u_2,y_2)\right), \\
  {\tilde \phi}(u_1,\yM[1];u_2,\yM[2]) &= 
  %\frac{1}{\sqrt{2}u_2^{1/6}}
  \frac{J(u_2,y_2)}{J(u_1,y_1)} \phi^{(u_1,u_2)}(y_1,y_2).
\end{align*}
Implicitly, we shift and scale the action of these kernels on the $L^2$ integrating them against functions in the $\yM[i]$ coordinates.  Hence, the measures on the underlying $L^2$ spaces are now $L^2(d\mu_1) \oplus L^2(d\mu_2).$  Let $\Proj{i}$ denote the multiplication operator by the characteristic function $\one[ \yM[i] \geq t_i]$ for $i =1,2.$  
Then,
\begin{equation}
  \Id - \tilde{K}\vert_I
  \longleftrightarrow
  \begin{bmatrix}
    \Id - \Proj{1}{\tilde K}(u_1,\cdot;u_1,\cdot)\Proj{1}& 
    \Proj{1}\bigl({\tilde \phi}(u_1,\cdot;u_2,\cdot) - {\tilde K}(u_1,\cdot;u_2,\cdot)\bigr)\Proj{2} \\
  -\Proj{2}{\tilde K}(u_2,\cdot;u_1,\cdot)\Proj{1}
  &
   \Id - \Proj{2}{\tilde K}(u_2,\cdot;u_2,\cdot)\Proj{2} \\
  \end{bmatrix}.
  \label{eq:blocks}
\end{equation}

As we are only interested in the determinant of this operator, we can subtract an operator multiple of the second row from the first.  Working in the case that $\Proj{1}\Proj{2} = \Proj{1},$ we will subtract the left-multiplication of the second row by $\Proj{1}{\tilde \phi}(u_1,\cdot;u_2,\cdot)$ from the first.  As all the ${\tilde K}$ terms are nearly the Airy kernel (explicit estimates are given in Section~\ref{sec:sharp}), the differences between the various ${\tilde K}$ will be smaller in norm than the kernels themselves.  Further, ${\tilde \phi}$ behaves like an approximation to the identity for a certain nice class of functions.  Hence, after doing this row operation, the matrix of kernels is approximately lower triangular, and its determinant is hence very nearly the determinant of its lower-right block.  This allows us to estimate
\[
  F(u_1,t_1 ; u_2, t_2)
  =
  \left|
  \det\left(\Id - \tilde{K}\vert_I\right)
  -
  \det\left( \Id - \Proj{2}{\tilde K}(u_2,\cdot;u_2,\cdot)\Proj{2}\right)
  \right|.
\]

Let $\phiop$ denote the operator $L^2(d\mu_2) \to L^2(d\mu_1)$ given by
\begin{equation}
  \label{eq:phiop}
  \phiop[f](\yM[1]) 
  = \int_\R
  {\tilde \phi}(u_1,\yM[1];u_2,\yM[2])f(\yM[2])\,\mu_2(d\yM[2]).
\end{equation}
The exact sense in which $\phiop \approx \Id$ is given by Lemma~\ref{lem:phi_identity}.  To prove this, we will pass to Fourier space, and so we state our Fourier transform conventions.
 Let $\mathcal{F}$ denote the Fourier transform with the normalization
  \[
    \mathcal{F}[\phi](\xi) = \frac{1}{2\pi}\int_\R e^{-i \xi x} \phi(x)\,dx.
  \]
  With this normalization $\mathcal{F}$ has $L^2(dx)\to L^2(dx)$ operator norm $(2\pi)^{-1/2},$ and its inverse carries no factors of $\pi.$
  Define the $\|\cdot\|_{H^2}$ norm by
  \[
	  \|f\|_{H^2} = \|(I-\Laplacian)f\|_{L^2(dx)},
  \]
  where $\Laplacian$ is the $1$-dimension Laplacian $\Laplacian f(x) = \partial_x^2 f(x).$  Let $H^2$ denote the corresponding subspace of $L^2$ given by taking the closure of the $C_c^\infty$ functions under $H^2.$  By considering the Fourier transform, we have that the $\|\cdot\|_{H^2}$ norm is equivalent to the norm
  \[
	  f \mapsto
  \| f(x) \|_{L^2(dx)}
  +\| \partial_x f(x) \|_{L^2(dx)}
  +\| \partial_x^2 f(x) \|_{L^2(dx)}.
  \]

  Recall that the inverse Laplacian $(I-\Laplacian)^{-1}$ operator on $L^2(dx)$ can be defined as the Fourier multiplier operator
  \[
	  \mathcal{F}[(I-\Laplacian)^{-1}f](\xi) = 
	  \frac{c}{1+\xi^2}
	  \mathcal{F}[f](\xi)
  \]
  for some constant $c.$  Alternatively, we can write it in convolution form as
  \[
	  (I-\Laplacian)^{-1}f = c e^{-|\cdot|} * f
  \]
  for some other constant $c.$
  \subsection{$\phi$ is an approximate identity}
  \begin{lemma}[Approximate identity estimates for $\phiop$]
  \label{lem:phi_identity}
%  In what follows, we let $\Delta u = u_2 - u_1$ and use $u=u_1.$ 
  For any $0<\beta < \delta < \tfrac16$ there is a $C>0$ sufficiently large so that for all $u_1,u_2 \in \mathbb{N}$ with $u^{1/3+\delta} \leq \Delta u \leq u^{2/3-\delta},$ the following hold.
  \begin{enumerate}[(i)]
	  \item For all $\yM[i]$ ,
      \[
	      |{\tilde \phi}( u_1, \yM[1] ; u_2, \yM[2])|
\leq 
C{\sqrt{u}}\exp\left( 
-\frac{\Delta u}{Cu^{2/3}}\left( \yM[1] + \yM[2] \right)
\right).
\]
%\item For all $\yM[i]$ and any integer $\ell \geq 0,$
%      \[
%	      |\partial_{\yM[1]}^\ell {\tilde \phi}( u_1, \yM[1] ; u_2, \yM[2])|
%\leq 
%C(1+u^{1/3}|\yM[1]|)^\ell
%{\sqrt{u}}\exp\left( 
%-\frac{\Delta u}{Cu^{2/3}}\left( \yM[1] + \yM[2] \right)
%\right).
%\]

    \item For any $|\yM[i]| < u^{\beta},$ $i=1,2,$  
      \[
	|{\tilde \phi}( u_1, \yM[1] ; u_2, \yM[2])|
\leq 
C 
\sqrt{\frac{u}{\Delta u}}
\exp\left(
-\frac{u^{2/3}}{2\Delta u}\left(\yM[1] - \yM[2]\right)^2
\right)
+C\exp\left( -\frac{(\Delta u)^{1/3}}{C} \right).
      \]
    \item For any $f \in C^1(\R)$ supported on $[-u^{\beta},\infty)$ with absolutely continuous derivative $f',$
      \begin{multline*}
%      \left\|
%      \one[ |\cdot| \leq (\log u)^\beta ]
%      \int{\tilde K}_e(u_1, \cdot ; u_2, y_2)f(y_2) - f(\cdot
%      \right\|_2
%      \left\|
%      \one[ |\cdot| \leq (\log u)^\beta ]
%      ({\tilde K}_e(u_1, \cdot ; u_2, \cdot)-I)f(\cdot)
%      \right\|_2
%      \leq C\left( \tfrac{\Delta u}{u^{2/3}}(\|f\|_2+\|f''\|_2
%      +\sqrt{u}\|  
%      \one[ |\cdot| \geq (\log u)^\beta ]
%      f(\cdot)
%      e^{\cdot (\Delta u)/u^{2/3}}
%      \|
%      \right)
      \left\|
      \one[ |\cdot| \leq u^{\beta} ]
      (\phiop[f](\cdot) - f(\cdot))
      \right\|_{L^2(\mu_1)}
      \\
      \leq C\tfrac{\Delta u}{u^{2/3-\beta}} 
      \left( 
      \|f\|_{L^2(\mu_2)}
      +\|f'\|_{L^2(\mu_2)}
      +\|f''\|_{L^2(\mu_2)}
       \right).
     \end{multline*}
     \item For any $g \in L^2(dx)$ supported on $[0,\infty),$ 
		     \[
\left\|
      \one[ |\cdot| \leq u^{\beta} ]
      (\phiop - \Id)(\Id - \Laplacian)^{-1}[g](\cdot)
      \right\|_{L^2(\mu_1)}
      \leq C\tfrac{\Delta u}{u^{2/3-\beta}}
      \|g\|_{L^2(\mu_2)}.
		     \]
  \end{enumerate}
\end{lemma}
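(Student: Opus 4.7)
The plan is to derive an explicit Gaussian approximation for $\tilde\phi$ by combining Stirling's formula on the factorial with a Taylor expansion of $\log(y_2-y_1)$, and then extract (i)--(iv) from this approximation by standard approximate-identity manipulations, passing through Fourier space for (iv). Starting from
\[
\log\tilde\phi = \log\tfrac{J(u_2,y_2)}{J(u_1,y_1)} + (\Delta u - 1)\log(y_2-y_1) - \log((\Delta u-1)!)
\]
on $\{y_2 > y_1\}$, I would substitute the scaling $y_i = (\yM[i] + 2u_i^{2/3})/(\sqrt{2}u_i^{1/6})$, apply Stirling to $(\Delta u - 1)!$, and Taylor expand $\log(y_2-y_1)$ around its baseline value $\Delta u/\sqrt{2u}$. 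Careful bookkeeping shows that the extensive-in-$u$ and linear-in-$\yM[i]$ terms cancel exactly, leaving
\[
\log\tilde\phi = \tfrac12\log\!\tfrac{u}{\pi\Delta u}-\frac{u^{2/3}(\yM[2]-\yM[1])^2}{2\Delta u} + R(u_1,\yM[1];u_2,\yM[2]),
\]
with $R$ dominated by a cubic correction of order $u\,|\yM[2]-\yM[1]|^3/(\Delta u)^2$ plus Stirling and baseline remainders of order $O((\Delta u)^2/u) + O(1/\Delta u)$.

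For (i), the crucial extra input is the one-sided inequality $\log(1+x) \leq x - x^2/2 + x^3/3$, which combined with the $e^{-\sqrt{2u_i} y_i}$ factors in $J$ (i.e.\ $e^{-u_i^{1/3}\yM[i]}$ after scaling) produces the global bound; the surviving decay rate in $\yM[1]+\yM[2]$ is of order $\Delta u/(Cu^{2/3})$, obtained by matching the sub-Gaussian exponent with the large-$\yM[i]$ linear contribution. For (ii), the hypothesis $\beta < \delta$ together with $\Delta u \leq u^{2/3-\delta}$ forces $R = O(1)$ on the region where the Gaussian envelope is not negligible (i.e.\ $|\yM[2]-\yM[1]| \lesssim \sqrt{\Delta u}/u^{1/3}$); on the complementary region, a direct computation shows that the Gaussian envelope is itself bounded by $\exp(-(\Delta u)^{1/3}/C)$, which yields the claimed additive remainder.

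For (iii), I would write
\[
\phiop[f](\yM[1]) - f(\yM[1]) = \int \tilde\phi\,(f(\yM[2])-f(\yM[1]))\,\mu_2(d\yM[2]) + f(\yM[1])\!\left(\int \tilde\phi\,\mu_2(d\yM[2]) - 1\right),
\]
and expand $f(\yM[2]) - f(\yM[1])$ via Taylor's formula with integral remainder. Integrated against the Gaussian part of $\tilde\phi$, the first-order bias contributes at most $O((\Delta u)^2/u^{4/3}) \cdot |f'(\yM[1])|$ (the small shift of the Gaussian mean arising from the $-\Delta u/(4u)$ baseline correction), and the quadratic moment yields an $O(\Delta u/u^{2/3}) \cdot |f''(\yM[1])|$ contribution; the normalization defect $|\int\tilde\phi\,\mu_2-1|$ is $O(\Delta u/u^{2/3-\beta})$ by (ii) and Gaussian integration. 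Cauchy--Schwarz in $\mu_2$ and the restriction $|\yM[1]| \leq u^\beta$ convert these pointwise bounds into the stated $L^2(\mu_1)$ estimate, with the $u^\beta$ penalty absorbing both the cubic error region and the slight mismatch between the two measures $\mu_1,\mu_2$. Finally, (iv) follows from (iii) applied to $f = (I-\Laplacian)^{-1}g = c e^{-|\cdot|}*g$, using the Fourier-multiplier bounds $\|f\|_{L^2}+\|f'\|_{L^2}+\|f''\|_{L^2} \leq C\|g\|_{L^2}$; the piece of $f$ supported on $(-\infty,-u^\beta)$, which falls outside the support hypothesis of (iii), is exponentially small since $g$ is supported on $[0,\infty)$, and its contribution to $\phiop f$ is controlled directly by (i).

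The main technical obstacle is the uniform control of the remainder $R$ in the first step. The cubic Taylor correction is borderline-large precisely at the boundary $|\yM[i]| \sim u^\beta$, and the delicate point is that wherever $R$ fails to be $O(1)$, the Gaussian envelope $\exp(-u^{2/3}(\yM[2]-\yM[1])^2/(2\Delta u))$ is already narrow enough that the total quantity can be absorbed into the additive exponentially-small error in (ii). Propagating this split faithfully through the $L^2(\mu_1)$ estimates in (iii)--(iv) without losing more than a $u^\beta$ factor is where most of the work lies.
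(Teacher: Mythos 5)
Your plan is a genuinely different route from the paper's. The paper never touches Stirling or the explicit algebraic formula for $\phi^{(u_1,u_2)}$: it deforms the contour integral representation \eqref{eq:phi_res} to a vertical line through $\Re\tau$, expands $\log(1 + iw/\Re\tau)$ to second order to isolate a Gaussian in the Fourier variable $w$, and then—crucially for part (iii)—treats $\tilde\phi^{R,\zeta}$ as a convolution $2(U*V)$ and works entirely in Fourier space, where $\phiop - \Id$ is a multiplier $\approx -c\xi^2$ and the $H^2$ norm of $f$ appears automatically from $\xi^2\hat f$. You instead propose a direct Taylor expansion of the closed-form product $(y_2-y_1)^{\Delta u - 1}/(\Delta u - 1)!$ together with Stirling, and moment calculations for (iii)–(iv). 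Both derive the same Gaussian envelope $\sqrt{u/\Delta u}\exp(-u^{2/3}(\yM[2]-\yM[1])^2/(2\Delta u))$, and your identification of where the cubic Taylor remainder degrades exactly matches the paper's truncation structure (the regime where the Gaussian is already $\exp(-(\Delta u)^{1/3}/C)$). What the paper's Fourier route buys you is a much cleaner and more mechanical path to the $L^2$ operator bounds in (iii): Young's inequality and the multiplier picture handle the $\|f''\|$ term without pointwise Taylor expansions, and the rescaling mismatch between $\mu_1$ and $\mu_2$ is isolated in a single explicit computation (the paper's \eqref{eq:annoyed}). Your moment-expansion route can work but the bookkeeping is heavier and you have left most of it to the reader.

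One concrete imprecision: the extensive and linear terms do \emph{not} ``cancel exactly.'' Working out the linear coefficients, the $J$-ratio contributes $-u_2^{1/3}\yM[2] + u_1^{1/3}\yM[1]$ while $(\Delta u)\log(1+b/a)$ contributes $\tfrac{\sqrt{u_1}+\sqrt{u_2}}{2}(\yM[2]/u_2^{1/6} - \yM[1]/u_1^{1/6})$ at first order; the sum is $-\tfrac{\sqrt{u_2}-\sqrt{u_1}}{2}(\yM[1]/u_1^{1/6} + \yM[2]/u_2^{1/6})$, which is exactly the paper's $\xi$ in \eqref{eq:Kexi}. On $|\yM[i]|<u^\beta$ this residual is $O(u^{\beta-\delta}) = o(1)$ and can be absorbed into your $R$, which is why (ii)--(iv) still go through, but in (i) this linear residual is not an ``extra input'' layered on top of a clean Gaussian—it \emph{is} the entire claim of (i). Your $R$ inventory (cubic term, Stirling, baseline) omits it, so the stated structure of your central approximation needs a correction before the argument for (i) is sound.
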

\begin{proof}
  From \eqref{eq:contours}
  we have that $\gamma_1$ and $\gamma_2$ intersect for all $u_1$ sufficiently large.  Hence $\tau$ is given by 
 \begin{equation}
   \tau = \frac{\sqrt{u_1} + \sqrt{u_2}}{2\sqrt{2}} + i \frac{\sqrt{3}}{2\sqrt{2}}({\sqrt{u_2} - \sqrt{u_1}}).
   \label{eq:tau}
 \end{equation}
 When $y_1 \leq y_2,$ a deformation of the contour in \eqref{eq:phi_res} gives 
  \begin{equation}
    \label{eq:Kec1}
    {\tilde \phi}( u_1, \yM[1] ; u_2, \yM[2])
    =
    \frac{J(u_2,y_2)}{J(u_1,y_1)}
    \frac{1}{\pi i} \int\limits_{\Re z_2 = \Re \tau} \frac{e^{2z_2(y_2 - y_1)}}{(2z_2)^{u_2 - u_1}}\, d z_2.
  \end{equation}
  Note that in conclusions (ii), (iii) and (iv) of the lemma, we consider $|\yM[1]| < u^{\beta}$ and $\yM[2] > -u^{\beta}.$  For the $y_i$ and the $u_i$ that we consider, we have, using that $\Delta u \geq u^{1/3+\delta}$ in the second inequality,
  \begin{align*}
    y_1
    &\leq \sqrt{2u_1} + \frac{u_1^{\beta}}{\sqrt{2}u_1^{1/6}} \\
    &\leq \sqrt{2u_2} + \frac{u_1^{\beta}}{\sqrt{2}u_1^{1/6}} - \Omega\left( \frac{u_1^{\delta}}{u_1^{1/6}} \right) \\
%    &\leq \sqrt{2u_2} + \frac{u_1^{\delta}}{\sqrt{2}u_1^{1/6}} - \frac{(\sqrt 2 - o(1))u_1^{\delta}}{u_1^{1/6}} \\
    &\leq \sqrt{2u_2} + \frac{\yM[2]}{\sqrt{2}u_1^{1/6}} = y_2.
  \end{align*}
  Hence expression \eqref{eq:Kec1} always holds for these $\yM[i].$  For conclusion (i), the bound is trivially satisfied in the case $y_1 > y_2,$ and hence it suffices to consider the case $y_1 \leq y_2.$

  Using the definition \eqref{eq:J} of $J(u_i,y_i),$ we have
  \begin{equation}
    \label{eq:Kec2}
    {\tilde \phi}
    =
    \exp( \xi(u_1,\yM[1];u_2,\yM[2]))
\frac{1}{\pi i} 
\int\limits_{\Re z_2 = \Re \tau} e^{2i\Im z_2(y_2 - y_1)}\left(\frac{\Re \tau}{z_2}\right)^{u_2 - u_1}\, d z_2.
  \end{equation}
  where
  \[
    \exp( \xi(u_1,\yM[1];u_2,\yM[2]))
    =
\left(\frac{u_2}{u_1}\right)^{\tfrac{u_1}{2}}
\left(\frac{2|\Re \tau|^2}{eu_2}\right)^{\tfrac{u_1 - u_2}{2}}
e^{2 \Re \tau (y_2 - y_1) - \sqrt{2u_2}y_2 + \sqrt{2u_1}y_1}.
  \]
  Expanding these definitions, we have that
\begin{equation}
  \xi(u_1,\yM[1];u_2,\yM[2])
=
-\frac{\sqrt{u_2} - \sqrt{u_1}}{2}
\left(
\frac{\yM[1]}{u_1^{1/6}}
+\frac{\yM[2]}{u_2^{1/6}}
\right)
+O\left( \frac{(\Delta u)^3}{u^2} \right).
\label{eq:Kexi}
  \end{equation}
  Conclusion (i) of the Lemma now follows from bounding the integral in \eqref{eq:Kec2} by absolute value and \eqref{eq:Kexi}.

  We will eventually truncate the integral over $\gamma_{+}^r$ into $|\Im z_2| \leq R$ and $|\Im z_2| > R,$ where $R=R(u)$ to be chosen later satisfies $R^3 \Delta u / u^{3/2} = O(1).$  Note that this implies that $R = o(u^{1/2}).$ 

  Define $\zeta(w,u)$ implicitly by
  \[
    \Delta u \log\left( 1 + \frac{iw}{\Re \tau}\right)
    = 
    i\Delta u \frac{w}{\Re \tau}
    +\frac{\Delta u}{2}\left(\frac{w}{\Re \tau}\right)^2
    +
    \zeta(w, u).
  \]
  Then for each $u,$ $\zeta(w,u)$ is analytic in $w$ for all $w$ with $\Im w < \Re \tau$ and $|\zeta(w,u)| = O\left(\Delta u\cdot |w|^3/u^{3/2} \right)$ uniformly in $|w| \leq 2R.$  Note that $| \Re \zeta(w,u) | = O(\Delta u w^4/u^2)$ for real $w.$ 
  We use $\zeta$ to express \eqref{eq:Kec2} as
  \begin{equation}
    \label{eq:Kec3}
    {\tilde \phi}
    =
\frac{e^{\xi}}{\pi} 
\int\limits_{\R}
\exp\left(-\zeta + iw\left(2(y_2 - y_1)-\frac{\Delta u}{\Re \tau}\right)
-\frac{\Delta u}{2}\left(\frac{w}{\Re \tau}\right)^2\right)
\, dw.
  \end{equation}
 Define
    \begin{equation}
    \label{eq:KeH}
    \mathcal{H}
    = 2(y_2 - y_1) - \frac{\Delta u}{\Re \tau}.
  \end{equation}
%  and
%    \begin{equation}
%    \label{eq:Kez}
%    {\tilde \phi}^\zeta
%    =
%\frac{e^{\xi}}{\pi} 
%\int\limits_{\R}
%\exp\left(-\zeta + iw\mathcal{H}
%-\frac{\Delta u}{2}\left(\frac{w}{\Re \tau}\right)^2\right)
%\, dw.
%  \end{equation}
  Using the analyticity of the integrand and the polynomial decay of the integrand as $|\Re w| \to \infty,$  we may make the replacement $w \mapsto w + i\mathcal{H} \frac{\Re \tau}{\Delta u}$ in \eqref{eq:Kec3}, provided $\mathcal{H} < \Delta u,$ to get 
\begin{equation}
    \label{eq:Kez_loc}
    {\tilde \phi}
    =
\frac{e^{\xi}}{\pi} 
\int\limits_{\R}
\exp\left(-\zeta(w+i\mathcal{H},u)
-\frac{\Delta u}{2}\left(\frac{w}{\Re \tau}\right)^2
-\frac{(\mathcal{H}\Re \tau )^2}{2\Delta u}
\right)
\, dw.
  \end{equation}

  We truncate this integral into $|w| > R$ and $|w| < R.$  Let 
\begin{equation}
    \label{eq:Kez_loc_R}
    {\tilde \phi}^{R}
    =
\frac{e^{\xi}}{\pi} 
\int\limits_{-R}^R
\exp\left(-\zeta(w+i\mathcal{H},u)
-\frac{\Delta u}{2}\left(\frac{w}{\Re \tau}\right)^2
-\frac{(\mathcal{H}\Re \tau )^2}{2\Delta u}
\right)
\, dw.
  \end{equation}
  For $|\mathcal{H}| < R,$ we have that
  $|\zeta(w+i\mathcal{H},u)| = O(\Delta u R^3/u^{3/2})=O(1),$ giving
\begin{align}
    \label{eq:Kez_loc_R2}
%	\nonumber
    {\tilde \phi}^{R}
    &\leq
    \frac{\exp\left(\xi + O(1)
-\frac{(\Re \tau \mathcal{H})^2}{2\Delta u}\right)
}{\pi} 
\int\limits_{\R}
\exp\left(
-\frac{\Delta u}{2}\left(\frac{w}{\Re \tau}\right)^2
\right)
\, dw \\
&\leq 
\frac{\Re \tau\sqrt{2}}{\sqrt{\pi \Delta u}}
\exp\left(\xi + O\left(1\right)
-\frac{(\mathcal{H}\Re \tau )^2}{2\Delta u}\right).
\nonumber
  \end{align}
  Recalling \eqref{eq:KeH}, we have
  \[
    \mathcal{H}
    = 2(y_2 - y_1) - \frac{\Delta u}{\Re \tau}
    = \sqrt{2}\left( 
    \frac{\yM[2]}{u_2^{1/6}}
    -\frac{\yM[1]}{u_1^{1/6}}
    \right).
  \]
  Hence with $|\yM[i]| < u^{\beta},$ we have that
  \[
    \mathcal{H} = \frac{\sqrt{2}}{u^{1/6}}( \yM[2] - \yM[1]) + O\left( \frac{u^{\beta}\Delta u}{u^{7/6}} \right).
  \]
  Applying this to \eqref{eq:Kez_loc_R2} and that $\xi=O(1)$ for these $\yM[i]$, we have that 
\begin{align}
    \label{eq:Kez_loc_R3}
    {\tilde \phi}^{R}
\leq 
\frac{\Re \tau}{\sqrt{\Delta u}}
\exp\left(O\left(1\right)
-
\frac{u^{2/3}}{\Delta u}
\frac{(\yM[2]-\yM[1])^2}{2}\right).
  \end{align}

  As for the portion of the integral with $|w| > R,$ note that by the definition of $\zeta,$ we have 
  \[
    \Re\left[
\zeta(w+i\mathcal{H},u)
+\frac{\Delta u}{2}\left(\frac{w}{\Re \tau}\right)^2
+\frac{(\mathcal{H}\Re \tau )^2}{2\Delta u}
\right]
= \Delta u \log \left| 1+ \frac{iw - \mathcal{H}}{\Re \tau}\right|.
  \]
Hence, we get the pointwise bound
\begin{equation}
\left|
{\tilde \phi}
-
{\tilde \phi}^{R}
\right|
\leq
\frac{2e^{\xi}}{\pi}
\int\limits_R^\infty
\left| 1+ \frac{iw - \mathcal{H}}{\Re \tau}\right|^{-\Delta u}
dw.
  \label{eq:Kez_tail}
\end{equation}
For $|\yM[i]| < u^{\beta}$ we have $|\mathcal{H}| = o(1).$
Hence for $w > 2\Re \tau,$ the contribution of the integral is $O( (2-o(1))^{-\Delta u})$ as long as $R < 2\Re \tau.$  Fix now $R=u^{1/2}/(\Delta u)^{1/3},$ which satisfies this condition as well the earlier condition that $R^3\Delta u /u^{3/2} = O(1).$  For $w <2 \Re \tau,$ we have that
\[
\left| 1+ \frac{iw - \mathcal{H}}{\Re \tau}\right|
=\exp\left( -\Omega\left( \frac{\mathcal{H}}{\Re \tau} \right) + \frac{w^2}{2(\Re \tau)^2} +O(1)\right)
\]
Hence we get that
\[
  \int\limits_R^{2\Re \tau}
\left| 1+ \frac{iw - \mathcal{H}}{\Re \tau}\right|^{-\Delta u}
dw=\exp\left( -\Omega\left( \frac{R^2}{u} \Delta u \right) \right).
\]
Applying this to \eqref{eq:Kez_tail}, we conclude that for $|\yM[i]| < u^{\beta}$
\begin{equation}
\left|
{\tilde \phi}
-
{\tilde \phi}^{R}
\right|
\leq
\exp\left(-\Omega\left( \frac{R^2}{u} \Delta u \right) \right)
=\exp\left(-\Omega\left( (\Delta u)^{1/3} \right) \right).
  \label{eq:Kez_tail2}
\end{equation}
Together with \eqref{eq:Kez_loc_R2}, this gives conclusion (ii) of the Lemma.

We now turn to the proof of conclusion (iii).  Here we will need to pick a different function $R(u).$  Let $\zeta^R = \zeta(w,u)\one[|w|<R],$ and define $\tilde{\phi}^{R,\zeta}$ by
  \begin{equation}
    \label{eq:phi_RZ}
    {\tilde \phi}^{R,\zeta}(\mathcal{H})
    =
\frac{1}{\pi} 
\int\limits_{\R}
\exp\left(-\zeta^R + iw\mathcal{H}
-\frac{\Delta u}{2}\left(\frac{w}{\Re \tau}\right)^2\right)
\, dw,
  \end{equation}
  noting we have omitted the $e^{\xi}$ from the expression.    This has the form of a Fourier transform of a product of functions evaluated at $\mathcal{H}$.  In particular, let us define the distributions
  \begin{align}
    U(x) &= 2\pi\delta_{0}(x) + \mathcal{F}^{-1}\left[ \bigl( e^{\zeta(\cdot,u)} - 1 \bigr)\one[|\cdot| \leq R] \right](x) \\
    V(x) &= \sqrt{ \frac{(\Re \tau)^2}{2\pi \Delta u}}e^{ -\frac{(\Re \tau)^2}{2\Delta u} x^2}.
    \label{eq:Fourier}
  \end{align}

  This allows us to rewrite \eqref{eq:phi_RZ} as
  \begin{equation}
    {\tilde \phi}^{R,\zeta}
    =
    2\mathcal{F}^{-1}[\mathcal{F}[U]\mathcal{F}[V]](\mathcal{H})
    =
    2(U * V)(\mathcal{H}).
    \label{eq:phi_RZ2}
  \end{equation}

  Let $\phiopRZ$ be the operator from $L^2(\mu_2) \to L^2(\mu_1)$ with kernel ${\tilde \phi}^{R,\zeta}$ defined in the same way as in \eqref{eq:phiop}. 

  Let $q$ be a Schwartz function and consider the action of $\phiopRZ$ on it
  %${\tilde \phi}^{R,\zeta}$ on it.  
  We have that
  \begin{align*}
    %\int_\R {\tilde \phi}^{R,\zeta}(u_1,\yM[1];u_2,\yM[2])q(\yM[2])\,\frac{d\yM[2]}{\sqrt{2}u_2^{1/6}}
    \phiopRZ[q](\yM[1])
    &= 
    2
    \int_\R(U * V)\biggl(\frac{\sqrt{2}\yM[2]}{u_2^{1/6}} - \frac{\sqrt{2}\yM[1]}{u_1^{1/6}}\biggr)q(\yM[2])\,\frac{\sqrt{2}d\yM[2]}{u_2^{1/6}} \\
    &= 
    2(U * V * \tilde{q})\left(  \frac{\sqrt{2}\yM[1]}{u_1^{1/6}} \right),
  \end{align*}
  where $\tilde{q}(x) = q\left(  \frac{u_2^{1/6}}{\sqrt{2}} x \right).$
  
  By considering the Fourier transform, we have that %for all Schwartz functions $q$ 
  \begin{align*}
    \|(V * \tilde{q})(\yM[1]) - \tilde{q}(\yM[1])\|_{L^2(d\yM[1])} 
    &=
    \frac{1}{\sqrt{2\pi}}
    \|(2\pi\mathcal{F}[V](x)-1)\mathcal{F}[\tilde{q}](x)\|_{L^2(dx)} \\
    &\leq
    \sqrt{2\pi}
    \frac{\Delta u}{2(\Re \tau)^2}\|x^2\mathcal{F}[\tilde{q}](x)\|_{L^2(dx)} \\
    &=
    O\bigl(\frac{\Delta u}{u}\bigr)\| \tilde{q}''(\yM[1])\|_{L^2(d\yM[1])}.
    %\label{eq:H2bound}
  \end{align*}
  Hence by adjusting constants, we have that 
  \begin{align}
    \| V * {\tilde q}(\tfrac{\sqrt{2} \yM[1]}{u_1^{1/6}}) - q(\tfrac{u_2^{1/6} \yM[1]}{u_1^{1/6}})\|_{L^2(mu_1(d\yM[1]))}
    &=
    \| V * {\tilde q}(\tfrac{\sqrt{2} \yM[1]}{u_1^{1/6}}) - {\tilde q}(\tfrac{\sqrt{2}\yM[1]}{u_1^{1/6}})\|_{L^2(\mu_1(d\yM[1]))} \nonumber \\
    &= \| V * {\tilde q}(x) - {\tilde q}(x)\|_{L^2(dx)} \nonumber \\
   % &= \| V * {\tilde q}(x) - {\tilde q}(x)\|_{L^2(dx)} \nonumber \\
    &\leq O\bigl(\frac{\Delta u}{u}\bigr)\| \partial_x^2 \tilde{q}(x)\|_{L^2(dx)}\nonumber \\
    &= O\bigl(\frac{\Delta u}{u^{2/3}}\bigr)\| {q}''(\tfrac{u_2^{1/6}x}{\sqrt{2}})\|_{L^2(dx)}\nonumber \\
    &= O\bigl(\frac{\Delta u}{u^{2/3}}\bigr)\| {q}''(\yM[2])\|_{L^2(\mu_2(d\yM[2]))}
    \label{eq:H2boundq}
  \end{align}
  Meanwhile, setting 
  $\alpha = \tfrac{u_2^{1/6}}{u_1^{1/6}}$ so that $\alpha - 1 = O(\Delta u/u),$ we have that
  for $|\yM[i]| \leq u^\beta,$ 
  \begin{align}
    \|\one[|\yM[1]| \leq u^{\beta}]|q(\yM[1]) -  q(\tfrac{u_2^{1/6} \yM[1]}{u_1^{1/6}})|\|^2_{L^2(\mu_1(d\yM[1])}
    &\leq
    \int\limits_{-u^{\beta}}^{u^{\beta}}
    \biggl(
    \int\limits_{\yM[1]}^{\alpha \yM[1]}
    q'(x)\,dx
    \biggr)^2
\mu_1(d\yM[1])
    \nonumber \\
    &\leq
    \int\limits_{-u^{\beta}}^{u^{\beta}}
    (\alpha - 1)\yM[1]
    \int\limits_{\yM[1]}^{\alpha \yM[1]}
    \bigl(
    q'(x)
    \bigr)^2
\,dx\,\mu_1(d\yM[1]).
    \nonumber \\
    \intertext{Changing the order of integration and estimating,}
\|\one[|\yM[1]| \leq u^{\beta}]|q(\yM[1]) -  q(\tfrac{u_2^{1/6} \yM[1]}{u_1^{1/6}})|\|^2_{L^2(\mu_1(d\yM[1])}
    &\leq
    \int\limits_{-\alpha u^{\beta}}^{\alpha u^{\beta}}
    \bigl(
    q'(x)
    \bigr)^2
    \int\limits_{x}^{\alpha x}
    (\alpha - 1)\yM[1]\,\mu_1(d\yM[1])
    \,dx
    \nonumber \\
    &\leq
    \int\limits_{-\alpha u^{\beta}}^{\alpha u^{\beta}}
    O(\alpha - 1)^2
    x^2
    \bigl(
    q'(x)
    \bigr)^2\,\mu_1(dx)
    \nonumber \\
    &\leq O\bigl(\frac{\Delta u}{u^{1-\beta}}\bigr)^2\|q'(\yM[2])\|^2_{L^2(d\mu_2)}.
    \label{eq:annoyed}
  \end{align}
  Finally, as we have that $(\Delta u)R^4/u^2 = o(1)$ it follows that
  \begin{align}
    \nonumber
    \|((U - 2\pi\delta_0) * q)(x)\|_{L^2(dx)}
    &=
    \|\bigl(e^{\zeta(x,u)}-1\bigr)\one[|x| \leq R]\mathcal{F}[q](x)\|_{L^2(dx)} \\
    \nonumber
    &\leq O\bigl(\frac{\Delta u R^4}{u^{2}}\bigr) \|\mathcal{F}[q](x)\|_{L^2(dx)} \\
    \nonumber
    &= O\bigl(\frac{\Delta u R^4}{u^{2}}\bigr) \|q(x)\|_{L^2(dx)}.
\intertext{
	We will take $R = u^{1/2+\delta/4}/\sqrt{\Delta u},$ so that }
    \|((U - 2\pi\delta_0) * q)(x)\|_{L^2(dx)}
    &= O\bigl(\frac{u^{\delta}}{\Delta u}\bigr) \|q(x)\|_{L^2(dx)}
    = O\bigl(\frac{\Delta u}{u^{2/3}}\bigr) \|q(x)\|_{L^2(dx)}.
    \label{eq:goawayU}
  \end{align}

  Combining \eqref{eq:H2boundq}, \eqref{eq:annoyed}, and \eqref{eq:goawayU}, we have that for all $q$ in $H^2,$
  \begin{multline}
	  %\| \one[|\yM[1]| \leq u^{\beta}]|\frac{1}{2\pi}(U * V * {\tilde q})(\sqrt{2}\yM[1]/u_1^{1/6}) - {q}(\yM[1])|\|_{L^2(d\mu_1)} \\
    \| \one[|\cdot| \leq u^{\beta}]|\phiopRZ[q](\cdot) - {q}(\cdot)|\|_{L^2(d\mu_1)} \\
    \leq 
O\bigl(\frac{\Delta u}{u^{2/3}}\bigr)
\left( 
\| q\|_{L^2(d\mu_2)}
+\| q'\|_{L^2(d\mu_2)}
+\| q''\|_{L^2(d\mu_2)}
\right).
    \label{eq:phi_RZ3}
  \end{multline}

  We now proceed to compare the action of ${\tilde \phi}^{R,\zeta}$ with that of ${\tilde \phi}.$  Following a similar progression as taken in deriving \eqref{eq:Kez_tail2} from \eqref{eq:Kez_tail}, we have \emph{uniformly} in $\yM[i]$ that
  \begin{equation*}
   \left|
   e^{-\xi}
   {\tilde \phi}
-
{\tilde \phi}^{R,\zeta}
\right|
\leq
\frac{1}{\pi}
\int\limits_R^\infty
\left| 1+ \frac{iw}{\Re \tau}\right|^{-\Delta u}
dw
\leq
\exp\left(-\Omega\left( \frac{R^2}{u} \Delta u \right) \right)
=
\exp(-\Omega(u^{\delta/2})).
%\label{eq:phi_RZ_tail}
  \end{equation*}
  In particular, we can use this pointwise bound to give an estimate on the Hilbert-Schmidt norm of the difference of these kernels restricted to $\yM[i] > -u^{\beta}$ by
  \begin{align}
    \iint_{\yM[i] > -u^{\beta}}
    &
    \left|
    {\tilde \phi}(u_1,\yM[1];u_2,\yM[2])
    -
    e^{\xi(u_1,\yM[1];u_2,\yM[2])}
    {\tilde \phi}^{R,\zeta}(u_1,\yM[1];u_2,\yM[2])
    \right|^2\,
    \mu_1(d\yM[1])
    \mu_2(d\yM[2])  \nonumber \\
    &\leq
    e^{-\Omega(u^{\delta/2})}
    \|\one[\yM[i] > -u^{\beta},i=1,2] 
    e^{\xi(u_1,\yM[1];u_2,\yM[2])}
    \|^2_{L^2(\mu_1(\yM[1])\times \mu_2(\yM[2]))} \nonumber \\
    &\leq
    e^{-\Omega(u^{\delta/2})}
    e^{O(\log u)}.
\label{eq:phi_RZ_tail0}
  \end{align}
 Define
 \[
   \xi_2(\yM[2]) = -\frac{\sqrt{u_2} - \sqrt{u_1}}{2}
\frac{\yM[2]}{u_2^{1/6}},
 \]
 and let $\xi_1(\yM[1]) = \xi(u_1,\yM[1];u_2,\yM[2])-\xi_2(\yM[2]),$ noting that the right hand side is independent of $\yM[2].$

 By \eqref{eq:phi_RZ_tail0} we have that for any $q \in L^2(\mu_1)$ supported on $[-u^{\beta},\infty)$
\begin{equation}
   \left\|
%      ({\tilde \phi}(u_1, \cdot ; u_2, \cdot)-
%      (e^{\xi}{\tilde \phi}^{R,\zeta})(u_1, \cdot ; u_2, \cdot))q(\cdot)
  \phiop[q](\cdot)-
  e^{\xi_1(\cdot)}\phiopRZ[e^{\xi_2(\cdot)}q](\cdot)
%      (e^{\xi}{\tilde \phi}^{R,\zeta})(u_1, \cdot ; u_2, \cdot))q(\cdot)
      \right\|_{L^2(d\mu_1)}
      \leq 
    e^{-\Omega(u^{\delta/2})}
    \|q\|_{L^2(d\mu_2)}.
\label{eq:phi_RZ_tail}
\end{equation}
For $|\yM[1]| < u^\beta,$ $|\xi_1(\yM[1])| = O(1).$ 
Combining this observation with \eqref{eq:phi_RZ3}, we get
\begin{align}
\bigl\|
e^{\xi_1(\yM[1])}&\one[|\yM[1]| \leq u^\beta]
\left(\phiopRZ[e^{\xi_2(\cdot)}q](\yM[1])
  -e^{\xi_2(\yM[1])}q(\yM[1])\right)
  \bigr\|_{L^2(\mu_1(d\yM[1]))} \nonumber\\
  &\leq
  e^{O(1)}
\left\|
\one[|\yM[1]| \leq u^\beta]
\left(\phiopRZ[e^{\xi_2}q](\yM[1])
  -e^{\xi_2(\yM[1])}q(\yM[1])\right)
  \right\|_{L^2(\mu_1(d\yM[1]))} \nonumber \\
&\leq
O\left( \frac{\Delta u}{u^{5/6}} \right)
\left\|
e^{\xi_2}q
\right\|_{H^2}.
\label{eq:phi_xi_RZ}
\end{align}
%where 
%\(
%\|f\|_{H^2} = 
%\|f(x)\|_{L^2(dx)}
%+\|f'(x)\|_{L^2(dx)}
%+\|f''(x)\|_{L^2(dx)}.
%\)
Observe that for $q$ supported on $[-u^\beta, \infty),$ we have that $\|e^{\xi_2(x)}q(x)\|_{H^2} = O(1)\|q(x)\|_{H^2}.$

  It remains to compare $q$ with $e^{\xi_2}q.$
Using that $\xi_2(\yM[1])=o(1)$ for $|\yM[1]| < u^{\beta},$ there is a constant $C>0$ so that for all these $\yM[i],$ $|e^{\xi_2(\yM[1])}-1| \leq C\frac{\Delta u}{u^{2/3}}|\yM[1]|.$ Hence
  \begin{align}
 \bigl\|
\one[|\yM[1]| \leq u^\beta]
&
\left(
e^{\xi_2(\yM[1])}
-1
\right)
q(\yM[1])
  \bigr\|_{L^2(\mu_1(d\yM[1]))} 
%  \nonumber \\
%  &
  \leq
  O\left( \frac{\Delta u}{u^{2/3-\beta}} \right)
   \bigl\|q\bigr\|_{L^2(\mu_1)}. 
    \label{eq:exiq_q}
  \end{align}
Combining \eqref{eq:exiq_q}, \eqref{eq:phi_xi_RZ}, and \eqref{eq:phi_RZ_tail}, conclusion (iii) follows.

For conclusion (iv), note that this does not directly follow from (iii), as $f = (\Id - \Laplacian)^{-1}g$ generally has full support.  However, for $g$ that is supported on $[0,\infty),$ $f$ will be exponentially small on $(-\infty, -u^{\beta}],$ and so the conclusion will follow from this and (i).

To this end, let $\rho \in C^{\infty}$ be an increasing function that is $0$ on $(-\infty,-u^{\beta}]$ and $1$ on $[-u^{\beta} +1,\infty).$  Then by (iii), we have
	\begin{equation}
	\left\|
      \one[ |\cdot| \leq u^{\beta} ]
      (\phiop - \Id)[\rho f](\cdot)
      \right\|_{L^2(\mu_1)}
      \leq C\tfrac{\Delta u}{u^{2/3-\beta}}
      \|g\|_{L^2(\mu_2)}.	
		\label{eq:rhof}
	\end{equation}

	On the other hand,
	\begin{align}
		\sup_{\yM[2] \in \R}
		\left| (1-\rho)(\yM[2])f(\yM[2])\right|
		&\leq
		\sup_{\yM[2] \leq -u^{\beta}+1}
		\left|f(\yM[2])\right| 
		\nonumber \\
		&\leq
		\sup_{\yM[2] \leq -u^{\beta}+1}
		c\int_\R e^{-|\yM[2]-\wM|}
		|g(\wM)|\,d\wM
		\nonumber \\
		&\leq
		O(e^{-u^{\beta}/2})
		\sup_{\yM[2] \leq -u^{\beta}+1}
		c\int_\R e^{-|\yM[2]-\wM|/2}
		|g(\wM)|\,d\wM
		\nonumber \\
		&=
		O(e^{-u^{\beta}/2})
		\|g\|_{L^2},
		\label{eq:1rhof}
	\end{align}
	where in the last step we have applied H\"older's inequality.  Hence by conclusion (i) and \eqref{eq:1rhof}, we conclude
	\begin{equation}
	\left\|
      \one[ |\cdot| \leq u^{\beta} ]
      (\phiop - \Id)[(1-\rho) f](\cdot)
      \right\|_{L^2(\mu_1)}
      = e^{O(\log u)-u^{\beta}/2}
      \|g\|_{L^2(\mu_2)}.	
		\label{eq:1rhof2}
	\end{equation}
	Combining \eqref{eq:rhof} and \eqref{eq:1rhof2}, the conclusion follows.

\end{proof}

\subsection{Proof of correlation proposition}
\begin{proof}[Proof of Proposition~\ref{prop:rtcorrelation}]
	The domain $I$ is given in $\yM[i]$ coordinates by
\[
  I = 
\left\{ u_1 \right\} \times [t_1, \infty) \cup
\left\{ u_2 \right\} \times [t_2, \infty).
\]
Define a new domain $I'$ by
\[
  I' = 
  \left\{ u_1 \right\} \times [t_1,u^{\beta}/2] \cup
\left\{ u_2 \right\} \times [t_2, \infty).
\]
Then we have that
\begin{align*}
	|\det(\Id - {\tilde K}_{I})
	-\det(\Id - {\tilde K}_{I'})|
	&\leq \Pr\left[ \lev[u_1] \geq u^{\beta}/2\right] \\
	&=O(e^{-\Omega(u^{3\beta/2})}).
\end{align*}
As the bound we produce on $F(u_1,t_1 ; u_2, t_2)$ for the range of $t_i$ we consider decays no faster than some power of $u,$ we may instead consider bounding
\[
	F(u_1,t_1 ; u_2, t_2)' = |\det(\Id - {\tilde K}_{I'})-\Pr\left[ \lev[u_2] \geq t_2\right]|.
\]

Recall \eqref{eq:blocks}.  Subtract a left multiple $\pi_1'\phiop$ of the second row from the first, and then apply the Schur complement formula.  This gives the identity
\begin{equation}
	\det(\Id - {\tilde K}_{I'})
	=
	\det(\Id - \pi_2{\tilde K}(u_2,\cdot;u_2,\cdot)\pi_2)
	\det(\Id - \Dop_1 + \Dop_2 \Rop \Mop),
	\label{eq:Schur}
\end{equation}
where the operators $\Dop_1 : L^2(\mu_1) \to L^2(\mu_1),$ $\Dop_2:L^2(\mu_2) \to L^2(\mu_1),$ $\Mop: L^2(\mu_1) \to L^2(\mu_2)$ and $\Rop : L^2(\mu_2) \to L^2(\mu_2)$ are given by
\begin{align*}
\Dop_1 &=
\pi_1'
\left(
\phiop
\pi_2
{\tilde K}(u_2,\cdot;u_1,\cdot)
-
{\tilde K}(u_1,\cdot;u_1,\cdot)
\right)\pi_1' \\
%\Mop_1 &\longleftrightarrow
%\pi_1
%\left( {\tilde \phi}(u_1,\yM[1];u_2,\yM[2])
%-\delta( \yM[2] - \yM[1])
%+
%{\tilde K}(u_2,\yM[1];u_2,\yM[2])
%-{\tilde K}(u_1,\yM[1];u_2,\yM[2])
%)
% \right) \\
\Dop_2 &=
\pi_1'
\left(
\phiop
\pi_2
{\tilde K}(u_2,\cdot;u_2,\cdot)
-
{\tilde K}(u_1,\cdot;u_2,\cdot)
\right)\pi_2 \\
 \Mop &=
 \pi_2{\tilde K}(u_2,\cdot;u_1,\cdot) \pi_1' \\
\Rop &= (\Id - \pi_2{\tilde K}(u_2,\cdot;u_2,\cdot) \pi_2)^{-1}.
\end{align*}
As a consequence, we may bound
\[
	F(u_1,t_1 ; u_2, t_2)'
	\leq
	\Pr\left[ \lev[u_2] \geq t_2\right]|\det(\Id - \Dop_1 + \Dop_2\Rop \Mop)-1|,
\]
and so we turn to estimating the difference of this determinant with $1$.
%\pi_2{\tilde K}(u_2,\cdot;u_2,\cdot)\pi_2)^{-1}.$

Let $\nuclear$ denote the nuclear norm.
For any nuclear operators $A$ and $B,$
\[
	|\det(\Id+A)-\det(\Id+B)|
	\leq \nuclear[A-B]e^{1+\nuclear[A] + \nuclear[B]}
\]
(see \cite[(3.7)]{Simon}).
Hence we have the bound
\begin{equation}
|\det(\Id - \Dop_1 + \Dop_2\Rop \Mop) -1|
=O( \nuclear[\Dop_1] +
\nuclear[\Dop_2]
\opnorm[\Rop]
\nuclear[\Mop]),
\label{eq:Schurpieces}
\end{equation}
provided the $\nuclear[-\Dop_1 + \Dop_2 \Rop \Mop]$ is uniformly bounded.  Here we have used the H\"older inequality for Schatten norms and the bound $\opnorm \leq \nuclear.$  

For $\Rop,$ from Lemma~\ref{lem:quant_Airy}, we have 
\(
\nuclear[\Mop_3] 
= O(e^{-\tfrac 4 3 (t_2)^{3/2}}),
\)
and hence 
\begin{align}
\opnorm[\Rop]
&\leq \frac{1}{1-
	\opnorm[\pi_2{\tilde K}(u_2,\cdot;u_2,\cdot) \pi_2]} \nonumber \\
&\leq \frac{1}{1-
	\nuclear[\pi_2{\tilde K}(u_2,\cdot;u_2,\cdot) \pi_2]} \nonumber \\
&\leq 1 + O(e^{-\tfrac 4 3 (t_2)^{3/2}})
	\label{eq:Rop}
\end{align}
For $\Mop,$ from Lemma~\ref{lem:quant_Airy}, we have
\begin{equation}
	\nuclear[\Mop]
	=O\left(
	e^{-\tfrac{2}{3}((t_1)^{3/2}+(t_2)^{3/2})}
	\right).
	\label{eq:Mop}
\end{equation}

The main work is to estimate the nuclear norms of $\Dop_1$ and $\Dop_2.$  We give the proof for $\Dop_1.$  The proof for $\Dop_2$ follows from an identical argument.
We begin by writing
\[
\Dop_1 =
\Dop_1'
+
\pi_1'
\left(
{\tilde K}(u_2,\cdot;u_1,\cdot)
-
{\tilde K}(u_1,\cdot;u_1,\cdot)
\right)\pi_1', 
\]
where
\[
\Dop_1'
=
\pi_1'
(\phiop - \Id)
\pi_2
{\tilde K}(u_2,\cdot;u_1,\cdot)\pi_1'.
\]
Then by Lemma~\ref{lem:Airy_difference}, we have
\begin{equation}
	\nuclear[\Dop_1-\Dop'_1]
	=O\left(e^{-\tfrac{4}{3}(t_1)^{3/2}}\right).
	\label{eq:DopDop}
\end{equation}

Let $\rho \in C^\infty$ be an increasing function which is $0$ on $(-\infty,t_2-1]$ and which is $1$ on $[t_2, \infty).$ We can clearly choose $\rho$ so that its derivatives are bounded independently of $t_2.$  We now divide $\Dop_1' = \Dop_1'' + \Dop_1'''$ where
	\begin{align*}
		\Dop_1''&=
\pi_1'
(\phiop - \Id)
\rho
{\tilde K}(u_2,\cdot;u_1,\cdot)\pi_1', \\
\Dop_1'''
&=
\pi_1'
(\phiop - \Id)
(\pi_2-\rho)
{\tilde K}(u_2,\cdot;u_1,\cdot)\pi_1'.
	\end{align*}

For $\Dop_1''$, we begin by applying Lemma \ref{lem:phi_identity}, part (iv), to conclude that
\begin{align}
	\nuclear[\Dop_1'']
	&\leq C\frac{\Delta u}{u^{2/3-\beta}}\nuclear[(\Id - \Laplacian)\rho {\tilde K}(u_2,\cdot;u_1,\cdot)\pi_1']. \nonumber\\
	\intertext{Applying Lemma~\ref{lem:quant_Airy} and using the boundedness of the derivatives of $\rho,$ we have}
	\nuclear[\Dop_1'']
	&\leq C\frac{\Delta u}{u^{2/3-\beta}}
	e^{-\tfrac{2}{3}((t_1)^{3/2}+(t_2)^{3/2})}. 
	\label{eq:Dop2}
\end{align}

Define $\pi_2'$ to be the restriction operator to the interval $[t_2 -1,t_2],$ and note that $\pi_2' (\pi_2 - \rho) = \pi_2  - \rho = (\pi_2 - \rho)\pi_2'.$  Also observe that $\pi_1'(\pi_2-\rho) = 0.$  Hence we may write
\begin{align}
	\nuclear[\Dop_1''']
	&=
\nuclear[
\pi_1'
\phiop
(\pi_2-\rho)
{\tilde K}(u_2,\cdot;u_1,\cdot)\pi_1'
] \nonumber \\
&\leq
\opnorm[
\pi_1'
\phiop
\pi_2']
\opnorm[
(\pi_2-\rho)
]
\nuclear[
\pi_2'
{\tilde K}(u_2,\cdot;u_1,\cdot)\pi_1'
]
\label{eq:Dop3}
\end{align}
The operator norm of $\pi_2 - \rho$ is at most $1$, and the nuclear norm of the ${\tilde K}$ term can be controlled using Lemma~\ref{lem:quant_Airy}.  It just remains to estimate the operator norm of $\pi_1'\phiop\pi_2'$.

By Lemma~\ref{lem:phi_identity} part (ii), we have a pointwise estimate on the kernel of $\pi_1'\phiop\pi_2$, given by
\[
	|{\tilde \phi}( u_1, \yM[1] ; u_2, \yM[2])|
\leq 
C 
\sqrt{\frac{u}{\Delta u}}
\exp\left(
-\frac{u^{2/3}}{2\Delta u}\left(\yM[1] - \yM[2]\right)^2
\right)
+C\exp\left( -\frac{(\Delta u)^{1/3}}{C} \right)
\]
As we are working on a domain of $\yM[i]$ for which $|\yM[i]| < u^{\beta},$the contribution of the $O(e^{-\Omega( (\Delta u)^{1/3})})$ term to the operator norm of $\pi_1'\phiop\pi_2'$ is still $O(e^{-\Omega( (\Delta u)^{1/3})}),$ which can be seen by computing a Hilbert-Schmidt norm.  Hence we can estimate
\[
  \|\pi_1'\phiop\pi_2'[f]\|_{L^2}
	\leq
	C\|\phiop'[|f|]\|_{L^2}
+O(e^{-\Omega( (\Delta u)^{1/3})})\|f\|_{L^2},
\]
where $\phiop'$ is the convolution operator
\[
	\phiop'[f]
	=
	\sqrt{\frac{u^{2/3}}{2\pi\Delta u}}
	e^{-\frac{u^{2/3}}{2\Delta u}\left(\cdot \right)^2}
	\one[\cdot \geq t_1 - t_2]
	* f.
\]
By Young's inequality the $L^2 \to L^2$ operator norm of $\phiop'$ is just given by its $L^1$ norm.  Hence
\[
	\opnorm[\phiop'] \leq \Pr[ Z >  u^{1/3}(\Delta u)^{-1/2} \Delta t],
\]
where $Z$ is a standard normal variable.  Hence we have shown that
\begin{equation}
	\opnorm[
\pi_1'
\phiop
\pi_2']
\leq C\Pr[ Z >  u^{1/3}(\Delta u)^{-1/2} \Delta t]
+ O(e^{-\Omega( (\Delta u)^{1/3})}).
	\label{eq:Z}
\end{equation}
Combining this equation with \eqref{eq:Dop3}, \eqref{eq:Dop2}, and \eqref{eq:DopDop} we have
\[
	\nuclear[\Dop_1]
	\leq
	C
	\left[\frac{\Delta u}{u^{2/3-\beta}}
+
\Pr[ Z >  u^{1/3}(\Delta u)^{-1/2} \Delta t]
\right]
	e^{-\tfrac{2}{3}((t_1)^{3/2}+(t_2)^{3/2})}
\]
The same argument shows the same bound for $\Dop_2.$  Hence, combining these bounds with \eqref{eq:Schurpieces}, \eqref{eq:Rop}, and \eqref{eq:Mop}, the proof is complete. 
\end{proof}

\section{Sharp uniform estimates of $\tilde K$ in the right tail}
\label{sec:sharp}
In this section we give some sharp estimates relevant to the right tail of the largest eigenvalue distribution.  
%As in the previous section, we will assume $u_2 \geq u_1$ and write $u$ for $u_1$ and $\Delta u = u_2 - u_1.$  
Our first estimate is a bound on the nuclear norm of the derivatives of ${\tilde K}.$  In the case $u_1=u_2,$ these are standard, and the bound here is a small extension of them.
\begin{lemma}
  For each $\delta>0$ and for each integer $\ell \geq 0,$ there is a constant $C>0$ so that for all $u_1,u_2 \in \mathbb{N}$ satisfying $|u_2 - u_1| = O(u_1^{2/3-\delta}),$
  and all $t_1,t_2 > 1$
\[
\left\|
	\pi_1 \partial_{\yM[1]}^\ell \frac{{\tilde K}(u_1,\yM[1];u_2,\yM[2])}{\sqrt{2}u_2^{1/6}} \pi_2
	\right\|_{\nu} \\%_{\nu(L^2(d\yM[2]) \to L^2(d\yM[1]))} \\
	\leq Ct_1^{\ell}\xi'(u_1,t_1) \xi'(u_2,t_2)
\]
  where
  \[
    \xi'(u_i, t_i) = C(e^{-\tfrac 23 t_i^{3/2}}
    +e^{-u_i^{1/12}t_i/C}).
  \]
\label{lem:quant_Airy}
\end{lemma}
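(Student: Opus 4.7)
The plan is to represent $\odK$ as a continuous integral of rank-one operators via an exponential representation of $(z_1-z_2)^{-1}$, and then bound each factor in $L^2$ by steepest-descent analysis on the $\yM[i]$- and $s$-dependent minimum-phase contours promised at the start of Section~\ref{sec:sharp}.

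On the contours $\gamma_1^c,\gamma_2^c$ one has $\Re z_1 > \Re z_2$, so we may substitute $(z_1-z_2)^{-1} = \int_0^\infty e^{-s(z_1-z_2)}\,ds$ into \eqref{eq:hermite3}. The $z_1$- and $z_2$-integrals then decouple, giving
\begin{equation*}
\odK(u_1,\yM[1];u_2,\yM[2]) = \int_0^\infty A_s(\yM[1])\,B_s(\yM[2])\,ds,
\end{equation*}
where $A_s,B_s$ are single-contour integrals over $\gamma_1^c,\gamma_2^c$ obtained from the $z_1$- and $z_2$-integrands of \eqref{eq:hermite3} with an extra factor $e^{\mp sz_i}$. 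The same decomposition survives $\partial_{\yM[1]}^\ell$, which acts only on $A_s$ and brings down a polynomial in $z_1$ of degree $\ell$. Since a rank-one operator $f \otimes g$ has nuclear norm $\|f\|_{L^2}\|g\|_{L^2}$ and the nuclear norm is subadditive under Bochner integrals, we obtain
\begin{equation*}
\left\|\tfrac{1}{\sqrt{2}u_2^{1/6}}\pi_1\,\partial_{\yM[1]}^\ell \odK\,\pi_2\right\|_\nu
\leq \int_0^\infty \bigl\|\pi_1\,\partial_{\yM[1]}^\ell A_s\bigr\|_{L^2(d\yM[1])}\,\bigl\|\pi_2 B_s\bigr\|_{L^2(d\mu_2)}\,ds.
\end{equation*}

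The main work is then the pointwise steepest-descent estimate of $A_s$ and $B_s$. Substituting the scaled variables \eqref{eq-scaling} and the expansion \eqref{eq:taylor}, the effective phase for $A_s$ in $\zM[1]$ is approximately $\zM[1]^3/3 - \zM[1](\yM[1]+\tilde s)$ with $\tilde s = s u_1^{-1/6}/\sqrt{2}$; the relevant saddle lies near $\zM[1]\approx\sqrt{\yM[1]+\tilde s}$, and the steepest-descent evaluation through this saddle on the minimum-phase contour yields an Airy-type bound $|A_s(\yM[1])| \lesssim e^{-\tfrac{2}{3}(\yM[1]+\tilde s)^{3/2}}$ up to polynomial corrections, valid as long as $\yM[1]+\tilde s = o(u_1^{2/3})$. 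In the complementary cliff regime the saddle leaves the neighbourhood in which the Taylor expansion of $\G[1]$ is accurate, and the dominant contribution collects on the straight tail $\gamma_1^e$, producing the secondary exponential $e^{-u_1^{1/12}\yM[1]/C}$ that accounts for the second summand of $\xi'(u_1,t_1)$. The factor of $z_1^\ell$ from $\partial_{\yM[1]}^\ell$ contributes $O((\yM[1]+\tilde s)^{\ell/2})$ at the saddle; the crude power $t_1^\ell$ in the statement absorbs this with margin. Integrating in $\yM[1]$ over $[t_1,\infty)$ gives the $\xi'(u_1,t_1)$ factor; an identical argument bounds $\|\pi_2 B_s\|_{L^2(d\mu_2)}$ by $C\xi'(u_2,t_2)$, and the shifted Airy exponent in $\tilde s$ makes the $s$-integral converge.

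The principal obstacle is uniformity: the steepest-descent estimates must hold simultaneously across the full range $t_i\in [1,\infty)$ (matching smoothly between the Airy and cliff regimes) and uniformly in the auxiliary parameter $s$, which forces the careful, $(\yM[i],s)$-dependent choice of minimum-phase contour. When $u_1<u_2$ one must additionally handle the residue correction $K_e$ of \eqref{eq:Ke} arising because $\gamma_1^c$ and $\gamma_2^c$ cross; $K_e$ is a single-contour integral of comparable exponential type and is controlled by an analogous (and simpler) saddle analysis. The hypothesis $|u_1-u_2|=O(u_1^{2/3-\delta})$ guarantees that the crossing point $\tau$ of \eqref{eq:tau} stays at scaled distance $\Omega(u_1^{\delta})$ above the real axis, so that the $K_e$ contribution is of strictly lower order than the main term and does not affect the stated bound.
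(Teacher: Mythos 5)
Your approach is structurally the same as the paper's: express the kernel as a ``Hankel'' composition of two single-contour operators via the identity $(z_1-z_2)^{-1}=\int_0^\infty e^{-s(z_1-z_2)}\,ds$, bound the nuclear norm of the product by the product of Hilbert-Schmidt (or $L^2$) norms of the factors, and estimate each factor by steepest descent on minimum-phase hyperbola contours with a truncation parameter $R\sim u_i^{1/12}$ that produces the secondary cliff term $e^{-u_i^{1/12}t_i/C}$ in $\xi'$. That part is correct and, modulo executing the saddle estimates, matches the paper's proof of Lemma~\ref{lem:quant_Airy} faithfully.

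The gap is in the order of operations when $u_1<u_2$. You assert ``on the contours $\gamma_1^c,\gamma_2^c$ one has $\Re z_1 > \Re z_2$,'' and apply the Hankel identity there. That inequality is false when $u_1<u_2$: near the real axis, $\gamma_1^c$ is at $\Re z_1\approx\sqrt{u_1/2}$ while $\gamma_2^c$ is at $\Re z_2\approx\sqrt{u_2/2}>\sqrt{u_1/2}$, so $\Re(z_1-z_2)<0$ there, the contours cross at $\tau$, and the Bochner integral $\int_0^\infty e^{-s(z_1-z_2)}\,ds$ diverges on part of the product contour. Appending the residue correction $K_e$ does not repair this: $K_e$ accounts for the change in the double integral under contour deformation, not for the failure of the exponential identity on the crossed contours, and $\odK$ itself already requires a principal-value interpretation there. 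The paper sidesteps all of this by applying the Hankel substitution to the \emph{original} representation \eqref{eq:phi+K} (a vertical $z_1$-line strictly to the right of the $z_2$-loop, so $\Re(z_1-z_2)>0$ identically), obtaining \eqref{eq:KHankel} for the full $\phi+K$, and only afterwards deforming each single-contour factor $\tilde K_1,\tilde K_2$ to its own hyperbola independently; no crossing and hence no $K_e$ enters this lemma at all. Your proof would go through if you apply the $s$-integral before deforming, rather than after, but as written the $u_1<u_2$ case breaks at the first step.
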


The second bound is a quantitative convergence of ${\tilde K}$ to the Airy kernel.  Again, such bounds have been proven in the diagonal case.
\begin{lemma}
  For each $\delta>0$ and for each integer $\ell \geq 0,$ there is a constant $C>0$ so that for all $u_1,u_2 \in \mathbb{N}$ satisfying $|u_2 - u_1| = O(u_1^{2/3-\delta}),$ and all $t_1,t_2 > 1$
  \begin{multline*}
\left\|
\pi_1 \frac{{\tilde K}(u_1,\cdot;u_2,\cdot)}{\sqrt{2}u_2^{1/6}} \pi_2
  -
  \pi_1 {\tilde K}_{\operatorname{Airy}}(u_1,\cdot;u_2,\cdot) \pi_2
 \right\|_{\nu} \\
 \leq C\biggl(\frac{(\log u_1)^8}{u_1^{1/3}}e^{-\tfrac{2}{3}((t_1)^{3/2}+(t_2)^{3/2})} + e^{-(\log u_1)^2/C}\biggr).
\end{multline*}
\label{lem:Airy_difference}
\end{lemma}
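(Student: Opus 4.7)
The plan is to obtain this quantitative Airy convergence by a uniform saddle-point analysis of the double contour representation \eqref{eq:hermite3}, combined with a factorization of the kernels that converts pointwise estimates into nuclear-norm bounds.

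First, I would deform the $z_i$ integration contours to the steepest descent contours $\gamma_1^c,\gamma_2^c$ of Section~\ref{sec:contours} through the saddles $z_i=\sqrt{u_i/2}$. On these contours $\Re(u_i\G[i])$ attains its maximum at $\zM[i]=0$, and the Taylor expansion \eqref{eq:taylor} gives
\[
u_i\G[i] = -\zM[i]\yM[i] + \frac{1}{3}\zM[i]^3 + r_i(\zM[i],u_i),\qquad |r_i| = O\bigl(u_i^{-1/3}|\zM[i]|^4\bigr),
\]
uniformly on the contour. Truncating each contour at a disk $|\zM[i]|\le R$ with $R$ chosen polylogarithmically in $u_1$, the tail contribution is controlled by $\Re(u_i\G[i])\le -cR^3$ beyond the disk, which produces the error term $e^{-(\log u_1)^2/C}$ after converting the resulting pointwise bound on $\tilde K-\Kairy$ into the nuclear norm of $\Proj{1}(\tilde K-\Kairy)\Proj{2}$. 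The same truncation applies to $\Kairy$ via the Airy contour representation in Remark~\ref{rem:airylimit}.

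On the truncated piece, I would use the identity $1/(z_1-z_2)=\int_0^\infty e^{-s(z_1-z_2)}\,ds$ (valid once $\Re z_1>\Re z_2$ on the contours, which can be arranged by a further local deformation) to factorize
\[
\tilde K(u_1,y_1;u_2,y_2) = \int_0^\infty a^{(1)}_s(\yM[1])\,a^{(2)}_s(\yM[2])\,ds,
\]
where each $a^{(i)}_s$ is a single-contour integral in $z_i$. The same manipulation applied to the Airy double integral recalled in Remark~\ref{rem:airylimit} gives $\Kairy(y_1,y_2)=\int_0^\infty \Ai(\yM[1]+s)\Ai(\yM[2]+s)\,ds$, and on the truncated contour each $a^{(i)}_s$ differs from $\Ai(\yM[i]+s)$ by an integral weighted by $e^{r_i}-1=O(u_i^{-1/3}R^4)$. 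Subtracting and adding, $\tilde K-\Kairy$ splits into two integrals each containing one difference factor and one Airy factor. Applying H\"older for Schatten norms on each integral bounds $\nuclear[\Proj{1}(\tilde K-\Kairy)\Proj{2}]$ by sums of products of Hilbert--Schmidt norms. Each HS norm of an Airy factor contributes an $e^{-\frac{2}{3}t_i^{3/2}}$ factor after restriction to $[t_i,\infty)$, while each difference factor is uniformly of size $O(u_i^{-1/3}R^4)=O((\log u_1)^8/u_1^{1/3})$. Multiplying yields the leading error term.

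The main obstacle will be keeping both the Taylor remainder and the single-contour convergence $a^{(i)}_s\to\Ai(\cdot+s)$ uniform across the full regime $|u_2-u_1|=O(u_1^{2/3-\delta})$ and across the full range of $s$ contributing to the integral. Because the saddles $\sqrt{u_i/2}$ sit at points whose separation, after the rescaling \eqref{eq-scaling}, is of order $\Delta u/u_1^{2/3}=O(u_1^{-\delta})$, the two single-variable factors live on slightly mismatched scales; one must carefully align the contours in the factorization so that both factors converge to the \emph{same} shifted Airy function. The hypothesis $\delta>0$ is precisely what allows the mismatch to be absorbed into $r_i$, and the $u^{-1/3}$ rate in the final bound traces back to the $u_i^{-1/3}|\zM[i]|^4$ scaling of the Taylor remainder.
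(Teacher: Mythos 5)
Your overall strategy matches the paper's: pass from the double contour integral \eqref{eq:hermite3} to a factorized (``Hankel'') form $\Kop/(\sqrt{2}u_2^{1/6})=\Kop_1\cdot\Kop_2$ by writing $1/(z_1-z_2)$ as an exponential integral, truncate each single-variable integral at a polylogarithmic height $R$, compare the truncated piece to the Airy integrand via the Taylor remainder $\zeta_i'=O(u_i^{-1/3}|\zM[i]|^4)$, and then convert to a nuclear-norm bound by writing $\Kop_1\Kop_2-\Aiop\Aiop^* = (\Kop_1-\Aiop)\Kop_2+\Aiop(\Kop_2-\Aiop^*)$ and applying H\"older for Schatten norms. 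This is exactly \eqref{eq:KHankel} and \eqref{eq:nuclearHS}.

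There is, however, a genuine gap in the contour choice. You deform to the fixed contours $\gamma_1^c,\gamma_2^c$ and note that on these $\Re(u_i\G[i])$ is maximized at $\zM[i]=0$, where it equals $0$. That is precisely the obstruction: on $\gamma_i^c$ the modulus of the Airy integrand $e^{\zM[i]^3/3-\zM[i](\yM[i]+s)}$ is also maximized (and equal to $1$) at $\zM[i]=0$, so a modulus bound of the single-variable integral gives $O(1)$ and not the decay $e^{-\frac{2}{3}(\yM[i]+s)^{3/2}}$. The decay of $\Ai$ comes from cancellation, which an absolute-value estimate on a fixed contour cannot see. Consequently the Hilbert--Schmidt norm of your ``difference factor'' over $[t_i,\infty)\times[0,\infty)$ does not acquire the required $e^{-\frac{2}{3}t_i^{3/2}}$ factor; at best you get polynomial decay, and the lemma's bound does not follow. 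The paper's proof instead deforms each single-variable factor onto the $(\yM[i]+\wM)$-\emph{dependent} minimum phase hyperbolas ${\tilde h}_1,{\tilde h}_2$, which pass through the Airy saddle $\zM[i]\approx\sqrt{\yM[i]+\wM}$; on these contours the modulus of the integrand is controlled pointwise by $e^{-\frac{2}{3}(\yM[i]+\wM)^{3/2}-\frac{8}{9}(\Im\zM[i])^2(\yM[i]+\wM)^{1/2}}$ (see \eqref{eq:minphasebound}), and the difference factor inherits this decay times $O(u_i^{-1/3}R^4)$ (see \eqref{eq:AK1bulk}). A related imprecision is your claim that the difference factor is ``uniformly of size $O(u_i^{-1/3}R^4)$'': what is needed, and what the minimum phase contour delivers, is that it is $O(u_i^{-1/3}R^4)$ \emph{relative to the Airy decay}, so that its HS norm is $O(u_i^{-1/3}R^4)\,e^{-\frac{2}{3}t_i^{3/2}}$.
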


The work done for proving Lemmas \ref{lem:quant_Airy} and 
\ref{lem:Airy_difference} will allow us 
to give a quick proof of
the following uniform tail bounds for the largest eigenvalue of GUE;
these imply \eqref{eq-levuppertail}.
\begin{lemma}
  There are constants $C>0$ and $\delta > 0$ so that for all $1 \leq t \leq \delta u^{1/6}$ and all $u \in \mathbb{N},$
  \[
    \frac{1}{C}e^{-\frac{4}{3}t^{3/2}}
    \leq
    \Pr\left[ \lev[u] > t \right] 
    \leq 
    Ce^{-\frac{4}{3}t^{3/2}}.
  \]
  \label{lem:levuppertail}
\end{lemma}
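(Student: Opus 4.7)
The plan is to start from the Fredholm determinant identity $\Pr[\lev[u] \geq t] = 1 - \det(\Id - \tilde K|_I)$, where $I = \{u\} \times [t,\infty)$ in the scaled coordinates. Because the scaling carrying $K$ to $\tilde K$ is a conjugation on the diagonal $u_1 = u_2 = u$, the Fredholm determinants coincide, and the relevant operator is positive semidefinite with operator norm at most $1$.

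For the upper bound I would apply the standard inequality $|1 - \det(\Id - A)| \leq \nuclear[A]\, e^{\nuclear[A]}$ and invoke Lemma~\ref{lem:quant_Airy} with $u_1 = u_2 = u$, $t_1 = t_2 = t$, $\ell = 0$, which yields
\[
    \nuclear[\tilde K|_I] \leq C \xi'(u,t)^2 = C\bigl(e^{-\tfrac 23 t^{3/2}} + e^{-u^{1/12} t/C}\bigr)^2.
\]
If I choose $\delta$ small enough that $u^{1/12} t /C \geq \tfrac 23 t^{3/2}$ holds whenever $t \leq \delta u^{1/6}$ (that is, $\delta^{1/2} \leq 3/(2C)$), the first term dominates and $\nuclear[\tilde K|_I] \leq C' e^{-\tfrac 43 t^{3/2}}$, which immediately gives $\Pr[\lev > t] \leq C e^{-\tfrac 43 t^{3/2}}$.

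For the lower bound I would use the positivity to write $\det(\Id - \tilde K|_I) \leq e^{-\tr(\tilde K|_I)}$, whence $\Pr[\lev > t] \geq 1 - e^{-\tr(\tilde K|_I)} \geq \tfrac{1}{2}\tr(\tilde K|_I)$ as long as $\tr(\tilde K|_I) \leq \log 2$. It therefore suffices to show $\tr(\tilde K|_I) \geq (1/C) e^{-\tfrac 43 t^{3/2}}$. Combining the classical Plancherel--Rotach/Airy asymptotics
\[
    K_{\mathrm{Airy}}(y,y) = \Ai'(y)^2 - y\,\Ai(y)^2 \sim \frac{1}{8\pi y}\, e^{-\tfrac 43 y^{3/2}},
\]
which integrates to $\tr(K_{\mathrm{Airy}}|_{[t,\infty)}) \sim (16\pi t^{3/2})^{-1} e^{-\tfrac 43 t^{3/2}}$, with the nuclear-norm comparison from Lemma~\ref{lem:Airy_difference} (taking traces absorbs the error), yields the desired lower bound in the range where the error $\frac{(\log u)^8}{u^{1/3}} e^{-\tfrac 43 t^{3/2}} + e^{-(\log u)^2/C}$ is smaller than the main term.

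The main obstacle is that this direct Airy comparison works only for $t \leq (\log u)^{4/3}$ or so; once $t$ approaches the upper edge $\delta u^{1/6}$, the additive error $e^{-(\log u)^2/C}$ in Lemma~\ref{lem:Airy_difference} dominates the target. In that intermediate regime I would instead work directly on the diagonal using the contour representation of Section~\ref{sec:contours}: substituting $y_1 = y_2 = y$ into \eqref{eq:hermite3} and performing a saddle-point analysis on the deformed contours $\gamma_1^c, \gamma_2^c$ adapted to $\yM$ of size up to $\delta u^{1/6}$ (the same machinery developed in the upcoming proof of Lemma~\ref{lem:quant_Airy}) produces the matching pointwise lower bound $\tilde K(u,\yM;u,\yM) \geq (\yM)^{-1} e^{-\tfrac 43 \yM^{3/2}}/C$ uniformly in this range, which then integrates to the required bound on $\tr(\tilde K|_I)$. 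Finally, the finitely many values of $u$ below some threshold and the compact $t$-range $t \in [1, t_0]$ are handled by continuity and convergence to the Tracy--Widom law.
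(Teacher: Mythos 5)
Your proposal is correct in outline and follows essentially the same route the paper takes: Fredholm determinant, positivity of the kernel to sandwich $\Pr[\lev>t]$ between $1-e^{-\tr(\pi\Kop\pi)}$ and $\tr(\pi\Kop\pi)$, and a two-sided comparison of $\tr(\pi\Kop\pi)$ with the Airy trace via the Hankel factorization $\Kop=\Kop_1\Kop_2$. The one organizational difference is that the paper avoids your case-split (small $t$ via Lemma~\ref{lem:Airy_difference}, large $t$ via a fresh saddle-point): it simply re-runs the truncation argument of Lemma~\ref{lem:quant_Airy} with $R=\delta u^{1/12}$, which makes the additive truncation errors $e^{-Rt/C}+e^{-R^2 t^{1/2}/C}$ negligible against $e^{-\frac43 t^{3/2}}$ throughout $1\le t\le\delta u^{1/6}$; the key technical point enabling the two-sided comparison --- which your sketch gestures at but should make explicit --- is that ${\tilde K}_1^R,{\tilde K}_2^R$ are real-valued on the conjugation-invariant hyperbola contours and, after controlling $|e^{\zeta_i'}-1|=O(u^{-1/3}R^4)$, are sandwiched between $\tfrac12 A^R$ and $2A^R$, so the product in the $\wM$-integral is genuinely comparable to $(A^R)^2$ rather than merely bounded in absolute value.
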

We note that Lemma \ref{lem:levuppertail} could also 
be deduced from the uniform Plancherel-Rotach asymptotics for Hermite
polynomials contained  in \cite{Skovgaard,Sun}. 
For completeness, we provide a self-contained proof of the lemma at the 
end of this section.

Our proofs in this section are based on a different representation of $\tilde K$ than the double-contour integral formulae used in Section~\ref{sec:contours}. 
Recall from \eqref{eq:phi+K} that
we have the representation for $\phi + K:$
\begin{align*}
  \label{eq:phi+K}
       (\phi+ K)(u_1,y_1;u_2,y_2)
        &=
        \frac{2^{u_1-u_2}}{2(\pi i)^2}
        %\oint\limits_{z_2}\int\limits_{z_1}
        \oint\limits\int\limits
        \frac{e^{z_1^2 - 2z_1y_1}}{e^{z_2^2 - 2z_2y_2}}
        \frac{z_1^{u_1}}{z_2^{u_2}}
        \frac{dz_1dz_2}{z_1 - z_2} \\
        &=
        \frac{2^{u_1-u_2}}{(\pi i)^2}
        %\oint\limits_{z_2}\int\limits_{z_1}
        \oint\limits\int\limits
        \int\limits_0^\infty
        \frac{e^{z_1^2 - 2z_1(y_1+w)}}{e^{z_2^2 - 2z_2(y_2+w)}}
        \frac{z_1^{u_1}}{z_2^{u_2}}
        \,
        {dwdz_1dz_2} \\
&=
        \frac{2^{u_1-u_2}}{(\pi i)^2}
        \int\limits_0^\infty
        \oint\limits\int\limits
        %\oint\limits_{z_2}\int\limits_{z_1}
        \frac{e^{z_1^2 - 2z_1(y_1+w)}}{e^{z_2^2 - 2z_2(y_2+w)}}
        \frac{z_1^{u_1}}{z_2^{u_2}}
        \,
        {dz_1dz_2dw}. \\
	\intertext{We now scale the $w$ variable, introducing $\wM = \sqrt{2}u_1^{1/6}w.$ 
%	and changing the integration to be over $\wM.$ 
	We also recall the notation $\G[i]$ used in \eqref{eq:G}.  In terms of these variables, we have}
       (\phi+ K)(u_1,y_1;u_2,y_2)
        &=
	\frac{J(u_1,y_1)}{J(u_2,y_2)}
	\frac{1}{(\pi i)^2}
        \int\limits_0^\infty
        \oint\limits\int\limits
        %\oint\limits_{z_2}\int\limits_{z_1}
\frac{e^{u_1 \G[1]}}{e^{u_2 \G[2]}} 
        \frac{e^{-2z_1w}}{e^{-2z_2w}}
        \,
        {dz_1dz_2dw} \\
&=
	\frac{J(u_1,y_1)}{J(u_2,y_2)}
	\frac{1}{(\pi i)^2}
        \int\limits_0^\infty
        \oint\limits\int\limits
        %\oint\limits_{z_2}\int\limits_{z_1}
\frac{e^{u_1 \G[1]}}{e^{u_2 \G[2]}} 
\frac{e^{-(\zM[1] + u_1^{1/3})\wM}
\,
%{d\wM d\zM[1] d\zM[2]}
{dz_1 dz_2dw}
}{e^{-(\zM[2] + u_2^{1/3})u_1^{-1/6}u_2^{1/6}\wM}}.
\end{align*}

Hence, changing the integration to be over $\wM,$ 
we arrive at the following expression for ${\tilde K}:$
\begin{equation}
  \frac{{\tilde K}(u_1,\yM[1];u_2,\yM[2])}{\sqrt{2}u_2^{1/6}}
  =
  \frac{1}{(2\pi i)^2}
  \int\limits_0^\infty
        \oint\limits_{z_2}\int\limits_{z_1}
\frac{e^{u_1 \G[1]}}{e^{u_2 \G[2]}} 
\frac{e^{-\zM[1]\wM}
\,
{d\zM[1] d\zM[2]d\wM }
}{e^{-\zM[2]\wM + \xi_2(\zM[2],\wM,u_1,u_2)}},
  \label{eq:KHankel}
\end{equation}
where 
\begin{equation}
  \xi_2(\zM[1],\wM,u_1,u_2)
  = -\zM[2] \wM 
  %\frac{u_2^{1/6}}{u_1^{1/6}}
  ({u_2^{1/6}}{u_1^{-1/6}}-1)
  + 
  \left( 
  u_1^{1/3}
  -u_2^{1/2}u_1^{-1/6}
  \right)\wM.
  \label{eq:xiH}
\end{equation}

Recalling that $u_i \G[i] - \zM[i]\wM = u_i G_i(\zM[i],\yM[i]+\wM),$ we define
\begin{align*}
  {\tilde K}_{1}(\yM[1],\wM)
  &=
  \frac{\one[\wM \geq 0]}{2\pi i}
  \int\limits_{\tilde{\gamma}_1}
e^{u_1
G_1(\zM[1],\yM[1]+\wM)
}
%{e^{-\zM[1]\wM}}
d\zM[1], \\
  {\tilde K}_{2}(\wM,\yM[2])
  &=
  \frac{\one[\wM \geq 0]}{2\pi i}
\int\limits_{\tilde{\gamma}_2}
e^{-
u_2
G_2(\zM[2],\yM[2]+\wM)
}
e^{\xi_2(\zM[2],\wM,u_1,u_2)}
d\zM[2].
\end{align*}
The $\tilde{\gamma}_1$ contour is any vertical line for which $\Re \zM[1] > -u_1^{1/3},$ and the $\tilde{\gamma}_2$ contour is any closed loop that encloses $-u_2^{1/3}.$ 
Let $\Kop_1$ and $\Kop_2$ be the corresponding operators from $L^2(dx) \to L^2(dx),$ so that \eqref{eq:KHankel} becomes $\Kop/(\sqrt{2}u_2^{1/6}) = \Kop_1 \cdot \Kop_2.$

The estimates in this section all in a sense rely on precise comparison between ${\tilde K}_i$ and an Airy function. 
Recall that the Airy kernel has the representation
\begin{equation}
  \Kairy(\yM[1],\yM[2])
  =
\int\limits_0^\infty
\Ai(\yM[1]+\wM)
\Ai(\yM[2]+\wM)\,d\wM.
  \label{eq:AiHankel}
\end{equation}
Let $\Aiop$ be the operator with kernel $A(x,y) = \Ai(x+y)\one[y \geq 0].$  
Then $A$ has the representation
\[
  A(\yM[1],\wM)
  =
  \Ai(\yM[1]+\wM)
  = \frac{\one[\wM \geq 0]}{2\pi i} 
  \int_{\infty e^{-i\pi/3}}^{\infty e^{i \pi /3}}
  e^{\zM[1]^3/3 - \zM[1] (\yM[1]+\wM)}\,d\zM[1].
\]
The minimum phase contour for this integral is given by the hyperbola ${\tilde h}_1$
\[
  -\frac{(\Im \zM[1])^2}{3} + (\Re \zM[1])^2 = \yM[1] + \wM
\]
which is asymptotic to the contour used to define $\Ai$ as $\zM[1] \to \infty.$  On this contour we have 
\begin{align}
  \Re\left( \zM[1]^3/3 - \zM[1] (\yM[1]+\wM)\right)
  &=
  -
  \left( \yM[1]+\wM + (\Im \zM[1])^2/3 \right)^{1/2}
  \left( \frac{2(\yM[1]+\wM)}{3} + \frac{8(\Im \zM[1])^2}{9}\right) \nonumber\\
  &\leq
  -\frac{2\left( \yM[1] + \wM \right)^{3/2}}{3}
  -\frac{8(\Im \zM[1])^2 (\yM[1] + \wM)^{1/2}}{9}. \label{eq:minphasebound}
\end{align}

We will essentially use ${\tilde h}_1$ to represent ${\tilde K}_1.$  However, this is a poor choice of contour for large values of $\zM[1].$  Hence, let $R$ be a truncation parameter to be determined later, and let ${\tilde h}_1^R$ be the portion of this contour with imaginary part at most $R$ in absolute value.  Define
\[
A(\yM[1],\wM)^R
  = \frac{\one[\wM \geq 0]}{2\pi i} 
  \int\limits_{ {\tilde h}_1^R}
  e^{\zM[1]^3/3 - \zM[1] (\yM[1]+\wM)}\,d\zM[1].
\]
We will parameterize ${\tilde h}_1$ by its imaginary part, noting the arc length differential is uniformly bounded in this parameterization, so that
\begin{align}
 \left| 
A(\yM[1],\wM)
-
A(\yM[1],\wM)^R
\right|
&\leq O(1)\int_R^\infty
  e^{-\frac{2\left( \yM[1] + \wM \right)^{3/2}}{3}
  -\frac{8t^2 (\yM[1] + \wM)^{1/2}}{9}}\,dt. \nonumber \\
  &\leq \frac{O(1)}{(\yM[1] + \wM)^{1/4}}
  e^{-\frac{2\left( \yM[1] + \wM \right)^{3/2}}{3}
  -\frac{8R^2 (\yM[1] + \wM)^{1/2}}{9}}.
  \label{eq:Atrunc}
\end{align}

%\pgfplotsset{every axis/.append style={
%                    axis x line=middle,    % put the x axis in the middle
%                    axis y line=left,    % put the y axis in the middle
%                    axis line style={<->}, % arrows on the axis
%                    xlabel={$x$},          % default put x on x-axis
%                    ylabel={$y$},          % default put y on y-axis
%                    }}

% arrows as stealth fighters
%\tikzset{>=stealth}

\begin{figure}
\begin{tikzpicture}
    \begin{axis}[
		    title=${\tilde h}_2$ and ${\tilde h}_2^e$,
		    width=5cm,
		    height=8cm,
            xmin=-4,xmax=0,
        ymin=-4,ymax=4,
                    axis x line=middle,    % put the x axis in the middle
                    axis y line=right,    % put the y axis in the middle
                    axis line style={->},
		    xlabel={ $\Re{\tilde z}_2$ },          
		    xtick=\empty,
		    extra x ticks={-1},
		    extra x tick style={
			    grid=major,
			    tick label style={rotate=90,left=1cm,yshift=-0.3cm}
		    },
		    extra x tick labels={$-({\tilde y}_2+\wM)^{1/2}$},
		    %extra x tick labels={$-(\yM[2]+\wM)^{1/2}$},
		    extra y ticks={2,-2},
		    extra y tick style={
			    grid=major
		    },
		    extra y tick labels={$R$, $-R$},
		    ytick=\empty
                    ]
        \addplot [red,thick,domain=-3:3] ({-cosh(x)}, {sqrt(3)*sinh(x)});
	\addlegendentry{ ${\tilde h}_2$}
        \addplot [blue,dashed,thick,domain=-1:1,forget plot] ({-cosh(x)}, {sqrt(3)*sinh(x)});
        \addplot [blue,thick,domain=2:3.144] ({-1.52753}, {x});
        \addplot [blue,thick,domain=-3.144:-2] ({-1.52753}, {x});
        \addplot [blue,thick,domain=51.81:308.19,samples=700] ({4*cos(x)-4}, {4*sin(x)});
	\addlegendentry{ ${\tilde h}_2^e$}
        \addplot[red,dotted] expression {sqrt(3)*x};
        \addplot[red,dotted] expression {-sqrt(3)*x};
        \addplot [blue,dotted,domain=-51.81:51.81,samples=100] ({4*cos(x)-4}, {4*sin(x)});
    \end{axis}
\end{tikzpicture}
\begin{tikzpicture}
\begin{axis}[
		    title=${\tilde h}_1$ and ${\tilde h}_1^e$,
		    width=5cm,
		    height=8cm,
            xmin=0,xmax=4,
        ymin=-4,ymax=4,
                    axis x line=middle,    % put the x axis in the middle
                    axis y line=left,    % put the y axis in the middle
                    axis line style={->}, % arrows on the axis
		    xlabel={$\Re {\tilde z}_1$},          % default put x on x-axis
		    xtick=\empty,
		    extra x ticks={1},
		    extra x tick style={
			    grid=major,
			    tick label style={rotate=90,left=1cm,yshift=0.3cm}
		    },
		    extra x tick labels={$({\tilde y}_1+\wM)^{1/2}$},
		    extra y ticks={2,-2},
		    extra y tick style={
			    grid=major
		    },
		    extra y tick labels={$R$, $-R$},
		    ytick=\empty
                    ]
        \addplot [red,thick,domain=-3:3] ({cosh(x)}, {sqrt(3)*sinh(x)});
	\addlegendentry{ ${\tilde h}_1$}
        \addplot [blue,dashed,thick,domain=-1:1,forget plot] ({cosh(x)}, {sqrt(3)*sinh(x)});
        \addplot [blue,thick,domain=2:4] ({1.52753}, {x});
        \addplot [blue,thick,domain=-4:-2] ({1.52753}, {x});
	\addlegendentry{ ${\tilde h}_1^e$}
        \addplot[red,dotted] expression {sqrt(3)*x};
        \addplot[red,dotted] expression {-sqrt(3)*x};
    \end{axis}
\end{tikzpicture}
\caption{Contours used to compare ${\tilde K}_1,$ ${\tilde K}_2$ and $A.$}
\label{fig:h1h2}
\end{figure}
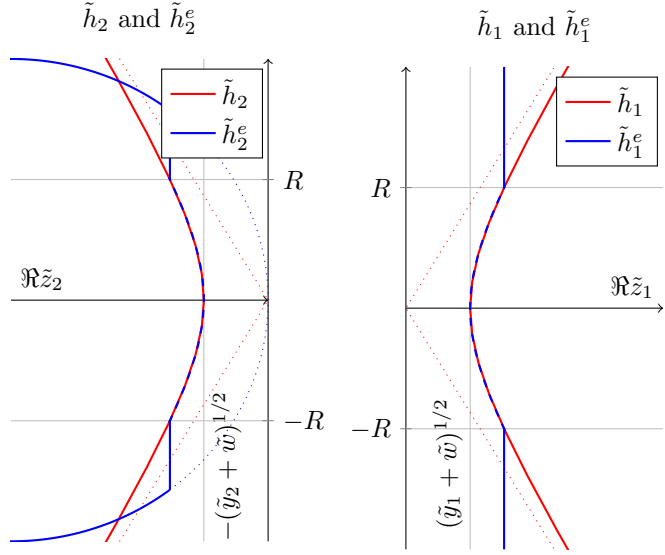

Turning to ${\tilde K}_1,$ we define a contour ${\tilde h}_1^e$ by extending ${\tilde h}_1^R$ to $\pm i\infty$ by vertical lines.  We deform the integral in the definition of ${\tilde K}_1$ to be over ${\tilde h}_1^e:$
\begin{equation}
	  {\tilde K}_1(\yM[1],\wM)
  =
  \frac{\one[\wM \geq 0]}{2\pi i}
  \int\limits_{\tilde{h}_1^e}
e^{u_1
G_1(\zM[1],\yM[1]+\wM)
}
\,d\zM[1]
	\label{eq:h1e}
\end{equation}
and we define
\[
  {\tilde K}_{1}^R(\yM[1],\wM)
  =
  \frac{\one[\wM \geq 0]}{2\pi i}
  \int\limits_{{\tilde h}_1^R}
e^{u_1
G_1(\zM[1],\yM[1]+\wM)
}
d\zM[1].
\]

We will need some estimates which are useful for large values of $\zM[1]$ to control the difference of ${\tilde K}_1$ and ${\tilde K}_1^R.$  To this end, let
\begin{equation}
  F(z) = \Re \left[ 
  \log(1+z) - z + z^2/2
  \right],
  \label{eq:F}
\end{equation}
so that $u_i\Re(\G[i]) = u_iF(u_i^{-1/3}\zM[i]) - \zM[i]\yM[i].$  In Cartesian coordinates, we have
\begin{equation}
   F(x+iy) = \frac{1}{2} \log( (1+x)^2 + y^2) -x 
  + \frac{x^2}{2}
  - \frac{y^2}{2}.
  \label{eq:Fxy}
\end{equation}
We estimate $F(x+iy),$ for $x > 0$ by
\begin{align}
  F(x+iy)
  &=
  \frac{1}{2} \log( (1+x)^2 ) 
  +\frac{1}{2} \log\left( 1 + \frac{y^2}{(1+x)^2}\right) 
  -x 
  + \frac{x^2}{2}
  - \frac{y^2}{2} \nonumber \\
  &\leq
\frac{y^2}{2}\left( \frac{1}{(1+x)^2} - 1 \right)  \nonumber \\
&\leq -\frac{xy^2}{1+x}  \label{eq:Feasy}.
\end{align}

The real parts of the endpoints of ${\tilde h}_1^R,$ are at least $R/\sqrt{3}.$
Hence on the vertical portions of the ${\tilde h}_1^e$, by \eqref{eq:Feasy}, we have
\[
  u_1\Re(G_1(\zM[1],\yM[1]))
  \leq -\frac{R(\Im \zM[1])^2}{1+Ru_1^{-1/3}} - R\yM[1]/\sqrt{3}.
\]
Ergo
\begin{align}
 \left| 
 {\tilde K}_1(\yM[1],\wM)
-
{\tilde K}_1^R(\yM[1],\wM)
\right|
&\leq O(1)\int_R^\infty
e^{-u_1\Re(G_1(\zM[1],\yM[1]+\wM))}\,d(\Im \zM[1]). \nonumber \\
&= O(R^{-1/2})e^{-\Omega(R^3)-R(\yM[1]+\wM)/\sqrt{3}},
  \label{eq:K1trunc}
\end{align}
provided we have $R=O(u_1^{1/3}).$

For ${\tilde K}_2,$ we begin by representing the Airy function using a rotated contour:
\[
  A(\wM, \yM[2])^*
  =
  \Ai(\yM[2]+\wM)
  = \frac{\one[\wM \geq 0]}{2\pi i} 
  \int_{\infty e^{-i2\pi/3}}^{\infty e^{i 2\pi /3}}
  e^{-\zM[2]^3/3 + \zM[2] (\yM[2]+\wM)}\,d\zM[2].
\]
Once again the minimum phase contour for this integral is given by a hyperbola ${\tilde h}_2$ satisfying the same equation
\[
 -\frac{(\Im \zM[2])^2}{3} + (\Re \zM[2])^2 = \yM[2] + \wM,
\]
although we now take the branch opening to the left.  Letting ${\tilde h}_2^R$ be the portion of the hyperbola with imaginary part at most $R$ and defining \( A(\wM, \yM[2])^{*R} \) to be the restriction of the integral to
${\tilde h}_2^R,$ we get exactly the same bound as \eqref{eq:Atrunc}:
\begin{align}
 \left| 
A(\yM[1],\wM)^*
-
A(\yM[1],\wM)^{*R}
\right|
  \leq \frac{O(1)}{(\yM[1] + \wM)^{1/4}}
  e^{-\frac{2\left( \yM[1] + \wM \right)^{3/2}}{3}
  -\frac{8R^2 (\yM[1] + \wM)^{1/2}}{9}}.
  \label{eq:AStartrunc}
\end{align}

Define a contour ${\tilde h}_2^e$ that extends ${\tilde h}_2^R$ first along a vertical line and then along the circle $\tilde S,$ given by $|1+u_2^{-1/3}\zM[2]| = 1-u_2^{-1/3}$
(see Figure \ref{fig:h1h2}).  On $\tilde S$, we have $\Re \zM[2] \leq -1.$
Provided we take $R = o(u_2^{1/3}),$ this is well-defined.  

For $F,$ we get from \eqref{eq:F} or \eqref{eq:Fxy} that for $\zM[2]$ on this circle,
\begin{equation}
	\label{eq:FS}
  F(u_2^{-1/3}\zM[2])
  =\log(1-u_2^{-1/3}) - \frac{(1-u_2^{-1/3})^2 - 1}{2}
  =O(u_2^{-1}).
\end{equation}
Hence we have
\begin{equation}
	\label{eq:xiS}
  -u_2\Re(G_2(\zM[2],\yM[2]))
  = O(1) + \zM[2] \yM[2] \leq O(1) - \yM[2].
\end{equation}
By \eqref{eq:FS}, and the max-modulus principle, we have for $\zM[2] \in {\tilde h}_2^e \setminus {\tilde h}_2^R$
\[
	-u_2\Re( \G[2]) \leq O(1) - R\yM[2].
\]
As for $\xi_2,$ since $\Re \zM[2] \geq -2u_2^{1/3},$ we have
\begin{align*}
  \xi_2 
  &\leq \left( 2u_2^{1/2}u_1^{-1/6} - 2u_2^{1/3} + u_1^{1/3}-u_2^{1/2}u_1^{-1/6} \right) \wM \\
  &\leq O\left( \frac{u_2-u_1}{u_1^{2/3}}\right) \wM.
\end{align*}

Applying this bound and \eqref{eq:xiS}, we get
\begin{align}
 \left| 
 {\tilde K}_2(\yM[2],\wM)
-
{\tilde K}_2^R(\yM[2],\wM)
\right|
&\leq \int\limits_{ {\tilde h}_2^e \setminus {\tilde h}_2^R}
e^{-u_2\Re(G_2(\zM[2],\yM[2]+\wM))}
e^{\Re \xi_2}
\,|d\zM[2]|
\nonumber \\
&\leq \int\limits_{ {\tilde h}_2^e \setminus {\tilde h}_2^R}
e^{O(1) -(R-o(1))(\yM[2]+\wM)}
\,|d\zM[2]|
\nonumber \\
&\leq 
O(u_2^{1/3})e^{-(R-o(1))(\yM[2]+\wM)}.
  \label{eq:K2trunc}
\end{align}

\begin{proof}[Proof of~\ref{lem:quant_Airy}]
  Recall that \eqref{eq:KHankel} can be expressed as $\Kop/(\sqrt{2}u_2^{1/6}) = \Kop_1 \cdot \Kop_2.$  Hence, we have the estimate
  \[
    \nuclear[ \pi_1 \partial^\ell \Kop/(\sqrt{2}u_2^{1/6}) \pi_2]
    \leq 
    \HSN{\pi_1 \partial^\ell \Kop_1}
  \HSN{\Kop_2 \pi_2}.
  \]
  (As before, $\nuclear[\cdot]$ denotes the nuclear norm.)

We start with estimating the second Hilbert-Schmidt norm.  By \eqref{eq:K2trunc}, we have that the Hilbert-Schmidt norm of ${\tilde K}_2 - {\tilde K}_2^R$ is at most $O(u^{1/3}/R^{1/2}e^{-0.99 Rt_2}).$  As for the norm of ${\tilde K}_2^R,$ 
define
\[
  \zeta_2' = u_2\G[2] + \zM[2]\yM[2] - \frac{1}{3}\zM[2]^3. 
\]
Expanding $G_2$ as in \eqref{eq:taylor} we have
\(
\zeta_2' = O(u_2^{-1/3}\zM[2]^4).
\)
Hence on ${\tilde h}_2^R,$ with $R = O(u_2^{1/12})$ we have

\begin{align}
  |{\tilde K}_2^R(\yM[2], \wM)|
&\leq
\int\limits_{ {\tilde h}_2^R}
e^{\Re \zeta_2'}
e^{
  \Re\left( \zM[2]^3/3 - \zM[2] (\yM[2]+\wM)\right)
}
\,|d\zM[2]| 
\nonumber \\
&\leq
e^{O(1)}
\int_{-R}^R
 e^{-\frac{2\left( \yM[2] + \wM \right)^{3/2}}{3}
  -\frac{8t^2 (\yM[2] + \wM)^{1/2}}{9}}\,dt.
\nonumber \\
&\leq
e^{O(1)}
\frac{
  e^{-\frac{2\left( \yM[2] + \wM \right)^{3/2}}{3}}}
  {(\yM[2] + \wM)^{1/4}}.
  \label{eq:K2bulkbound}
\end{align}
Hence we get that the Hilbert-Schmidt norm of ${\tilde K}_2^R$ is $O(e^{-\tfrac 23 t_2^{3/2}}),$
so that $\HSN{\Kop_2 \pi_2} = O(e^{-\tfrac 23 t_2^{3/2}} + e^{-0.98 u_2^{1/12}t_2}).$

As for $\HSN{\pi_1 \partial^{\ell} \Kop_1},$ observe that we have the kernel representation
\[
  \partial_{\yM[1]}^{\ell} {\tilde K}_{1}(\yM[1],\wM)
  =
  \frac{\one[\wM \geq 0]}{2\pi i}
  \int\limits_{\tilde{\gamma}_1}
  \zM[1]^\ell
e^{u_1
G_1(\zM[1],\yM[1]+\wM)
}
\,d\zM[1].
\]
Hence, the same truncation approach as used in \eqref{eq:K1trunc} for the $\ell = 0$ case works here.  Further, the same argument as given for $\Kop_2$ shows that
\(
\HSN{\pi_1 \partial^{\ell} \Kop_1} t_1^{-\ell}
= O_\ell(e^{-\tfrac 23 t_1^{3/2}} + e^{-u_1^{1/12}t_1/C})
\)
for some constant $C>0,$
which concludes the proof.
\end{proof}

\begin{proof}[Proof of~\ref{lem:Airy_difference}]
We can bound the nuclear norm quantity we seek to estimate as
\begin{align}
  \nuclear[
    \pi_1 {\tilde K}(u_1,\cdot;u_2,\cdot) \pi_2
  -
\pi_1 {\Kairy}(\cdot, \cdot) \pi_2] \hspace{-1.5in}&\hspace{1.5in} \nonumber \\
&=
\nuclear[
\pi_1\Kop_1 \cdot \Kop_2\pi_2 - \pi_1\Aiop \cdot \Aiop^*\pi_2
] \nonumber \\
&\leq
\nuclear[
\pi_1(\Kop_1-\Aiop) \cdot \Kop_2\pi_2 
]+
\nuclear[
\pi_1(\Aiop) \cdot (\Kop_2-\Aiop^*)\pi_2 
]\nonumber  \\
&\leq
\HSN{\pi_1(\Kop_1-\Aiop)}
\HSN{\Kop_2\pi_2}
+
\HSN{\pi_1\Aiop}
\HSN{(\Kop_2-\Aiop^*)\pi_2}.
  \label{eq:nuclearHS}
\end{align}
%Let $\Kop_1$ and $\Kop_2$ be the corresponding operators from $L^2(dw) \to L^2(d\mu_1)$ and from $L^2(d\mu_2) \to L^2(dw)$ respectively.  

%To estimate these quantities, we will give pointwise bounds on the differences between ${\tilde K}_i$ and $A.$  These bounds will be broken into two different scales of $\yM[i] + \wM.$  In deriving these bounds we will use the following notation.  
By Lemma~\ref{lem:quant_Airy}, we can control the
$\HSN{\Kop_2\pi_2} = O(e^{-\tfrac 23 t_2^{3/2}} + e^{-\Omega(u^{1/12})}).$
%An analogous bound for
%$\HSN{\pi_1\Aiop}$
%follows 
From the standard Airy asymptotic
\begin{equation}
  \Ai(\wM) \leq Ce^{-2(\wM)^{3/2}/3},
  \label{eq:Aextreme}
\end{equation}
see \cite[\S 9.7.5]{DLMF}.  This translates immediately into bounds of the form
\(
\HSN{\pi_1\Aiop} = O(e^{-\tfrac 23 t_1^{3/2}}).
\)
Thus it only remains to estimate both of 
$\HSN{\pi_1(\Kop_1-\Aiop)}$ \
and
$ \HSN{(\Kop_2-\Aiop^*)\pi_2}.$ 
%From these bounds will also follow the desired bounds on $\HSN{\pi_1\Kop_1}$
%and $\HSN{\Kop_2\pi_2},$ and hence the claimed bound on $\nuclear[\pi_1 \Kop_1 \cdot \Kop_2 \pi_2].$

%For ${\tilde K}_1,$ we deform the contour to be the vertical line $\Re \zM[1]=1.$  
%Setting $x = u^{-1/3}$ and $y = tu^{-1/3}$ we have
%\[
%  u_1\Re(G_1(1+it,\yM[1]))
%  \leq -ct^2 - \yM[1]
%\]
%for some absolute constant $c>0.$  Hence we have the uniform bound:
%\begin{equation}
%  |{\tilde K}_1(\yM[1],\wM)|
%  \leq
%  \int\limits_{\Re \zM[1]=1}
%e^{\Re u_1
%G_1(\zM[1],\yM[1]+\wM)} |d\zM[1]|
%  \leq Ce^{-(\yM[1] + \wM)}.
%  \label{eq:K1extreme}
%\end{equation}
%
%For ${\tilde K}_2,$ we deform the contour to be the circle $\tilde S$ given by $|1+u_2^{-1/3}\zM[2]| = 1-u_2^{-1/3},$ on which we have $\Re \zM[2] \leq -1.$  
%Hence, combining these estimates, we may now estimate ${\tilde K}_2$ as
%\begin{align}
%  |{\tilde K}_{2}(\wM,\yM[2])|
%  &\leq
%\oint\limits
%e^{-
%\Re u_2
%G_2(\zM[2],\yM[2]+\wM)
%}
%e^{\xi_2(\zM[2],\wM,u_1,u_2)}
%|d\zM[2]|
%\nonumber \\
%&\leq
%\oint\limits
%e^{O(1) - \yM[2] - (1-o(1))\wM}
%|d\zM[2]| 
%\nonumber \\
%&\leq
%C u_2^{1/3} e^{-0.99(\yM[2] + \wM)}
%  \label{eq:K2extreme}
%\end{align}
%for all $u_2$ sufficiently large.
%
To compare ${\tilde K}_1^R$ and $A^R,$  
define
\[
  \zeta_1' = u_1\G[1] + \zM[1]\yM[1] - \frac{1}{3}\zM[1]^3. 
\]
Expanding $G_1$ as in \eqref{eq:taylor} we have
\(
\zeta_1' = O(u_1^{-1/3}\zM[1]^4).
\)
Provided that $R \geq \Re \zM[1]$ on $\tilde{h}_1^R,$ which forces $\yM[1] +\wM < \tfrac 23 R^2,$ we get that $\zeta_1' = O(u_1^{-1/3}R^4).$
Hence under the assumption $R^4 = O(u_1^{1/3}),$ we have
\begin{align}
   \left| 
 {A}^R(\yM[1],\wM)
-
{\tilde K}_1^R(\yM[1],\wM)
\right|
&\leq
\int\limits_{ {\tilde h}_1^R}
\left|
e^{\zeta_1'}
-1
\right|
e^{
  \Re\left( \zM[1]^3/3 - \zM[1] (\yM[1]+\wM)\right)
}
\,|d\zM[1]| 
\nonumber \\
&\leq
O(R^4u_1^{-1/3})
\int_{-R}^R
 e^{-\frac{2\left( \yM[1] + \wM \right)^{3/2}}{3}
  -\frac{8t^2 (\yM[1] + \wM)^{1/2}}{9}}\,dt.
\nonumber \\
&\leq
O(R^4u_1^{-1/3})
\frac{
  e^{-\frac{2\left( \yM[1] + \wM \right)^{3/2}}{3}}}
  {(\yM[1] + \wM)^{1/4}}.
  \label{eq:AK1bulk}
\end{align}
By taking $R = \log u_1$ in \eqref{eq:Atrunc}, \eqref{eq:K1trunc}, and \eqref{eq:AK1bulk}, we have that for $t_1 > 1,$
\begin{equation}
\HSN{\pi_1(\Kop_1-\Aiop)}
\leq 
e^{-\Omega((\log u_1)^2)}
+ O\left( (\log u_1)^4 u_1^{-1/3} \right)e^{-\frac{2}{3}t_1^{2/3}}.
  \label{eq:HSN1}
\end{equation}

Finally for $\Re \zM[2] \geq -u_2^{1/3},$ we have by \eqref{eq:xiH} that $\Re\xi_2 \leq 0.$  Hence on ${\tilde h}_2^R,$ we can estimate $|e^{\xi_2}| \leq 1.$  Thus,
provided that 
%$\yM[1] +\wM < \tfrac 23 R^2$ and 
$R^4=O(u_2^{1/3}),$ the same estimates as in \eqref{eq:AK1bulk} give
\begin{align}
   \left| 
   {A}^{*R}(\wM,\yM[2])
-
{\tilde K}_2^R(\wM,\yM[2])
\right|
&\leq
O(R^4u_2^{-1/3})
\frac{
  e^{-\frac{2\left( \yM[2] + \wM \right)^{3/2}}{3}}}
  {(\yM[2] + \wM)^{1/4}}.
  \label{eq:AK2bulk}
\end{align}
Taking $R = (\log u_1)^2$ in \eqref{eq:AStartrunc}, \eqref{eq:K2trunc}, and \eqref{eq:AK2bulk}, we have that for $t_2 > 1,$
\begin{equation}
\HSN{(\Kop_2-\Aiop^*)_2}
\leq 
e^{-\Omega((\log u_1)^2)}
+ O\left( (\log u_1)^8 u_1^{-1/3} \right)e^{-\frac{2}{3}t_2^{2/3}}.
  \label{eq:HSN2}
\end{equation}

Combining \eqref{eq:nuclearHS} with \eqref{eq:HSN1} and \eqref{eq:HSN2}, we have completed the proof.
\end{proof}

\begin{proof}[Proof of Lemma~\ref{lem:levuppertail}]
  We begin by recalling that 
  \[
    \Pr\left[ \lev[u] \leq t \right]
    =\det\left( \Id - \pi \Kop \pi \right),
  \]
  where $\pi$ is the restriction map to $[t, \infty)$ and $\Kop$ is given by kernel ${\tilde K}(u, \cdot;u,\cdot)/(\sqrt{2} u^{1/6})$ acting on $L^2.$  Note that $K(u_1,y_1;u_2,y_2)^{y_2^2/2 - y_1^2/2}$ is self-adjoint and positive definite, and hence so is the kernel restricted to  $\min |y_i| > a$ for any $a$.  This implies all the eigenvalues of $K$ are non-negative, and hence so are all the eigenvalues of $\pi \Kop \pi.$ Thus, we have the representation
    \[
    \det\left( \Id - \pi \Kop \pi \right)
    =\exp\left( -\sum_{n=1}^\infty \frac{\tr( (\pi \Kop \pi)^n)}{n} \right),
    \]
    which can be seen by considering the eigenvalues of the operator $\pi\Kop\pi$ (see also \cite[(3.9)]{GGK}).  We also have that all traces are non-negative and
    \(
    {\tr( (\pi \Kop \pi)^n)}
    \leq
    {\tr( (\pi \Kop \pi))}^n.
    \)
    Hence, we get the simple bounds
    \[
    1 - \tr ( \pi \Kop \pi)
    \leq
    \det\left( \Id - \pi \Kop \pi \right)
    \leq e^{-\tr (\pi \Kop \pi)}.
    \]
    Turning this around,
    \begin{equation}
      (1- e^{-\tr (\pi \Kop \pi)})
      \leq
    \Pr\left[ \lev[u] > t \right]
    \leq \tr ( \pi \Kop \pi).
      \label{eq:trBounds}
    \end{equation}
    Thus it only remains to give upper and lower bounds for the trace.

    The trace is given by
    \begin{equation}
\tr ( \pi \Kop \pi)
= \int_t^\infty {\tilde K}(\yM,\yM)\,d\yM.
= 
\int_t^\infty 
\int_0^\infty 
{\tilde K}_1(\yM,\wM)
{\tilde K}_2(\wM,\yM)
\,d\wM d\yM.
      \label{eq:tr}
    \end{equation}
    Using \eqref{eq:K1trunc} and \eqref{eq:K2trunc}, we have that
    \[
\tr ( \pi \Kop \pi)
= 
\int_t^\infty 
\int_0^\infty 
{\tilde K}_1^R(\yM,\wM)
{\tilde K}_2^R(\wM,\yM)
\,d\wM d\yM
+ O(e^{-\Omega(R t)}).
    \]

%    It need not be that ${\tilde K}_1^R(\yM,\wM)$ or ${\tilde K}_2^R(\yM,\wM)$ are real, but they are nearly so.  Recall $\zeta_1'$ from \eqref{eq:AK1bulk}, using which we may write
%\[
%{\tilde K}_1^R(\yM,\wM)
%=
%\int\limits_{ {\tilde h}_1^R}
%e^{\zeta_1'}
%e^{
%  \zM^3/3 - \zM (\yM+\wM)
%}
%\,d\zM. 
%\]
%As \(\zM^3/3 - \zM (\yM+\wM)\) is real-valued on ${\tilde h}_1^R,$
%we have that 
%\[
%  \int\limits_{ {\tilde h}_1^R}
%\left(\Re e^{\zeta_1'}\right)
%e^{
%  \Re\left( \zM^3/3 - \zM (\yM+\wM)\right)
%}
%\,d\zM.
%\]
%is real-valued by symmetry.  As $|e^{\zeta_1'}-1| = O(u^{-1/3}R^4)$ on ${\tilde h}_1^R,$ by making $R$ is a sufficiently small multiple of $u^{1/12},$ we may make 
%\[
%\tfrac 12 
%A^R(\yM,\wM)
%\leq 
%\Re{\tilde K}_1^R(\yM,\wM)
%\leq 
%2 A^R(\yM,\wM)
%\]
%for all $\yM > t$ and all $\wM \geq 0,$ and we may also make
%\[
%  |\Im{\tilde K}_1^R(\yM,\wM)| \leq \frac{1}{100} A^R(\yM,\wM),
%\]
%for all $\yM > t$ and all $\wM \geq 0.$
%It then follows that there is a constant $C>0$ so that for $R$ so chosen, we have 
%\[
%  \frac{1}{C} \leq
%  \frac{
%    \Re
%\int_t^\infty 
%\int_0^\infty 
%{\tilde K}_1^R(\yM,\wM)
%{\tilde K}_2^R(\wM,\yM)
%\,d\wM d\yM
%}{
%\int_t^\infty 
%\int_0^\infty 
%{A}^R(\yM,\wM)
%{A}^R(\wM,\yM)
%\,d\wM d\yM
%}
%\leq C
%\]
%for all $t \geq 1.$ 

Both of ${\tilde K}_1^R(\yM,\wM)$ or ${\tilde K}_2^R(\yM,\wM)$ are real, 
as their integrands commute with conjugation as functions of $\zM[1]$ and the contours $\{{\tilde h}_i^R\}$ are conjugation invariant.
Recall $\zeta_1'$ from \eqref{eq:AK1bulk}, using which we may write
\[
{\tilde K}_1^R(\yM,\wM)
=
\int\limits_{ {\tilde h}_1^R}
e^{\zeta_1'}
e^{
  \zM^3/3 - \zM (\yM+\wM)
}
\,d\zM. 
\]
As \(\zM^3/3 - \zM (\yM+\wM)\) is real-valued on ${\tilde h}_1^R,$
and as $|e^{\zeta_1'}-1| = O(u^{-1/3}R^4)$ on ${\tilde h}_1^R,$ by making $R$ is a sufficiently small multiple of $u^{1/12},$ we may make 
\[
\tfrac 12 
A^R(\yM,\wM)
\leq 
\Re{\tilde K}_1^R(\yM,\wM)
\leq 
2 A^R(\yM,\wM)
\]
for all $\yM > t$ and all $\wM \geq 0.$  A similar statement holds for ${\tilde K}_2^R.$
It then follows that for $R$ so chosen, we have 
\[
  \frac{1}{4} \leq
  \frac{
\int_t^\infty 
\int_0^\infty 
{\tilde K}_1^R(\yM,\wM)
{\tilde K}_2^R(\wM,\yM)
\,d\wM d\yM
}{
\int_t^\infty 
\int_0^\infty 
{A}^R(\yM,\wM)
{A}^R(\wM,\yM)
\,d\wM d\yM
}
\leq 4
\]
for all $t \geq 1.$ 
Using \eqref{eq:Atrunc}, we therefore conclude that there is a $\delta > 0$ sufficiently small and a $C>0$ sufficiently large so that with $R=\delta u^{1/12},$
\begin{equation}
  \frac{1}{C} \tr(\pi \Aiop \pi) - Ce^{-R t/C} - Ce^{-R^2t^{1/2}/C}
  \leq
\tr ( \pi \Kop \pi)
\leq C \tr(\pi \Aiop \pi) + Ce^{-R t/C} + Ce^{-R^2t^{1/2}/C}
  \label{eq:trcomparison}
\end{equation}

The trace of the Airy kernel is given by
\[
\tr(\pi \Aiop \pi)
=
\int_t^\infty 
\int_0^\infty 
\Ai(\yM+\wM)^2
\,d\wM d\yM
=
\int_t^\infty 
(s-t) \Ai(s)^2
\,ds.
\]
Using that $\Ai(s)s^{1/4}e^{\tfrac 2 3 s^{3/2}}$ is bounded above and below by constants for $s \geq 0,$ we therefore have that
\[
  \int_t^\infty 
(s-t) \Ai(s)^2
\,ds
=
e^{-\tfrac 4 3 t^{3/2}}(C+O(t^{-1/2}))
\]
for some constant $C.$  Hence by the positivity and continuity of the trace, we conclude that
\(
\tr(\pi \Aiop \pi)e^{\tfrac 4 3 t^{3/2}}
\)
is bounded above and below by constants.  This and \eqref{eq:trcomparison} completes the proof.
\end{proof}

\section{Offdiagonal kernel estimates for $u_1-u_2 \gg u_1^{2/3}$}
\label{sec:off}
\begin{lemma}
	For all $\yM[1] \geq 0,$ all $\yM[2] \geq 0,$ and all $u_1 > u_2,$
  \[
    \left| {\tilde K}_o( u_1, y_1 ; u_2, y_2) \right|
    \leq \frac{u_1^{1/6}u_2^{1/6}}{u_1^{1/2} - u_2^{1/2}} 
  \xi'(u_1,y_1) \xi'(u_2,y_2)
  \]
  where
  \[
    \xi'(u_i, y_i) = C(e^{-\tfrac 23 \yM[i]^{3/2}}
    +e^{-u_i^{1/12}\yM[i]}).
  \]
  for some absolute constant $C>0.$

   \label{lem:rt_Ko_pointwise}
\end{lemma}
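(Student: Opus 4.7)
The starting point is the factored contour integral representation \eqref{eq:hermite3}. Because $u_1>u_2$, the $\phi$ and $K_e$ terms vanish and $\tilde K_o=\tilde K$ has the form
\[
\tilde K_o(u_1,y_1;u_2,y_2)=\frac{1}{2(\pi i)^2}\int_{\gamma_2^c}\int_{\gamma_1^c}\frac{e^{u_1 G_1(\zM[1],\yM[1])}}{e^{u_2 G_2(\zM[2],\yM[2])}}\,\frac{dz_1\,dz_2}{z_1-z_2}.
\]
The exponent factorizes as a function of $\zM[1]$ times a function of $\zM[2]$, so only $1/(z_1-z_2)$ couples the two variables. The plan is to deform to $\yM[i]$-dependent minimum-phase contours, extract a uniform lower bound $|z_1-z_2|\geq (u_1^{1/2}-u_2^{1/2})/\sqrt{2}$, pull this factor out, and then bound each single-contour integral by $u_i^{1/6}\,\xi'(u_i,\yM[i])$ using the saddle-point analysis of Section~\ref{sec:sharp}.

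For the deformation, we replace $\gamma_1^c$ by the contour $\tilde h_1^e$ defined exactly as in Section~\ref{sec:sharp} but with $\wM=0$: the hyperbolic arc $(\Re\zM[1])^2-(\Im\zM[1])^2/3=\yM[1]$ through the saddle $\zM[1]=\sqrt{\yM[1]}$, completed by vertical lines once $|\Im\zM[1]|>R:=c\,u_1^{1/12}$. Symmetrically, we replace $\gamma_2^c$ by $\tilde h_2^e$, passing through $-\sqrt{\yM[2]}$ and closed by the arc $|1+u_2^{-1/3}\zM[2]|=1-u_2^{-1/3}$ around $z_2=0$. The hyperbolic vertex of $\tilde h_1^e$ satisfies $\Re\zM[1]\geq 0$ and that of $\tilde h_2^e$ satisfies $\Re\zM[2]\leq 0$, so the homotopies keep the two contours disjoint and enclose only the pole at $z_2=0$ (already enclosed by $\gamma_2^c$), producing no new residues.

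On $\tilde h_1^e$ we have $\Re z_1\geq\sqrt{u_1/2}$, while on $\tilde h_2^e$ the rightmost point lies on the circular arc at $\Re\zM[2]=-1$, giving the worst case $\Re z_2=(u_2^{1/2}-u_2^{1/6})/\sqrt 2<\sqrt{u_2/2}$. Hence $|z_1-z_2|\geq(u_1^{1/2}-u_2^{1/2})/\sqrt 2$ uniformly, and factoring gives
\[
|\tilde K_o|\leq\frac{C}{u_1^{1/2}-u_2^{1/2}}\int_{\tilde h_1^e}|e^{u_1G_1}|\,|dz_1|\int_{\tilde h_2^e}|e^{-u_2G_2}|\,|dz_2|.
\]
Using $|dz_i|=u_i^{1/6}|d\zM[i]|/\sqrt 2$, each factor reduces to a $\zM[i]$-integral of the type already treated in the proof of Lemma~\ref{lem:quant_Airy}. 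On the hyperbolic arc the cubic expansion $u_iG_i=\tfrac13\zM[i]^3-\zM[i]\yM[i]+O(u_i^{-1/3}\zM[i]^4)$ combined with the minimum-phase estimate \eqref{eq:minphasebound} produces the Airy contribution of order $e^{-\tfrac23\yM[i]^{3/2}}$, as in \eqref{eq:AK1bulk} and \eqref{eq:K2bulkbound}. On the vertical and circular extensions the rough bound \eqref{eq:Feasy} yields the sub-leading tail $e^{-u_i^{1/12}\yM[i]/C}$, exactly as in the derivations of \eqref{eq:K1trunc} and \eqref{eq:K2trunc}. Adding the two pieces gives $\xi'(u_i,\yM[i])$; multiplying by $u_i^{1/6}$ completes the bound.

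The conceptual structure is therefore straightforward, with essentially all the analytic work inherited from Section~\ref{sec:sharp}. The main obstacle is purely geometric: one must verify that the homotopy from $\gamma_2^c$ to $\tilde h_2^e$ — and specifically its circular arc, the portion closest to $\tilde h_1^e$ — stays entirely within $\Re z_2\leq(u_2^{1/2}-u_2^{1/6})/\sqrt 2$. This ensures that the separation $\Re z_1-\Re z_2\geq (u_1^{1/2}-u_2^{1/2})/\sqrt 2$ holds throughout the deformation, and is what ultimately produces the sharp $1/(u_1^{1/2}-u_2^{1/2})$ decay in the final estimate.
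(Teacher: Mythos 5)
Your proposal is correct and follows essentially the same route as the paper: deform to the $\wM=0$ contours $\tilde h_1^e,\tilde h_2^e$ of Section~\ref{sec:sharp}, extract the uniform lower bound on $|z_1-z_2|$, and bound each single-variable integral exactly as in the derivations of \eqref{eq:minphasebound}, \eqref{eq:K1trunc} and \eqref{eq:K2trunc}. (One small imprecision that does not affect the argument: for $\yM[2]<1$ the rightmost point of $\tilde h_2^e$ is the hyperbola vertex at $\Re\zM[2]=-\sqrt{\yM[2]}$ rather than the circle at $\Re\zM[2]=-1$, but either way $\Re z_2\leq\sqrt{u_2/2}$ and the separation bound is unchanged.)
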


\begin{proof}
	Recall~\eqref{eq:hermite3}, due to which we may express ${\tilde K}_o$ as 
\begin{equation}
  {\tilde K}_o = \frac{1}{2(\pi i)^2}
  \iint
  \frac{e^{u_1\G[1]}}{e^{u_2\G[2]}} \frac{dz_1 dz_2}{z_1 - z_2}
  \label{eq:rt_tildeK_o},
\end{equation}
where we deform the contours to be the same as those in Lemma~\ref{lem:quant_Airy}, with $\wM=0$ (see Figure \ref{fig:h1h2}) and where
$\zM[i]$ is given by \eqref{eq-scaling}.
On these contours, we have that $|z_1 - z_2| \geq 2^{1/2}\left( u_1^{1/2} - u_2^{1/2} \right).$ Hence, changing the integration to be in $\zM$, we have the simple estimate
\begin{equation}
  \left| {\tilde K}_o \right| \leq
  \frac{u_1^{1/6}u_2^{1/6}}{u_1^{1/2} - u_2^{1/2}} 
  \left[ \int_{{\tilde h}_1^e} e^{u_1 \Re\G[1]}\,|d \zM[1]|\right]
  \left[ \int_{{\tilde h}_2^e} e^{-u_2 \Re\G[2]}\,|d \zM[2]|\right].
  \label{eq:rt_tko1}
\end{equation}
Thus to complete the claimed bound, it suffices to show that each integral is bounded by $\xi$ for an appropriately large constant $C$.  

Define
\begin{align*}
	I_1^e &= \int_{{\tilde h}_1^e} e^{u_1 \Re\G[1]}\,|d \zM[1]| \\
	I_1^R &= \int_{{\tilde h}_1^R} e^{u_1 \Re\G[1]}\,|d \zM[1]|,
\end{align*}
and define $I_2^e$ and $I_2^R$ analogously.  Using the same estimate as \eqref{eq:K1trunc}, we have that for $R=O(u_1^{1/3})$
\begin{align}
 \left| 
I_1^e - I_1^R
\right|
= O(R^{-1/2})e^{-\Omega(R^3)-R\yM[1]/\sqrt{3}}.
  \label{eq:I1trunc}
\end{align}
Uniformly for $\zM[1]$ on $\tilde{h}_1^R,$ we have by \eqref{eq:minphasebound} that when $R^4=O(u_1^{1/3}),$
\[
u_1 \Re\G[1]
\leq
  -\frac{2\yM[1]^{3/2}}{3}
  -\frac{8(\Im \zM[1])^2 \yM[1]^{1/2}}{9}
  +O(R^4u_1^{-1/3}).
\]
Hence integrating over $\zM[1],$ we have 
\begin{equation}
	I_1^R \leq e^{-\frac{2\yM[1]^{3/2}}{3} + O(1)}.
	\label{eq:I1R}
\end{equation}
Combining \eqref{eq:I1trunc} and \eqref{eq:I1R} and taking $R=u_1^{1/12},$ we have
\begin{equation}
	I_1^e \leq e^{-\frac{2\yM[1]^{3/2}}{3} + O(1)} + e^{-u_1^{1/12}\yM[1]/\sqrt{3} + O(1)
}
	\label{eq:I1e}
\end{equation}

For $I_2^{e}$ and  $I_2^{R}$ we proceed in the same manner, using the same estimates for $I_2^{R}$ as in $I_1^{R}$ and using the estimate from \eqref{eq:K2trunc} to make the comparison.  Again taking $R=u_2^{1/12}$ we get
%For $I_2^{e/R},$ using the same estimate as in \eqref{eq:K2trunc}, we have that for $R = o(u_2^{1/3})$ 
%\begin{align}
% \left| 
%I_2^e - I_2^R
%\right| = O(u_2^{1/3})e^{-(R-o(1))\yM[2]}.
%  \label{eq:I2trunc}
%\end{align}
%Applying the same pointwise estimates we did in $I_1^R,$ we get that for $R = O(u_2^{1/12}),$
\begin{equation}
	I_2^e \leq e^{-\frac{2\yM[2]^{3/2}}{3} + O(1)} + e^{-u_2^{1/12}\yM[2]/\sqrt{3} + O(1)}.
	\label{eq:I2e}
\end{equation}
\end{proof}

\begin{lemma}
  There is an absolute constant $\sqrt{2}>c>0$ so that for all $y_1 \geq cu_1^{1/2},$ all $y_2 \geq cu_2^{1/2},$ and all $u_1 > u_2,$
  \[
    \left| {\tilde K}_o( u_1, y_1 ; u_2, y_2) \right|
    \leq \frac{u_1^{1/6}u_2^{1/6}}{u_1^{1/2} - u_2^{1/2}} \xi(y_1) \xi(y_2)
  \]
  where
  \[
    \xi(y_i) = \begin{cases}
      Ce^{C \yM[i]^{3/2}} & \yM[i] \leq 0 \text{ and } \\
      \frac{C}{1 + \yM[i]} & \yM[i] \geq 0
    \end{cases}
  \]
  for some absolute constant $C>0.$
  \label{lem:Ko_pointwise}
\end{lemma}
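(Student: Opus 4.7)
My plan is to extend the approach of Lemma~\ref{lem:rt_Ko_pointwise} to the bulk regime $cu_i^{1/2}\le y_i<\sqrt{2u_i}$ (equivalently, $\yM[i]\le 0$). Starting from \eqref{eq:rt_tildeK_o}, I will select contours $\Gamma_i$ adapted to the fact that the saddle points of $f_i(z)=z^2-2zy_i+u_i\log z$ now form the complex conjugate pair $z_\pm=(y_i\pm i\sqrt{2u_i-y_i^2})/2$ on the circle $\mathcal{C}_i=\{|z|=\sqrt{u_i/2}\}$. Specifically, each $\Gamma_i$ will pass through its saddles along a short arc of $\mathcal{C}_i$ (or a steepest-descent curve transverse to $\mathcal{C}_i$), with outer extensions for $\Gamma_1$ kept in $\{|z|\ge\sqrt{u_1/2}\}$ and inner extensions for $\Gamma_2$ kept in $\{|z|\le\sqrt{u_2/2}\}$. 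The triangle inequality then yields $|z_1-z_2|\ge(u_1^{1/2}-u_2^{1/2})/\sqrt{2}$ throughout, so that the bound \eqref{eq:rt_tko1} factorises and I only need to control each one-variable integral individually.

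A direct computation parametrising $z_i=\sqrt{u_i/2}\,e^{i\theta}$ on $\mathcal{C}_i$ gives
\[
  u_i\Re G_i = 2\sin^2(\theta/2)\bigl(\sqrt{2u_i}\,y_i - 2u_i\cos^2(\theta/2)\bigr),
\]
whose critical value at $\cos\theta = y_i/\sqrt{2u_i}$ equals $-(y_i-\sqrt{2u_i})^2/2 = -|\yM[i]|^2/(4u_i^{1/3})$, with second derivative transverse to $\mathcal{C}_i$ of order $u_i\sin^2\theta_+\asymp u_i$ provided $c<\sqrt{2}$ is bounded away from $\sqrt{2}$. Gaussian saddle-point analysis then produces each $z_i$-integral of order $e^{\pm|\yM[i]|^2/(4u_i^{1/3})}$, up to $O(1)$ width factors (plus exponentially smaller contributions from the extensions, handled as in Lemma~\ref{lem:rt_Ko_pointwise}). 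The elementary inequality $|\yM[i]|^2/(4u_i^{1/3})\le C|\yM[i]|^{3/2}$, which follows from $|\yM[i]|\le(2-c\sqrt{2})u_i^{2/3}$, then converts this into the claimed bound $\xi(\yM[i])=Ce^{C|\yM[i]|^{3/2}}$ in the $\yM[i]\le 0$ regime.

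For $\yM[i]\ge 0$ the contour $\tilde h_i^e$ from Lemma~\ref{lem:rt_Ko_pointwise} remains appropriate; here a crude length-times-maximum estimate (rather than the Gaussian sharpening used in Lemma~\ref{lem:rt_Ko_pointwise}) produces the weaker $\xi(\yM[i]) = C/(1+\yM[i])$, with the $1/(1+\yM[i])$ factor coming from the decrease of $\Re G_i$ along the extensions away from the real saddle at $z=\sqrt{u_i/2}$. The main obstacle I anticipate is the mixed-regime case, where $\yM[1]$ and $\yM[2]$ carry opposite signs so that one $\Gamma_i$ is a circular arc in the bulk while the other is the Airy-type hyperbola $\tilde h^e$ near the edge; verifying compatibility of these two contour shapes — simultaneously preserving the separation bound $|z_1-z_2|\gtrsim u_1^{1/2}-u_2^{1/2}$ and the individual integrand bounds — requires routing each contour through vertical rays at large $|\Im z|$ (as in the construction of $\tilde h_i^e$), and checking piecewise on the junctions that the estimates above survive.
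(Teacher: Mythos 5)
Your plan is a genuinely different contour choice from the paper's, and the key claim that carries the Gaussian bound has a gap precisely at the Airy edge, which the lemma necessarily includes.

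You write that the transverse second derivative is of order $u_i\sin^2\theta_+\asymp u_i$ ``provided $c<\sqrt2$ is bounded away from $\sqrt2$.'' The hypothesis $y_i\ge cu_i^{1/2}$ gives $\cos\theta_+ = y_i/\sqrt{2u_i}\ge c/\sqrt2$, i.e.\ it bounds $\cos\theta_+$ away from $0$ \emph{from below}; it does nothing to bound it away from $1$. As $\yM[i]\to 0^-$ (which is inside the range of the lemma, since $y_i\le\sqrt{2u_i}$ is allowed) one has $\cos\theta_+\to 1$, so $\sin\theta_+\sim\sqrt{|\yM[i]|}\,u_i^{-1/3}\to 0$ and the two conjugate saddles coalesce at the Airy point $z_i=\sqrt{u_i/2}$. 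There is no regime of $c$ in which $\sin\theta_+$ is bounded away from zero over the whole range of the lemma. Consequently the ``$O(1)$ width factors'' in your Gaussian saddle estimate are not $O(1)$: working out the Laplace estimate on the arc of $\mathcal{C}_2$ in the $\zM[2]$ variable, the Gaussian width is of order $u_2^{1/6}/\sqrt{|\yM[2]|}$, and at $\yM[2]=0$ the quadratic term vanishes entirely (the circle is the null direction of the cubic Airy phase), so $h_2(\theta)=4u_2\sin^4(\theta/2)$ and the arc integral is of order $u_2^{1/12}$. This exceeds the claimed bound $\xi(0)=O(1)$ by a polynomial in $u_2$. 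More generally, for $1\lesssim|\yM[2]|\lesssim(\log u_2)^{2/3}$ your Laplace estimate $e^{|\yM[2]|^2/(4u_2^{1/3})}\cdot u_2^{1/6}/\sqrt{|\yM[2]|}$ is not absorbed by $Ce^{C|\yM[2]|^{3/2}}$. So the exponent comparison $|\yM[i]|^2/(4u_i^{1/3})\le C|\yM[i]|^{3/2}$ is fine, but the polynomial prefactor is not; the proof breaks on the very slice of the bulk that the lemma must cover. (For $\Gamma_1$ there is a second, milder issue: the circle through $z_\pm^{(1)}$ is the steepest \emph{ascent} curve for $\Re G_1$, so the maximum of $e^{u_1\Re G_1}$ on the arc sits at $\theta=0$, not at the saddle, and the $e^{-|\yM[1]|^2/(4u_1^{1/3})}$ figure you quote is not what the arc delivers; this is less damaging because $\xi(\yM[1])\ge 1$, but it illustrates the same underlying confusion.)

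The paper sidesteps the coalescence problem by never routing the contours through the bulk saddles at all. It uses $\yM[i]$-independent rays $\tilde\gamma_1,\tilde\gamma_2$ at angles $\pi/3,2\pi/3$ emanating from the single point $\zM[i]=0$ (i.e.\ from $z_i=\sqrt{u_i/2}$), which are the Airy steepest-descent directions. On these rays the local bound $u_1\Re\G[1]\le -c_0|\zM[1]|^3-\Re\zM[1]\,\yM[1]$ holds; for $\yM[1]\le 0$ the maximum of the integrand along the ray is found by completing the cube around $\eta=\sqrt{-\sqrt3\,\yM[1]/(6c_0)}$, which yields exactly $e^{O(|\yM[1]|^{3/2})}$ with a uniformly bounded prefactor, and at $\yM[i]=0$ it gives $O(1)$. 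This is looser than the true $e^{O(|\yM[i]|^2/u_i^{1/3})}$ decay deep in the bulk, but it is exactly what the stated $\xi$ requires and it holds uniformly. To repair your approach you would have to abandon the circle near $\theta=0$, either by switching to the Airy ray once $|\yM[i]|\lesssim(\log u_i)^{2/3}$ and stitching the two regimes together, or by carrying out a genuine Airy (cubic turning-point) analysis rather than a Gaussian one on the coalescing saddles; at that point you essentially recover the paper's argument.
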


\begin{proof}
  Recall~\eqref{eq:hermite3}, due to which we may express ${\tilde K}_o$ as 
\begin{equation}
  {\tilde K}_o = \frac{1}{2(\pi i)^2}
  \iint
  \frac{e^{u_1\G[1]}}{e^{u_2\G[2]}} \frac{dz_1 dz_2}{z_1 - z_2}
  \label{eq:tildeK_o},
\end{equation}
with the contours given in~\eqref{eq:contours}
and $\zM[i]$ given by \eqref{eq-scaling}.
On these contours, we have that $|z_1 - z_2| \geq 2^{1/2}\left( u_1^{1/2} - u_2^{1/2} \right).$ Hence, changing the integration to be in $\zM$, we have the simple estimate
\begin{equation}
  \left| {\tilde K}_o \right| \leq
  \frac{u_1^{1/6}u_2^{1/6}}{u_1^{1/2} - u_2^{1/2}} 
  \left[ \int e^{u_1 \Re\G[1]}\,|d \zM[1]|\right]
  \left[ \int e^{-u_2 \Re\G[2]}\,|d \zM[2]|\right].
  \label{eq:tko1}
\end{equation}
Thus to complete the claimed bound, it suffices to show that each integral is bounded by $\xi$ for an appropriately large constant $C$.  Fix two parameters $\delta_1>0$ and $\delta_2>0$ to be determined.  In terms of these parameters, define the following straight-line contours in $\C,$ which are just the contours from~\eqref{eq:contours} in the $\zM[i]$ variables:
\begin{align*}
  \tilde{\gamma}_1 &= [0,\delta_1 e^{i\pi/3}u_1^{1/3}],  &  \tilde{\gamma}_1^e &= \delta_1 e^{i\pi/3}u_1^{1/3} + i\R_+, \\
  \tilde{\gamma}_2 &= [0,\delta_2 e^{2i\pi/3}u_2^{1/3}], &  \tilde{\gamma}_2^e &= \delta_2 e^{2i\pi/3}u_2^{1/3} + \R_-.
\end{align*}
By conjugate symmetry, it suffices to show that we have a bound of the form $\int_{\tilde{\gamma}_1} e^{u_1 \Re G(\zM[1],\yM[1])}\,|d \zM[1]| \leq \xi(\yM[1])/4,$ appropriately modified for all $4$ contours.
%Write $\tilde{\gamma}_1$ for the straight line in $\C$ $[0,\delta e^{i\pi/3}u_1^{1/3}].$  This is the image $\gamma_1$ under the change of cooridnates from $z_1$ to $\tilde{z}_1.$  Likewise, let $\tilde{\gamma}_1^e$ be the ray $\delta e^{i\pi/3}u_1^{1/3} + i\R_+.$  Define $\tilde{\gamma}_2$ and $\tilde{\gamma}_2^e$ analogously.

We begin with some preliminaries that will determine how to pick $\delta_1$ and $\delta_2.$
Define $F(z) = \Re\left[\log(1+z) - z  + z^2/2\right].$  From the Taylor expansion of the log, we have that $F(z) = \Re( z^3/3) + O(z^4).$  Hence there are some constants $c_0 > 0$ and $\delta_1 >0$ so that for $|z| \leq \delta_1$ and $\arg(z) = \pi/3$ we have 
\begin{equation*}
  F(z) \leq -c_0 |z|^3.%  \label{eq:localFbound}
\end{equation*}
Recall from \eqref{eq:G} that $\Re\G[1] = F(u^{-1/3}\zM[1]) - \frac{\zM[1] \yM[1]}{u_1}.$  Hence applying this bound to $\G[1]$ for $\zM[1] \in \tilde{\gamma}_1$ we have
\begin{equation}
  u_1\Re \G[1] \leq -c_0 |\zM[1]|^3 - \Re \zM[1] \yM[1].
%  G_1(\zM[1],\yM[1]) 
  \label{eq:localGbound}
\end{equation}
%By the same reasoning and by shrinking $\delta$ and $c_0$ if need be, we can also assure that for $|z| \leq \delta$ and $\arg(z) = 2\pi/3,$ we have
%\begin{equation}
%  F(z) \geq c_0 |z|^3.
%  \label{eq:localFbound2}
%\end{equation}

Writing $z = x+iy$, we have that $F(x+iy)$ satisfies
\[
  F(x+iy) = \frac{1}{2} \log( (1+x)^2 + y^2) -x 
  + \frac{x^2}{2}
  - \frac{y^2}{2}.
\]
%Set $\omega = e^{i \pi / 3} \delta$ and write 
Fix some $x_0>0$ and note that for all $x \geq x_0$ and $y \geq 0$ we have that
\begin{align*}
  \partial_y F(x+iy) &= \frac{y}{(1+x)^2 + y^2} - y \\
  &\leq \frac{-2x_0y - y^3}{(1+x_0)^2+y^2} \\
  &\leq -c(x_0)y
\end{align*}
for some $c(x_0) > 0.$  
Setting $\omega_1 = e^{i \pi / 3}\delta_1,$ we may integrate the previous inequality to arrive at
%in the line segment $[0, e^{i\pi/3} \delta_0]$ we have the bound
\begin{equation*}
  F(\omega_1 + iy) \leq -c_0 \delta_1^3 - c_1y^2, %  \label{eq:Fbound}
\end{equation*}
for $y \geq 0.$
Applying this bound to $\G[1]$ for $\zM[1] \in \tilde{\gamma}_1^e,$ which can be expressed as $\zM[1] = \omega_1 u_1^{1/3} + it$ for $t \in \R_+,$ yields
\begin{equation}
  u_1 \Re \G[1] \leq -c_0 u_1 \delta_1^3 -c_1 u_1^{1/3} t^2 -  \frac{\delta_1 u_1^{1/3}}{2} \yM[1].
\label{eq:Gbound} 
\end{equation}

Picking $\delta_2$ requires more effort, as for $\delta_2$ too small, $G(\zM[2],\yM[2])$ on $\tilde{\gamma}_2^e$ can be negative for a large range of $\zM[2]$.  
We will see that we can take $\delta_2 = 2.$  
Write
\[
  f(t) = F(e^{2\pi i/3}t) = \frac{1}{2} \log( 1 - t + t^2) + \frac{t}{2} - \frac{t^2}{4}.
\]
Then we have
\[
  f'(t) = \frac{1}{2} \frac{2t^2 - t^3}{1-t+t^2}.
\]
For $0 \leq t \leq 2,$ we can bound this below by $f'(t) \geq \frac 1 6 (2t^2 - t^3).$  Integrating, we get that $f(t) \geq \frac 1 9 t^3( 1- \frac 3 8 t) \geq \frac{t^3}{36}$ on $t \leq 2.$  Hence, we have shown that for $|z| \leq 2$ with $\arg(z) = 2\pi /3$ we have
\begin{equation}
  F(z) \geq \frac{|z|^3}{36}. \label{eq:localFbound2}
\end{equation}
Hence for $\zM[2] \in \tilde{\gamma}_2$ we have
\begin{equation}
  u_2 \Re \G[2] \geq \frac{ {|\zM[2]|}^3}{36} - \Re \zM[2] \yM[2].
  \label{eq:localGbound2}
\end{equation}

Meanwhile, for $x \leq -1$ and $y \neq 0,$
\begin{align*}
  \partial_x F(x+iy) &= \frac{1+x}{(1+x)^2 + y^2} -1 + x \\
  &\geq -1 + x. 
\end{align*}
Setting $\omega_2 = 2e^{i 2\pi / 3},$ and integrating the previous inequality in $x$ from $\omega_2,$ we get from \eqref{eq:localFbound2}
\begin{equation*}
  F(\omega_2 - x) \geq \frac{2}{9} + \frac{x^2}{2}  %\label{eq:Fbound2}
\end{equation*}
for all $x \geq 0.$  Parameterize $\zM[2] \in \tilde{\gamma}_2^e$ as $\zM[2] = \omega_2u_2^{1/3} - t$ for $t\in\R_+$ so that 
\begin{equation}
  u_2 \Re \G[2] \geq \frac{2}{9} u_2 + \frac{u_2^{1/3}t^2}{2} + \frac{u_2^{1/3}}{2}\yM[2] + t\yM[2]. 
  \label{eq:Gbound2}
\end{equation}

%We will see that it suffices to take $\delta_2 = 1/\sqrt{3}.$  We begin by noting that for $z = r e^{2\pi i /3}$ we have the representation
%\[
%  F(z) = 
%  \int\limits_0^r
%  \int\limits_0^{t_2}
%  \int\limits_0^{t_1}
%  \Re\left[\frac{2}{(1+e^{2\pi i /3}t_0)^3}\right]\,dt_0dt_1dt_2.
%\]
%Hence if $r$ is chosen sufficiently small that 
%\[
%  \min_{0 \leq t_0 \leq r} \Re\left[\frac{2}{(1+e^{2\pi i /3}t_0)^3}\right] = 6c_2 > 0,
%\]
%then we have $F(z) \geq c_2(r) |z|^3$ for $|z| \leq r$ having $\arg(z) = 2\pi /3.$ It is elementary to check that if $t_0 < \tfrac 12$ then we have $\arg( 1+e^{2 \pi i/3}t_0 ) < \pi / 6.$  Hence for any $r < \tfrac 12,$ $c_2(r) > 0.$

\noindent \emph{ The contour $\tilde{\gamma}_1:$ }
We must estimate
\[
  I_1 = 
    \int_{\tilde{\gamma}_1} e^{u_1 \Re G(\zM[1],\yM[1])}\,|d \zM[1]|.
\]
We have, recalling~\eqref{eq:localGbound}, that 
\[
  u_1 \Re G(\zM[1],\yM[1]) \leq -c_0 |\zM[1]|^3 - \Re \zM[1]\yM[1]
\]
uniformly in $\zM[1] \in \tilde{\gamma}_1$ and $\yM[1] \in \R.$   Thus we have that
\[
  I_1 \leq \int_0^{\delta_1 u_1^{1/3}} e^{-\frac{\sqrt{3}}{2} t \yM[1] - c_0{t^{3}}}\,dt.
\]
When $\yM[1] \geq 1,$ we estimate this integral just by bounding $e^{-c_0{t^{3}}} \leq 1.$  For $-1 \leq \yM[1] \leq 1,$ we estimate the integral by bounding $ e^{-\frac{\sqrt{3}}{2} t \yM[1]} \leq 1.$  Combining these bounds, and adjusting the constant $C$ in $\xi$ we can assure $I_1 \leq \xi(\yM[1])/2$ for $\yM[1] \geq -1.$  

For $\yM[1] < -1,$ we write $\eta = \sqrt{-\tfrac{\sqrt{3}}{6c_0} \yM[1]}$ and let $p(t) = -3c_0\left(\tfrac{t^3}{3} - \eta^2 t\right) = -c_0 t^3 - \tfrac{\sqrt{3}}{2}\yM[1] t.$  Then we may expand $p(t)$ as
\[
  \frac{p(t)}{3c_0} = -\frac{1}{3}( t - \eta )^3 - \eta (t - \eta)^2 + \frac{2}{3} \eta^3.
\]
In particular, for $t \geq 0,$ we have that $p(t) \leq 3c_0 \eta^3 - 3c_0\eta(t- \eta)^2.$  Hence we have
  $$I_1 \leq \int_0^\infty e^{p(t)}\,dt 
  \leq e^{3c_0\eta^3} \int_{-\infty}^\infty e^{-3c_0\eta t^2 }\,dt.$$
As $\eta$ is bounded uniformly away from $0$ for $\yM[1] < -1$ we can assure $I_1 \leq \xi(\yM[1])/4$ for $\yM[1] < -1.$

\noindent \emph{ The contour $\tilde{\gamma}_2:$ }
We must estimate
\[
  I_2 = 
    \int_{\tilde{\gamma}_2} e^{-u_2 \Re G(\zM[2],\yM[2])}\,|d \zM[2]|.
\]
From \eqref{eq:localGbound2}, we get $-u_2 \Re \G[2] \leq -\frac{|\zM[2]|^3}{36} + \Re\zM[2]\yM[2]$ on $\tilde{\gamma}_2,$ and so the previous proof applies \emph{mutatis mutandis}.

\noindent \emph{ The contour $\tilde{\gamma}_1^e:$ }
We must estimate
\[
  I_1^e = 
    \int_{\tilde{\gamma}_1} e^{u_1 \Re G(\zM[1],\yM[1])}\,|d \zM[1]|.
\]
We parameterize $\zM[1] \in \tilde{\gamma}_1^e$ by writing $\zM[1] = u_1^{1/3}\omega + it,$ where we recall that $\omega = e^{i \pi /3} \delta_1.$  
From \eqref{eq:Gbound}, 
\[
 u_1 \Re \G[1] \leq -c_0 u_1 \delta_1^3 -c_1 u_1^{1/3} t^2 -  \frac{\delta_1 u_1^{1/3}}{2} \yM[1]
\]
Under the assumption that $\yM[1] \geq -2c_0 u_1^{2/3} \delta_1^2,$ we can pick $C>0$ in $\xi$ sufficiently large that
\[
  I_1^e \leq \exp\left(-c_0 u_1 \delta_1^3 - \frac{\delta_1  u_1^{1/3}}{2} \yM[1]\right)\cdot\int_{0}^\infty e^{-c_1 u_1^{1/3}t^2}\,dt \leq \xi( \yM[1])/4.
\]

\noindent \emph{ The contour $\tilde{\gamma}_2^e:$ }
We must estimate
\[
  I_2^e = 
    \int_{\tilde{\gamma}_2} e^{-u_2 \Re G(\zM[2],\yM[2])}\,|d \zM[2]|.
\]
Using \eqref{eq:Gbound2},
\[
  u_2 \Re \G[2] \geq \frac{2}{9} u_2 + \frac{u_2^{1/3}t^2}{2} - \Re\zM[2]\yM[2]. 
\]
an analogous estimate to that done for $I_1^e$ holds.
\end{proof}

\section{Uniform boundedness of $\tilde K$ for all $u_2 - u_1 \gg u_1^{2/3}$}
\label{sec:boundedness}

We additionally need quantitative bounds for the suprema of ${\tilde K}_o$ and ${\tilde K}_e$ to estimate the difference of determinants.  
For ${\tilde K}_e,$ we have
\begin{lemma}
  Let $\Delta = 4(\sqrt{u_2}-\sqrt{u_1})\sum_{i} (y_i - \sqrt{2u_i})_{-}$ for $i=1,2.$  There is an absolute constant $C>0$ so that
  \[
\left|{\tilde K}_e( u_1, y_1 ; u_2, y_2) \right|
\leq
C\sqrt{u_2}
\exp\left( -\frac{ (\sqrt{u_2} - \sqrt{u_1})^3}{C\sqrt{u_2}} + \Delta \right).
  \]
  \label{lem:Ke_sup}
\end{lemma}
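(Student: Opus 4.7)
The approach is to estimate the contour integrals in \eqref{eq:Ke} pointwise and then integrate the modulus. Denote $a=\sqrt{u_1}$, $b=\sqrt{u_2}$ and $s_i = y_i - \sqrt{2u_i}$, so that $\Delta = 4(b-a)\sum_i (s_i)_-$, and set
\[
H(z) := \frac{J(u_2,y_2)}{J(u_1,y_1)}\cdot \frac{e^{2z(y_2-y_1)}}{(2z)^{u_2-u_1}},
\]
so that $\tilde K_e$ is expressed, via \eqref{eq:Ke}, as the integral of $H$ against $\pm 1/(\pi i)$ over $\gamma_+^r$ when $y_2>y_1$ and over $\gamma_-^r$ when $y_2\leq y_1$. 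A direct computation (Taylor expansion about $b=a$ for $b-a$ small; a one-variable check for general $b/a$) shows that at the saddle $\tau$ defined in \eqref{eq:tau},
\[
\log|H(\tau)|_{s_i=0} \;=\; b^2\log b - a^2\log a - \tfrac{b^2-a^2}{2} - \tfrac{b^2-a^2}{2}\log(a^2-ab+b^2) \;\leq\; -\tfrac{(b-a)^3}{Cb}
\]
for some absolute $C>0$. The correction from general $s_i$ is $(a-b)(s_1+s_2)/\sqrt{2}$, whose absolute value is bounded by $(b-a)\sum(s_i)_-\leq \Delta/4$. Hence $|H(\tau)| \leq \exp\bigl(-(b-a)^3/(Cb) + \Delta\bigr)$.

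For $y_2>y_1$, the contour $\gamma_+^r$ consists of the two vertical rays $\Re z=\Re\tau$ with $|\Im z|>\Im\tau$. Parameterizing $z=\Re\tau+it$ and using $|2z|^2=(2\Re\tau)^2+4t^2$, one has
\[
\frac{|H(\Re\tau+it)|}{|H(\tau)|}=\Bigl(\tfrac{|2\tau|^2}{(2\Re\tau)^2+4t^2}\Bigr)^{(b^2-a^2)/2}.
\]
After the substitution $t=\Im\tau+s$, the right-hand side is bounded by $\exp\bigl(-c(b-a)^2 s/b\bigr)$ for small $s>0$ (and by faster decay for large $s$), so
\[
\int_{\gamma_+^r}|H(z)|\,|dz| \;\lesssim\; |H(\tau)|\cdot\tfrac{b}{(b-a)^2}.
\]
In the regime of this section, $b-a$ is bounded below by an absolute constant, so $b/(b-a)^2=O(\sqrt{u_2})$, completing this case.

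The main technical obstacle is the case $y_2\leq y_1$, where $\gamma_-^r$ follows $\gamma_2^c$ through the real axis at $b/\sqrt 2>\Re\tau$. One first deforms $\gamma_-^r$ to the vertical segment $\{\Re\tau+it:|t|\leq \Im\tau\}$; this is legitimate since the enclosed region lies in the open right half-plane and hence avoids the order-$(u_2-u_1)$ pole of $H$ at the origin. On this segment the integrand modulus is maximized on the real axis at $z=\Re\tau$, where computation yields $\log|H(\Re\tau)|_{s_i=0}=+(b-a)^3/(12a) + O((b-a)^4/a^2)$, which is \emph{growing}, not decaying. The resolution exploits the constraint $y_2\leq y_1$, which gives $s_1-s_2\geq \sqrt{2}(b-a)$ and hence forces $\max(s_1,-s_2)\geq (b-a)/\sqrt{2}$. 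A short case analysis on the signs of $s_1,s_2$ then shows that either the $y$-dependent factor $\exp(-\sqrt{2}(b-a)s_1)$ appearing in $H(\Re\tau)$ contributes decay at most $\exp(-\Omega((b-a)^2))$, or the summand $4(b-a)(s_i)_-$ already contributes $\Omega((b-a)^2)$ to $\Delta$; in either case, the positive $O((b-a)^3/a)$ growth is dominated and the integrand satisfies the target pointwise bound. Since the segment has length $2\Im\tau\lesssim b-a\leq \sqrt{u_2}$, integration produces at most the required prefactor, completing the proof.
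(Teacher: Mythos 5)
The proposal diverges from the paper's proof most substantially in the case $y_2\leq y_1$, and it is there that it contains a genuine gap. The paper handles $y_2\leq y_1$ by pushing $\gamma_-^r$ \emph{outward}: since $y_2-y_1\leq 0$ and $u_2>u_1$, the integrand of \eqref{eq:Ke} decays as $\Re z\to+\infty$, so $\gamma_-^r$ can be replaced by the two horizontal half-lines $\{\Im z=\pm\Im\tau,\ \Re z\geq\Re\tau\}$; on that contour the integrand is everywhere dominated by its value near $\tau$, and the same supremum-times-measure bound as for $\gamma_+^r$ applies (yielding \eqref{eq:kf2}). You instead deform \emph{inward} to the vertical segment through $\Re\tau$, which passes through the point where the integrand's modulus is locally \emph{maximal}, and then try to argue that the constraint $y_2\leq y_1$ forces enough cancellation from the $y$-dependent factor and from $\Delta$.

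The specific inequality you invoke for this — that the positive $O\bigl((b-a)^3/a\bigr)$ contribution at $\Re\tau$ is ``dominated'' by the $\Omega\bigl((b-a)^2\bigr)$ coming from $\Delta$ or from the $y$-dependent decay — fails whenever $b-a\gtrsim a$, i.e.\ $u_2\gtrsim 4u_1$, which is squarely in the range where the lemma is applied (in Lemma~\ref{lem:KeHS} the gap $u_2-u_1\geq u_1^{2/3}e^{(\log u_2)^{2/3}}$ allows $u_2/u_1\to\infty$). In fact, the expansion $\log|H(\Re\tau)|_{s_i=0}=(b-a)^3/(12a)+O\bigl((b-a)^4/a^2\bigr)$ is only the small-$(b-a)/a$ regime; for $b\gg a$ the growth is of a different form, so the dominance claim is not a bookkeeping error but a sign that the case analysis is incomplete. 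There is also a mismatch in the $y$-dependent factor: at $z=\Re\tau_{0.5}$ the exponent is $-\tfrac{b-a}{\sqrt2}(s_1+s_2)$, not $-\sqrt2(b-a)s_1$, which changes the bookkeeping in your case analysis. Two further issues are worth noting. First, throughout you work with $\tau$ as in \eqref{eq:tau}, but that formula is $\tau_{0.5}$, which is the intersection point only when $\gamma_1$ and $\gamma_2$ themselves intersect; otherwise $\tau=\tau_\alpha$ for some $\alpha\in[0,0.5)$, and one must justify (as the paper does, via the monotonicity of $H(\alpha,t)$ in $\alpha$) that $\alpha=0.5$ is the worst case. Second, in the $y_2>y_1$ case the prefactor $b/(b-a)^2$ is only $O(\sqrt{u_2})$ if $b-a$ is bounded below; the lemma as stated imposes no such hypothesis (only $u_2-u_1\geq 2$, which does not prevent $b-a\to0$), so the crude length bound $|\tau|\lesssim\sqrt{u_2}$ — which is what the paper uses — is needed for the unconditional statement.
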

%\noindent Note that this implies that when $u_2 - u_1 = \omega(u_1^{2/3})$ and  
\begin{proof}
%  There is nothing to prove when $u_1 \geq u_2.

  We will proceed by producing bounds for $\tilde{ K }_e$ in terms of $\tau,$
 which we recall is the point of intersection of $\gamma_1^c$ and $\gamma_2^c.$  This location is not completely explicit, as it depends on $\delta_1$ and $\delta_2,$ chosen in Lemma~\ref{lem:Ko_pointwise}.  However, as we chose $\delta_2=2,$ which implies that $\gamma_2$ runs from the real axis to the imaginary axis, we have that $\tau$ is a point on $\gamma_2.$  If $\gamma_1$ and $\gamma_2$ intersect, they do so at the point
 \begin{equation}
   \tau_{0.5} = \frac{\sqrt{u_1} + \sqrt{u_2}}{2\sqrt{2}} + i \frac{\sqrt{3}}{2\sqrt{2}}({\sqrt{u_2} - \sqrt{u_1}})
   \label{eq:tauhalf}
 \end{equation}
 Otherwise $\gamma_1^c$ and $\gamma_2^c$
 intersect at some point on $\gamma_2$ with real part at least $\sqrt{u_1/2},$ and hence they intersect at 
 \begin{equation}
   \tau_\alpha = \frac{\sqrt{u_1} + \alpha(\sqrt{u_2} - \sqrt{u_1})}{\sqrt{2}} +
i \frac{(1-\alpha)\sqrt{3}}{\sqrt{2}}({\sqrt{u_2} - \sqrt{u_1}}),
   \label{eq:taua}
 \end{equation}
 for some $\alpha$ in $[0,0.5].$  

  We begin with the case that $y_1 \leq y_2,$ for which
  \[
    {\tilde K}_e( u_1, y_1 ; u_2, y_2)
    =
    -\frac{J(u_2,y_2)}{J(u_1,y_1)}
    \frac{1}{\pi i} \int_{\gamma_{+}^r} \frac{e^{2z_2(y_2 - y_1)}}{(2z_2)^{u_2 - u_1}}\, d z_2.
  \]
  As we assume that $u_2 - u_1 \geq 2,$ we have that
\begin{equation}
\left|{\tilde K}_e( u_1, y_1 ; u_2, y_2) \right|
\leq 
    \frac{J(u_2,y_2)}{J(u_1,y_1)}
    e^{2 \Re \tau (y_2 - y_1)}
    2^{u_1 - u_2}
    \int_0^\infty |(\tau + it)|^{u_1-u_2}\,dt.
    \label{eq:ke1}
  \end{equation}
  For $t \in [0,|\tau|],$ we bound this integral by taking the supremum.  For $t \geq |\tau|,$ we observe that $|\tau + it| \geq \sqrt{2}|\tau| + \tfrac{1}{\sqrt{2}}(t - |\tau|).$  Integrating, we conclude that
  \begin{align*}
    \int_0^\infty |(\tau + it)|^{u_1-u_2}\,dt.
    &\leq |\tau|^{u_1 -u_2+1}
    + \sqrt{2} \int_0^\infty (\sqrt{2}|\tau| + t)^{u_1 - u_2}\,dt \\
    &\leq 2 |\tau|^{u_1 - u_2 + 1}.
  \end{align*}
  Applying this bound to \eqref{eq:ke1} and using the definition of $J(u,y),$ we can bound
\begin{equation}
\left|{\tilde K}_e( u_1, y_1 ; u_2, y_2) \right|
\leq
2|\tau|
\left(\frac{u_2}{u_1}\right)^{\tfrac{u_1}{2}}
\left(\frac{2|\tau|^2}{eu_2}\right)^{\tfrac{u_1 - u_2}{2}}
e^{2 \Re \tau (y_2 - y_1) - \sqrt{2u_2}y_2 + \sqrt{2u_1}y_1}.
    \label{eq:ke2}
  \end{equation}

  We will now begin the process of substituting $\tau_\alpha$ for $\tau$ and maximizing over $\alpha.$
  Both $y_1$ is not much less than $\sqrt{2u_1},$ and $y_2$ is not much less than $\sqrt{2u_2}.$  Recall that $\Delta = 4(\sqrt{u_2}-\sqrt{u_1})\sum_{i} (y_i - \sqrt{2u_i})_{-}$ for $i=1,2,$ we have the bound that for all $\alpha \in [0,0.5],$
  \begin{equation}
    2 \Re \tau_{\alpha} (y_2 - y_1) - \sqrt{2u_2}y_2 + \sqrt{2u_1}y_1
\leq
-2(\sqrt{u_2} - \sqrt{u_1}) \sqrt{u_2} + 2\alpha (\sqrt{u_2} - \sqrt{u_1})^2 +\Delta.
    \label{eq:ke3}
  \end{equation}

  Define $N(\alpha,t)$ and $H(\alpha,t)$ by
  \begin{align}
    \nonumber
    N(\alpha,t)
    &=
    \left( \alpha + \frac{1-\alpha}{1+t}\right)^2 + 3(1-\alpha)^2\left( 1-\frac{1}{1+t} \right)^2 \\
    H(\alpha,t)
    &=\log(1+t) - \frac{2t+t^2}{2}\log(N(\alpha,t)) - t - \left(\frac{3}{2} -2\alpha\right)t^2.
    \label{eq:ke4}
  \end{align}
  Setting $1+t = \sqrt{u_2/u_1},$ we see that $N$ satisfies
 \begin{align}
    \nonumber
    u_2N(\alpha, \sqrt{u_2/u_1} - 1)
    &=
    \left( \alpha\sqrt{u_2} + (1-\alpha)\sqrt{u_1}\right)^2 + 3(1-\alpha)^2\left( \sqrt{u_2}-\sqrt{u_1} \right)^2 \\
    \nonumber
    &= 2 |\tau_\alpha|^2.
  \end{align}
  Thus combining this bound with \eqref{eq:ke2}, \eqref{eq:ke3}, and \eqref{eq:ke4}, we have
  \begin{equation}
\left|{\tilde K}_e( u_1, y_1 ; u_2, y_2) \right|
\leq
3\sqrt{u_2}
\max_{\alpha \in [0,0.5]}
\exp\left( u_1 H(\alpha, \sqrt{u_2/u_1} - 1) + \Delta \right).
    \label{eq:ke5}
  \end{equation}
  We will see that this bound is monotone increasing in $\alpha$ for $\alpha \in [0,0.5].$  Taking derivatives, we see that
  \begin{align*}
    \partial_\alpha H(\alpha,t) 
    &= 2t^2 - \frac{2t + t^2}{2} \partial_\alpha\left( \log\left( (1+t)^2N(\alpha,t) \right) \right) \\
    &= 2t^2 - (2t + t^2) \frac{t(1+\alpha t)-3(1-\alpha)t^2}{(1+t)^2N(\alpha,t)}\\
    &= \frac{(8\alpha^2 - 16\alpha + 9)t^4+(5-4\alpha)t^3}{(1+t)^2N(\alpha,t)}.
  \end{align*}
  This is positive for all $(\alpha,t) \in [0,0.5]\times[0,\infty),$ and hence we may take $\alpha = 0.5$ in \eqref{eq:ke5}.
%  Hence, $\log( 2 | \tau_\alpha|^2 / u_2) = N(\alpha, \sqrt{u_2/u_1} - 1),$ 
%\begin{align}
%    \nonumber
%    u_2N(\alpha, \sqrt{u_2/u_1} - 1)
%    &=
%    \left( \sqrt{u_2} + \sqrt{u_1}\right)^2 + \frac{3}{4}\left( \sqrt{u_2}-\sqrt{u_1} \right)^2 
%  \end{align}

    %H(\alpha, \sqrt{u_2/u_1} - 1)
    %&=\log\left( \sqrt{\far{u_1}{u_2}}\right) - \frac{2t+t^2}{2}\left( \log(N(\alpha,\sqrt{u_2/u_1} - 1)) + 1 \right).

    Evaluating $N$ and $H$ at $\alpha=0.5,$ we get that
  \begin{align}
    \nonumber
    N(0.5,t)
    &=
    \left( \frac{1+0.5t}{1+t}\right)^2 + \frac{3}{4}\left( 1-\frac{1}{1+t} \right)^2 = 1 - \frac{t}{(1+t)^2}\\
    H(0.5,t)
    &=\log(1+t) - \frac{2t+t^2}{2}\left( \log(N(0.5,t)) + 1 \right).
    \label{eq:ke1000}
  \end{align}
  We will proceed to bound $H(0.5,t)$ from above using the inequality $\log(1+x) \leq x - \frac{x^2}{2(1+x)}$ valid for all $x \geq 0:$
  \begin{align*}
    H(0.5,t)
    & \leq t - \frac{t^2}{2(1+t)} - t - \frac{t^2}{2} + \frac{2t+t^2}{2}\log\left(1 +  \frac{t}{1+t+t^2}\right) \\
    & \leq - \frac{t^2}{2(1+t)} - \frac{t^2}{2} + \frac{2t+t^2}{2}\frac{t}{1+t+t^2} - \frac{2t+t^2}{4}\frac{t^2}{(1+t)^2(1+t+t^2)} \\
    & = \frac{-2t^3 - 5t^4 - 6t^5 - 2t^6}{4(1+t)^2(1+t+t)^2}\\
    & \leq -c_0 t^3/(1+t),
  \end{align*}
  for some sufficiently small constant $c_0 > 0.$  Applying this inequality to \eqref{eq:ke5}, we have that 
  \begin{equation}
\left|{\tilde K}_e( u_1, y_1 ; u_2, y_2) \right|
\leq
3\sqrt{u_2}
\exp\left( -c_0\frac{ (\sqrt{u_2} - \sqrt{u_1})^3}{\sqrt{u_2}} + \Delta \right),
\label{eq:ke6}
  \end{equation}
  which is the desired bound.

  There still remains to handle the case that $y_1 > y_2.$  We recall that in this case we have by \eqref{eq:Ke}
  \[
    {\tilde K}_e( u_1, y_1 ; u_2, y_2)
    =
    -\frac{J(u_2,y_2)}{J(u_1,y_1)}
    \frac{1}{\pi i} \int_{\gamma_{-}^r} \frac{e^{2z_2(y_2 - y_1)}}{(2z_2)^{u_2 - u_1}}\, d z_2,
  \]
  where we recall that $\gamma_{-}^r$ is the contour that follows $\gamma_2^c$ from $\overline{\tau}$ to $\tau.$  As the integrand is integrable in the right half-plane, we may replace this by three sides of a large rectangle whose top and bottom sides are on the lines $\Im z = \Im \tau$ and $\Im z = -\Im \tau.$  As $u_2 > u_1,$ and $y_1>y_2,$ this integral is convergent and we get the representation
  \begin{equation}
     {\tilde K}_e( u_1, y_1 ; u_2, y_2)
    =
    -\frac{J(u_2,y_2)}{J(u_1,y_1)}
    \frac{2}{\pi} \int_{\tau + \R_+} \Im \frac{e^{2z_2(y_2 - y_1)}}{(2z_2)^{u_2 - u_1}}\, d z_2.
    \label{eq:Kf1}
  \end{equation}
  We can now bound in the same way that we bounded ${\tilde K}_e$ when $y_1 \leq y_2,$ i.e.
\begin{equation}
\left|{\tilde K}_e( u_1, y_1 ; u_2, y_2) \right|
\leq
C|\tau|
\left(\frac{u_2}{u_1}\right)^{\tfrac{u_1}{2}}
\left(\frac{2|\tau|^2}{eu_2}\right)^{\tfrac{u_1 - u_2}{2}}
e^{2 \Re \tau (y_2 - y_1) - \sqrt{2u_2}y_2 + \sqrt{2u_1}y_1},
    \label{eq:kf2}
  \end{equation}
  for some absolute constant $C>0.$  Hence, we again get \eqref{eq:ke6} with some other constant in front, and the proof of the lemma is complete. 
\end{proof}

For the supremum of ${\tilde K}_o$ with $u_1 \leq u_2:$
\begin{lemma}
  Suppose that $\yM[i] \geq -c u_i^{1/3}$ for $i=1,2.$ Let $\xi_+(x) = 1/(1+(x)_+),$ and let $\mu(\yM[1],\yM[2]) = \max(\sqrt{(\yM[1])_{-}},\sqrt{(\yM[2])_{-}}).$ 
  There are absolute constants $c, M, T > 0$ so that the following hold. 
  \begin{enumerate}
    \item If $u_2 \geq M u_1,$ then there is an absolute constant $C>0$ so that 
      \[
        \left| {\tilde K}_o( u_1, y_1 ; u_2, y_2) \right|
      \leq C.
    \]
  \item If $u_2 < Mu_1$ and $\sqrt{u_2} - \sqrt{u_1} \geq Tu_2^{1/6}\mu(\yM[1],\yM[2]),$ then there is an absolute constant $C>0$ so that
  \[
    \left| {\tilde K}_o( u_1, y_1 ; u_2, y_2) \right|
    \leq C \frac{u_1^{1/6}u_2^{1/6}}{u_1^{1/6} + u_2^{1/6}}
    \sqrt{ \xi_+( \tilde{y}_1)
    \xi_+( \tilde{y}_2)}.
  \]
    \item If $u_1=u_2,$ then there is an absolute constant $C>0$ so that
      \[
    \left| {\tilde K}_o( u_1, y_1 ; u_2, y_2) \right|
    \leq Cu_1^{1/6}(
    \sqrt{ \xi_+( \tilde{y}_1)
    \xi_+( \tilde{y}_2)}
    +\mu(\yM[1],\yM[2])
    %e^{-c \eta_1 (\tilde{y}_2)_+ -c \eta_2 (\tilde{y}_1)_+}
    e^{c(\eta_2 - \eta_1)(\tilde{y}_2 - \tilde{y}_1) }
    ).
      \]
  \end{enumerate}
   \label{lem:Ko_sup}
\end{lemma}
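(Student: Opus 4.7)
The plan is to adapt the contour-integral analysis of Lemma~\ref{lem:Ko_pointwise} to the regime $u_1 \leq u_2$ treated here. Starting from representation \eqref{eq:odK}, the central question is how to choose the $z_1$- and $z_2$-contours so that $|z_1-z_2|$ remains bounded away from zero while the exponential factors $e^{u_1\G[1]}$ and $e^{-u_2\G[2]}$ still provide enough decay to absorb the saddle shift $\mu(\tilde y_1,\tilde y_2) = \max(\sqrt{(\tilde y_1)_-},\sqrt{(\tilde y_2)_-})$ that appears when $\tilde y_i<0$.

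For case (i), when $u_2 \geq M u_1$, I would take the contours \eqref{eq:contours} with $\delta_1$ chosen very small and $\delta_2$ a fixed positive constant, so that (for $M$ large enough) the entire $z_1$-contour lies strictly inside the convex region bounded by the $z_2$-contour, giving $|z_1-z_2| = \Omega(\sqrt{u_2})$ uniformly. The prefactor $u_1^{1/6}u_2^{1/6}$ coming from the rescaling to $\tilde z_i$ variables in \eqref{eq:rt_tko1} is then at most $O(\sqrt{u_2})$ after dividing by this lower bound, and each of the $\tilde z_i$-integrals is $O(1)$ by the $\xi$-type bounds already worked out in Lemma~\ref{lem:Ko_pointwise} under the hypothesis $\tilde y_i \geq -cu_i^{1/3}$.

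For case (ii), with $u_2 < Mu_1$ and $\sqrt{u_2}-\sqrt{u_1} \geq Tu_2^{1/6}\mu$, I would shift each base point of the contours \eqref{eq:contours} outward along the real axis by an amount of order $u_i^{1/6}\sqrt{(\tilde y_i)_-}$, so that each new contour passes through the real saddle of $u_iG_i(\cdot,\tilde y_i)$. The separation hypothesis is precisely what guarantees that the two shifted contours still do not meet, so that $|z_1-z_2| = \Omega(\sqrt{u_2}-\sqrt{u_1})$ is preserved. Running the one-variable bookkeeping of Lemma~\ref{lem:Ko_pointwise} on the shifted contours, the value of $-u_i\Re \G[i]$ at the saddle contributes $(\tilde y_i)_-^{3/2}$ which cancels the new $e^{c(\tilde y_i)_-^{3/2}}$ factor in $\xi$, producing the claimed product form $\sqrt{\xi_+(\tilde y_1)\xi_+(\tilde y_2)}$.

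Case (iii), $u_1=u_2$, is where the usual denominator separation $u_1^{1/2}-u_2^{1/2}$ collapses, and this is the main obstacle. I would deform $\gamma_1^c$ and $\gamma_2^c$ so that the two shifted saddles sit at real parts differing by a fixed multiple of $u^{1/6}$ times a convenient parameter $\eta_i$, producing a new denominator lower bound $|z_1-z_2| = \Omega(u^{1/6})$ that yields the first $\xi_+$-term. When the two shifted contours are forced to cross (which happens precisely when the relative position of the saddles and the values $\tilde y_i$ have incompatible signs), contour deformation picks up a residue along a short segment; estimating that residue by its length $O(u^{1/6}\mu)$ times the maximum of the integrand on it gives exactly the extra $\mu\, e^{c(\eta_2-\eta_1)(\tilde y_2-\tilde y_1)}$ contribution. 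The delicate point will be verifying that the residue and the saddle-point contribution are really additive rather than interacting, and that the choice of $\eta_i$ can be made uniformly in $\tilde y_i$ throughout the allowed range $\tilde y_i \geq -cu^{1/3}$.
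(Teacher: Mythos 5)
Your high-level plan (deform the contours of \eqref{eq:odK} to pass near the local saddle of the cubic phase, use the resulting improved decay, and for $u_1=u_2$ estimate the residue picked up by the deformation) is indeed the same strategy the paper uses, but several key steps in your sketch are wrong, and they are not small details.

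\textbf{The saddle is imaginary, not real.} When $\yM[i]<0$, the approximate phase is $\tfrac13\zM[i]^3-\zM[i]\yM[i]$, whose critical points are $\zM[i]=\pm\sqrt{\yM[i]}$; since $\yM[i]<0$, these lie on the imaginary axis at $\pm i\eta_i$ with $\eta_i=\sqrt{(\yM[i])_-}$. Your proposal in case (ii) shifts the base point of the contour ``outward along the real axis'' to pass through a ``real saddle.'' That is the wrong object. The paper instead inserts a local segment $\tilde\sigma_i$ along the line $\Im\zM[i]=\lambda\Re\zM[i]+\eta_i$, which passes through $i\eta_i$. Moreover, your claimed cancellation mechanism — that the saddle value $-u_i\Re\G[i]$ contributes $(\yM[i])_-^{3/2}$ which cancels the $e^{C\yM[i]^{3/2}}$ in $\xi$ — fails: at $\zM[i]=i\eta_i$ the real part of $\tfrac13\zM[i]^3-\zM[i]\yM[i]$ is exactly $0$. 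The correct statement (which the paper proves as \eqref{eq:ulocalG1bound}--\eqref{eq:ulocalG2bound}) is that on $\tilde\sigma_i$ one has $\Re u_iG_i\le -c\eta_i u_i^{1/3}(\Re\zM[i])^2+C$, i.e.\ a Gaussian decay transverse to the imaginary axis with an $O(1)$ maximum; from this, together with the logarithmic estimates \eqref{eq:ib1}--\eqref{eq:ib2} for integrals of $1/|z_1-z_2|$, one extracts the $\sqrt{\xi_+(\yM[1])\xi_+(\yM[2])}$ form. There is nothing to cancel.

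\textbf{Case (i): the contours do cross.} With $u_2>u_1$, the vertical ray $\gamma_1^e$ (fixed real part $\approx\sqrt{u_1/2}$, going to $+i\infty$) necessarily intersects the horizontal ray $\gamma_2^e$ (fixed imaginary part $\approx\sqrt{3u_2/2}$, going to $-\infty$), regardless of how small $\delta_1$ is. So the claim that $|z_1-z_2|=\Omega(\sqrt{u_2})$ uniformly on $\gamma_1^c\times\gamma_2^c$ is false. The paper handles this by splitting $\sigma_1^c\times\sigma_2^c$ into the set $Q$ where the separation holds and the remaining piece $V\cap\sigma_1^e\times\sigma_2^m$ near the crossing; on the latter the exponential decay of $\psi_i$ (the contours are far from the saddle) compensates. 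You cannot avoid dealing with the crossing.

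\textbf{Case (iii).} Your description is directionally right (residue from a short segment, bounded by length times sup of integrand), but it does not account for where the crucial $e^{c(\eta_2-\eta_1)(\yM[2]-\yM[1])}$ exponent comes from: this requires locating the intersection point $\zeta$ of the shifted contours, relating $\sgn(\Re\tilde\zeta-\sqrt{u_1/2})$ to $\sgn(y_2-y_1)$, and bounding $\Re\tilde\zeta$ in each of the three intersection configurations. The worry you flag about ``additive rather than interacting'' contributions is not a real obstruction — they are integrals over disjoint contour pieces — and meanwhile the actual delicacy (the sign structure of $\Re\tilde\zeta(\yM[2]-\yM[1])$) goes unaddressed. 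As written, the proposal does not constitute a proof.
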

\begin{proof}

  The contours $\gamma_i^c$ are insufficient for this task, as 
  when $\yM[i] < 0$ the contours $\gamma_i^e$ become poor approximations of the true steepest descent contours.  These errors occur in a $\zM[i]$-neighborhood of $0$ of magnitude $O(\sqrt{-\yM[i]}),$ which we can fix by a simple local contour deformation.  
  
  From \eqref{eq:taylor}, we have that
  \[
  u_i \G[i] = -\zM[i]\yM[i] + \frac{1}{3}\zM[i]^3 + O(u_i^{-1/3}\zM[i]^4).
  \]
  Fix $\lambda$ with $\sqrt{3} > \lambda \geq \tfrac{1}{\sqrt{3}},$ a constant to be determined later.
  Set $\tilde{\sigma}_1$ to be the line segment of $\Im \zM[1] = \lambda \Re \zM[1] + \eta_1$ with $\eta_i = \sqrt{(\yM[i])_{-}}$ for $i=1,2$ that connects the real axis to the line through $\tilde{\gamma}_1.$  The point of intersection with the line through $\tilde{\gamma}_1$ occurs at distance $\Theta(\eta_1).$  Hence, on this line segment $O(u_1^{-1/3}\zM[1]^4) = O(1)$ by assumption on $\yM[i],$ and we have uniformly in $\lambda \geq \frac{1}{\sqrt{3}}:$
  \begin{equation}
    \Re u_1 \G[1] \leq \frac{-2\eta_1 u_1^{1/3}}{\sqrt 3} (\Re \zM[1])^2 + C
    \label{eq:ulocalG1bound}
  \end{equation}
  for some absolute constant $C>0$ and all $\zM[1] \in \tilde{\sigma}_1.$
  Likewise, we define $\tilde{\sigma}_2$ to be the line segment of $\Im \zM[2] = -\lambda \Re \zM[2] + \eta_2.$  Doing a similar Taylor expansion, we can see that
\begin{equation}
  -\Re u_2 \G[2] \leq \frac{-2\eta_2 u_2^{1/3}}{\sqrt 3} (\Re \zM[2])^2 + C
  \label{eq:ulocalG2bound}
\end{equation}
for some absolute constant $C>0,$ all $\zM[2] \in \tilde{\sigma}_2$ and all $\lambda \geq \frac{1}{\sqrt{3}}.$  

Define $\sigma_1$ and $\sigma_2$ to be the images of $\tilde{\sigma}_1$ and $\tilde{\sigma}_2$ under the changes of variables $\zM[1] \mapsto z_1$ and $\zM[2] \mapsto z_2.$  
The intersections of $\sigma_i$ and the line through $\gamma_i$ occur at distance $O(u_i^{1/6}\eta_i) = O(u_i^{1/3}).$  Thus by taking $c>0$ sufficiently small, we can assure that $\sigma_i$ and $\gamma_i$ intersect.  Let $\sigma_i^m$ be the portion of $\gamma_i$ between its intersection with $\sigma_1$ and $\gamma_i^e,$ and let $\sigma_i^e=\gamma_i^e.$  Finally, define
\begin{align*}
  \sigma_1^c &= \overline{\sigma_1^e} \cup \overline{\sigma_1^m} \cup \overline{\sigma_1} \cup \sigma_1\cup \sigma_1^m \cup \sigma_1^e, \\
  \sigma_2^c &= \overline{\sigma_2^e} \cup \overline{\sigma_2^m} \cup \overline{\sigma_2} \cup \sigma_2\cup \sigma_2^m \cup \sigma_2^e, %  \label{}
\end{align*}
oriented in the same way as $\gamma_i^c.$
For notational convenience, when for either $i=1,2$ $\yM[i] \geq 0,$ we let $\sigma_i = \emptyset,$ $\sigma_i^m = \gamma_i$ and $\sigma_i^e = \gamma_i^e.$  

Define 
\begin{equation}
  {\tilde K}_\sigma = \frac{1}{2(\pi i)^2}
  \int_{\sigma_2^c}\int_{\sigma_1^c}
  \frac{e^{u_1\G[1]}}{e^{u_2\G[2]}} \frac{dz_1 dz_2}{z_1 - z_2},
  \label{eq:sigmaK_o}
\end{equation}
and set $\RES = {\tilde K}_o - {\tilde K}_\sigma.$

Fix an $\epsilon > 0$ and define 
  \begin{equation}
    \psi_i(\zM[i],\yM[i]) = \begin{cases} 
      \exp(-\epsilon \eta_i u_i^{1/3}(\Re  \zM[i])^2) &\text{ if } \zM[i] \in \sigma_i \cup \overline{\sigma_i}, \\
      \exp(-\epsilon(|\zM[i]|^3 -  |\zM[i]|(\yM[i])_+ )) & \text{ if }\zM[i] \in \sigma_i^m \cup \overline{\sigma_i^m}, \\
      \exp(-\epsilon u_i^{1/3}(|\zM[i]|^2 + (\yM[i])_+)) & \text{ if } \zM[i] \in \sigma_i^e \cup \overline{\sigma_i^e},
  \end{cases}
    \label{eq:psi}
  \end{equation}
  for $i=1,2.$
  Using \eqref{eq:localGbound} and \eqref{eq:localGbound2} on $\sigma_i^m,$
  \eqref{eq:Gbound} and \eqref{eq:Gbound2} on $\sigma_i^e,$ and
  \eqref{eq:ulocalG1bound} and \eqref{eq:ulocalG2bound} on $\sigma_i,$
  we get that 
\[
  \sup_{{\sigma}_2^c \times {\sigma}_1^c} \biggl\{ \left|
\frac{e^{u_1 G(\zM[1],\yM[1])}}{e^{u_2 G(\zM[2],\yM[2])}} 
\right|\frac{1}{\psi_1(\zM[1],\yM[1])\psi_2(\zM[2],\yM[2])}
\biggr\}  \leq C,
\]
provided $\epsilon >0$ is chosen sufficiently small, $C>0$ is chosen sufficiently large.
%, and $\lambda$ is chosen sufficiently close to $\sqrt{3}.$
%{\color{red} Explain why $\lambda$ should be close to $\sqrt{3},$ on $\sigma_i^m$  }

Recall that $\tau$ is always at least distance $\Omega( (\sqrt{u_2} - \sqrt{u_1}))$ in $z_i$ coordinates from either of $\sqrt{u_i/2}$ (see \eqref{eq:tauhalf}), and the intersection of $\sigma_i$ and $\gamma_i$ occurs at distance $O(u_i^{1/6}\eta_i).$ 
Hence in the case $\sqrt{u_2} - \sqrt{u_1} \geq Tu_2^{1/6}\cdot \max(\sqrt{(\yM[1])_{-}},\sqrt{(\yM[2])_{-}}),$ we may choose $T>0$ sufficiently large so that the intersections of $\sigma_i$ and $\gamma_i$ occur at points of smaller imaginary part than $\tau.$  In this case, we can perform the deformation from $\gamma_i^c$ to $\sigma_i^c$ without producing any additional residues, so $\RES = 0.$

In the case that $u_1 = u_2,$ we may acquire a residue, which will be given by
\begin{equation}
  \RES(u_1,y_1 ; u_1, y_2) =
    -\frac{J(u_1,y_2)}{J(u_1,y_1)}
  \frac{1}{\pi i} \int_{\ell}{e^{2z_2(y_2 - y_1)}}\, d z_2,
  \label{eq:RES}
\end{equation}
where, after deformation, $\ell$ is a vertical line segment connecting the intersection of $\overline{\sigma_1 \cup \sigma_1^m}$ and $\overline{\sigma_2 \cup \sigma_2^m}$ to the intersection of $\sigma_1 \cup \sigma_1^m$ and $\sigma_2 \cup \sigma_2^m.$  Denote the point of intersection between $\sigma_1 \cup \sigma_1^m$ and $\sigma_2 \cup \sigma_2^m$ by $\zeta.$  If the intersection is between $\sigma_1^m$ and $\sigma_2^m,$ it necessarily occurs at $\zeta = \sqrt{u_1/2},$ in which case there is no residue. Note that each of these contours cross the line $\Re z_1 = \sqrt{u_1/2}$ at $i\eta_1u_1^{1/6}/\sqrt{2}$ and $i\eta_2u_1^{1/6}/\sqrt{2}$ respectively, as it must be $\sigma_1$ and $\sigma_2$ that cross this vertical line.  In particular, we have 
\(
\sgn( \Re \zeta - \sqrt{u_1/2}) = -\sgn(y_2 - y_1).
\)
Further, by this observation, we must have 
\(
\Im \zeta \leq \max(\eta_1, \eta_2)u_1^{1/6}/\sqrt{2}.
\)

With these estimates, we turn to bounding $\RES.$ By a supremum bound of the integrand of \eqref{eq:RES}, we have
\begin{equation*}
  |\RES(u_1,y_1 ; u_1, y_2)| \leq 
  \frac{2}{\pi}\Im \zeta \cdot e^{2(\Re \zeta - \sqrt{u_1/2})(y_2-y_1)}. 
  %\label{eq:RES2}
\end{equation*}
Let $\tilde \zeta$ be the position  $\zeta$ in $\zM[1]$ coordinates, so that
\begin{equation*}
  |\RES(u_1,y_1 ; u_1, y_2)| \leq 
  \frac{2}{\pi}\Im \zeta \cdot e^{(\Re {\tilde \zeta})(\yM[2]-\yM[1])}. 
  %\label{eq:RES2}
\end{equation*}
There are three possibilities for the location of $\tilde \zeta,$ at the intersection of
\(
\sigma_1 \cap \sigma_2,
\)
\(
\sigma_1^m \cap \sigma_2,
\)
or
\(
\sigma_1 \cap \sigma_2^m.
\)
In each of these cases, we get that $\Re {\tilde \zeta}$ is, respectively, the first, second or third entry of
\[
	\left( 
	\frac{\eta_2 - \eta_1}{2\lambda},
	\frac{\eta_2}{\sqrt{3}+\lambda},
	\frac{-\eta_1}{\sqrt{3}+\lambda}
	\right).
\]
If $\left\{ \zeta \right\} = \sigma_1^m \cap \sigma_2,$ then we have that $\Re \zeta > 0.$  In particular, it must be that $\sigma_1 \cup \sigma_1^m$ crosses $\Re \zm[1] = \sqrt{u_1/2}$ below $\sigma_2 \cup \sigma_2^m,$ and hence $\eta_2 > \eta_1.$  Hence if $\eta_2 > 0,$ we must have $\ym[2] < \ym[1].$  Thus in this case we conclude, for any values of $\yM[i],$ that
\[
	\Re {\tilde \zeta}(\yM[2]-\yM[1]) 
	= \frac{\eta_2}{\sqrt{3}+\lambda}(\yM[2]-\yM[1])
	\leq \frac{\eta_2-\eta_1}{\sqrt{3}+\lambda}(\yM[2]-\yM[1])
\]
The same conclusion holds if instead $\left\{ \zeta \right\} = \sigma_1 \cap \sigma_2^m.$
As $\sgn(\tilde \zeta(\yM[2]-\yM[1])) \leq 0,$ we can therefore bound, in all three cases,
\[
\Re {\tilde \zeta}(\yM[2]-\yM[1]) \leq \frac{\eta_2-\eta_1}{\sqrt{3}+\lambda}(\yM[2]-\yM[1])
\]
Hence we reach the conclusion
\begin{equation}
  |\RES(u_1,y_1 ; u_1, y_2)| \leq u_1^{1/6}\max(\eta_1, \eta_2)
    e^{c(\eta_2 - \eta_1)(\tilde{y}_2 - \tilde{y}_1) }
  \label{eq:RES3}
\end{equation}
with $c = (\sqrt{3} + \lambda)^{-1}.$

  From the definition of $\psi,$ we have
\begin{equation}
  \left| {\tilde K}_\sigma(u_1,y_1;u_2,y_2) \right| \leq
  C\int\limits_{\sigma_2^c \times \sigma_1^c} \left|\frac{ \psi_1(\zM[1],\yM[1])\psi_2(\zM[2],\yM[2]) dz_1 dz_2}{z_1 - z_2}\right|.
  \label{eq:kos1}
\end{equation}
We will begin by showing that there is a $C>0$ so that
\begin{equation}
\int\limits_{\sigma_2^c \times \sigma_1^c} \left|\frac{ \psi_1(\zM[1],\yM[1]) \psi_2(\zM[2],\yM[2]) dz_1 dz_2}{z_1 - z_2}\right|
< C(u_1^{1/6} + u_2^{1/6})
\max(
\xi_+(\yM[1]),
\xi_+(\yM[2]))
\label{eq:kos2}
\end{equation}
for all $u_1,u_2.$  This combined with \eqref{eq:RES3} will complete the $u_1 = u_2$ part of the proof. 

Let $\mu(x)$ be an absolutely continuous finite measure on $\R$ with connected support and with density at most $1.$  The following bound holds for all $c, y>0$ and all such $\mu:$
%\begin{align}
%  \int_\R \frac{d\mu(x)}{|cx+iy|} &\leq \inf_{R >0} \left\{ \frac{2}{c}\log\left( \frac{2cR}{y} + 1 \right) + \frac{\mu(\R)}{\sqrt{R^2c^2 + y^2}} \right\} 
%  \label{eq:ib1}
%  \\
%  \int_\R \log\left( 1+ \frac{c}{|x|} \right)\,d\mu(x) &\leq (1 + c + \mu(R)) \log 4.
%  \label{eq:ib2}
%\end{align}
\begin{align}
  \int_\R \frac{d\mu(x)}{|cx-z|} &\leq \inf_{R >0} \left\{ \frac{8}{c}\log\left( 1+\frac{cR}{d(z,\supp(\mu))} \right) + \frac{2\mu(\R)}{Rc + d(z,\supp(\mu))} \right\}. 
  \label{eq:ib1}
\end{align}
This can be checked by letting $x_0 \in \supp(\mu)$ achieve the minimum distance to $z$ and dividing the integral into an interval around $x_0$ of radius $R$ and the rest of $\R.$  

Both of $\int_{\sigma_i^c} |\psi(\zM[i])\,d\zM[i]| \leq C\xi_+(\yM[i])$ for $i=1,2$ are bounded above by some universal constant $C>0$.  For $\tilde{\sigma}_i^m$ and $\tilde{\sigma}_i^e$ this is clear.  For $\tilde{\sigma}_i,$ the $\eta_i$ in the exponent in $\psi_i$ may cause worry, but the length of the segment is only $O(\eta_i),$ from which one can show that the contribution of this segment to the integral is at most $O(1/u_i^{1/12}).$

Hence for $z_2 \notin \sigma_1^c,$ we can apply \eqref{eq:ib1} to each of the $6$ straight segments of $\sigma_1^c$ to get
\begin{align}
\int\limits_{\sigma_1^c} \left|\frac{ \psi_1(\zM[1],\yM[1]) dz_1}{z_1 - z_2}\right|
&=
\int\limits_{\sigma_1^c} \left|\frac{ \psi_1(\zM[1],\yM[1]) 2^{-1/2}u_1^{1/6} d\zM[1]}{2^{-1/2}u_1^{1/6}\zM[1] + \sqrt{u_1/2} - z_2}\right| \nonumber \\
&\leq 
C \left[\log\left( 1+\frac{Ru_1^{1/6}}{d(z_2,\sigma_1^c)} \right) + \frac{u_1^{1/6}\xi_+(\yM[1])}{Ru_1^{1/6} + d(z_2,\sigma_1^c)} \right] \nonumber \\ 
&\leq 
C \left[\log\left( 1+\frac{Ru_1^{1/6}}{d(z_2,\sigma_1^c)} \right) + \frac{\xi_+(\yM[1])}{R}\right] 
  \label{eq:kos3}
\end{align}
for some absolute constant $C>0$ and any $R>0.$

Under the same assumptions as in \eqref{eq:ib1}, we also have
\begin{equation}
 \int_\R \log\left( 1+ \frac{c}{|x|} \right)\,d\mu(x) \leq (c + \mu(\R)) \log 4.
  \label{eq:ib2}
\end{equation}
We apply this to the integral of \eqref{eq:kos3} over $\sigma_2^c.$  We show the bound explicitly for $\sigma_2^m;$ analogous bounds hold for the other segments.  Set $\zeta \in \sigma_2^m$ to be the point that achieves the minimum distance $\min_{z_2 \in \sigma_2^m} d(z_2,\sigma_1^c).$  This point is unique and we have that $d(z_2, \sigma_1^c) \geq c_0 d(z_2,\zeta)$ for some $c_0 > 0$ and all $z_2 \in \sigma_2^m.$  
Let $\tilde \zeta$ be the image of $\zeta$ under the change of variables $z_2 \mapsto \zM[2]$
Hence, changing variables and applying \eqref{eq:ib2}, there is an absolute constant $C>0$ so that 
\begin{align}
  \int_{\sigma_2^m} \psi(\zM[2]) \log\left( 1+\frac{Ru_1^{1/6}}{d(z_2,\sigma_1^c)} \right)\,|dz_2|
  &\leq
  \int_{{\tilde \sigma}_2^m} \psi(\zM[2]) \frac{u_2^{1/6}}{2^{1/2}}
  \log\left( 1+\frac{u_1^{1/6}2^{1/2}R}{c_0u_2^{1/6} |\zM[1] - \tilde{\zeta}|}
\right)\,|d\zM[2]| \nonumber \\
&\leq Cu_2^{1/6}\left( \xi_+(\yM[2]) + \frac{Ru_1^{1/6}}{u_2^{1/6}} \right). 
\label{eq:kos4}
\end{align}

Combining this with \eqref{eq:kos3} we have
\begin{equation}
\int\limits_{\sigma_2^c \times \sigma_1^c} \left|\frac{ \psi_1(\zM[1],\yM[1]) \psi_2(\zM[2],\yM[2]) dz_1 dz_2}{z_1 - z_2}\right|
< C\left( 
u_2^{1/6} \xi_+(\yM[2])
+ Ru_1^{1/6}
+\frac{\xi_+(\yM[1])\xi_+(\yM[2])}{R}
\right).
\label{eq:kos5}
\end{equation}
Taking $R = \sqrt{ \xi_+(\yM[1])\xi_+(\yM[2])},$ and noting that we could run the same argument by integrating over $\zM[2]$ first, we find this is equivalent to what we set out to show in \eqref{eq:kos2}.  This completes cases (2) and (3), as this for any $M>1,$ this bound is equivalent to the stated one in case (2) after adjusting constants.  

Finally, we turn to case (1), in whose proof we will determine $M.$
Let $V = \left\{ \zM[1] : Mu_1^{1/3} \leq \zM[1] \right\}.$  By making $M$ sufficiently large, we can assure that for all $u_2 \geq Mu_1 \geq u_0$ for some large $u_0:$
\begin{enumerate}
  \item $V \cap \sigma_1^c = V \cap (\sigma_1^e \cup \overline{\sigma_1^e}).$
  \item $\tau$ is the intersection of $\sigma_1^e$ and $\sigma_2^m.$  
  \item $2M u_1^{1/3} < |\tau|.$
\end{enumerate}
%{\color{red} There is probably some nonsense in here after changing definitions.}

It follows that for any $(z_1,z_2) \in Q = (\sigma_1^c \times \sigma_2^c) \setminus (V \times (\sigma_2^m \cup \overline{\sigma_2^m})),$ we have that there is some $c_0(M)$ so that $|z_1 - z_2| \geq c_0(M)u_2^{1/2}.$
Hence for $z \in Q,$ we have
\begin{equation}
\int\limits_{Q} \left|\frac{ \psi_1(\zM[1],\yM[1]) \psi_2(\zM[2],\yM[2]) dz_1 dz_2}{z_1 - z_2}\right|
\leq \frac{u_1^{1/6} u_2^{1/6}}{2c_0(M)u_2^{1/2}},
\label{eq:kosQ}
\end{equation}
which is negligible.  Meanwhile, on either of $V \cap \sigma_1^c$ or $\sigma_2$, we have that $|\zM[i]| = \Omega(u_i^{1/3}),$ for $i=1,2.$  Hence
\begin{align}
  \int_{V \cap \sigma_1^e} |\psi_1(\zM[1],\yM[1])\,d\zM[1]| &\leq \frac{C}{u_1^{1/6}}, \nonumber \\
  \int_{\sigma_2^m} |\psi_2(\zM[2],\yM[2])\,d\zM[2]| &\leq \frac{C}{u_2^{1/6}},
  \nonumber %\label{eq:kos6}
\end{align}
for some absolute constant $C>0.$  
Hence for $z_2 \neq \tau,$ setting $R = \sqrt{2}u_1^{-1/6}$ in \eqref{eq:ib1} implies that
\begin{align}
  \int\limits_{V \cap \sigma_1^e} \left|\frac{ \psi_1(\zM[1],\yM[1]) dz_1}{z_1 - z_2}\right|
&=
\int\limits_{V \cap \sigma_1^e} \left|\frac{ \psi_1(\zM[1],\yM[1]) 2^{-1/2}u_1^{1/6} d\zM[1]}{2^{-1/2}u_1^{1/6}\zM[1] + \sqrt{u_1/2} - z_2}\right| \nonumber \\
&\leq 
C \left[\log\left( 1+\frac{1}{d(z_2,\sigma_1^e)} \right) + \frac{1}{1 + d(z_2,\sigma_1^e)} \right] \nonumber \\ 
&\leq 
C \left[\log\left( 1+\frac{1}{d(z_2,\sigma_1^e)} \right) + 1 \right] 
  \nonumber %\label{eq:kos7}
\end{align}
for some absolute constant $C>0.$  Thus by \eqref{eq:ib2}, we have that
\[
  \int\limits_{V \cap (\sigma_1^e \times \sigma_2^m)} 
  \left|\frac{ \psi_1(\zM[1],\yM[1]) \psi_2(\zM[2],\yM[2])\, 
  dz_1dz_2}{z_1 - z_2}\right| \leq C
\]
for some absolute constant $C >0.$  Combining this with \eqref{eq:kos5}, we have the desired bound.

%If $\tau$ exists, then for $z_2 \in \gamma_2 \cup \gamma_2^e$ we can bound $d(z_2,\gamma) \geq c_0 d(z_2, \tau)$ for some absolute constant $c_0 >0.$  Otherwise, if $\tau$ does not exist (i.e. $u_1 > u_2$), we can bound $d(z_2,\gamma) \geq c_0 d(z_2,\sqrt{u_2/2}).$
%These can be checked by dividing either integral into the interval $[-R,R]$ and its complement and bounding accordingly.  In the second, we take $R=c.$

%as the contours always intersect at isolated points, from which it follows that $\frac{1}{z_1 - z_2}$ is integrable.

\end{proof}

\section{Decorrelation estimate proofs}
\label{sec:decorproof}
In what follows, we set 
    \[
      I_M =\bigcup_{i=1,2} 
  \left\{ u_i \right\} \times 
  \left(\sqrt{2u_i} + u_i^{-1/6}[t_i/\sqrt{2}, (\log u_1 )^{100}]\right).
\]
We also define 
\[
  E_M(u_1,t_1; u_2, t_2)
=
  \left|
  \det( I - \tilde{K}\vert_{I_M})
  -\det( I - {\tilde K}^D\vert_{I_M})
  \right|
\]
By Lemma~\ref{lem:levuppertail}, for all $t_i \leq (\log u_1)^{100}$
\begin{equation}
  \left|
  E(u_1,t_1; u_2, t_2)
  -
  E_M(u_1,t_1; u_2, t_2)
  \right|
  \leq 2Ce^{-\log(u_1)^{150}/C}.
  \label{eq:EvsEM}
\end{equation}
This is smaller than the bounds we wish to show for $E$, and hence it suffices to show the bounds for $E_M.$  

 For trace class kernels $K,L$ on $L^2(I),$ recall that that the $2$-regularized determinant $\det_2(I - K ) = \det( I - K)e^{-\tr K}.$  These determinants satisfy the following perturbation bound:
  \begin{equation}
    \left|
    \det_2( I - K ) 
    -\det_2( I - L ) 
    \right|
    \leq
    \HSN{K - L}\exp \left( \frac{1}{2}( 1 + \HSN{K} + \HSN{L})^2 \right),
    \label{eq:det2lip}
  \end{equation}
  see~\cite[p. 196]{GGK}.  %Here $\HSN{\cdot}$ is the Hilbert-Schmidt norm of the operator defined by the kernel, i.e.\,the $L^2( I \times I)$ norm of the kernel.

  To apply \eqref{eq:det2lip}, we begin by estimating the Hilbert-Schmidt norm of $\tilde{K}^D \vert_{I_M}.$

\begin{lemma}
	Provided that $u_1 \geq u_2 + u_2^{2/3} e^{(\log u_1)^{2/3}},$
	then uniformly in $t_i \geq -(\log u_i)^{5/12},$
	\[
\HSN{ \tilde{K}_e \vert_{I_M} }^2 = 
O(e^{ -\Omega( \exp((\log u_1)^{2/3}) }).
	\]
	\label{lem:KeHS}
\end{lemma}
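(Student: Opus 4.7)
The plan is to reduce the Hilbert--Schmidt estimate to the pointwise bound of Lemma~\ref{lem:Ke_sup}, applied to the unique nonzero off-diagonal block of $\tilde{K}_e$.

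First, by the discussion following~\eqref{eq:odK}, $K_e \equiv 0$ whenever the first line index is at least the second. Since $u_1 > u_2$ under our hypothesis, the diagonal blocks and $\tilde{K}_e(u_1,\cdot\,;u_2,\cdot)$ all vanish; only $\tilde{K}_e(u_2,\cdot\,;u_1,\cdot)$ contributes to $\HSN{\tilde{K}_e\vert_{I_M}}^2$. Applying Lemma~\ref{lem:Ke_sup} with the roles of $u_1$ and $u_2$ swapped gives the pointwise bound
\[
|\tilde{K}_e(u_2,y_2;u_1,y_1)|^2 \leq C u_1 \exp\!\left(-\frac{2(\sqrt{u_1}-\sqrt{u_2})^3}{C\sqrt{u_1}} + 2\Delta\right),
\]
with $\Delta = 4(\sqrt{u_1}-\sqrt{u_2})\bigl[(y_1-\sqrt{2u_1})_- + (y_2-\sqrt{2u_2})_-\bigr]$.

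Next, set $a := \exp((\log u_1)^{2/3})$. From the definition of $I_M$ together with the assumption $t_i \geq -(\log u_i)^{5/12}$, one has $(y_i-\sqrt{2u_i})_- \leq u_i^{-1/6}(\log u_i)^{5/12}/\sqrt{2}$ uniformly on $I_M$, whence
\[
\Delta \leq C(\sqrt{u_1}-\sqrt{u_2})\, u_2^{-1/6}(\log u_1)^{5/12}.
\]
The hypothesis $u_1 - u_2 \geq u_2^{2/3}a$ gives, on separately considering $u_1 \leq 2u_2$ and $u_1 > 2u_2$, the lower bound $\sqrt{u_1} - \sqrt{u_2} \geq c\min(u_2^{1/6}a,\, \sqrt{u_1})$. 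In both cases a short arithmetic check shows the ratio of the leading decay term $(\sqrt{u_1}-\sqrt{u_2})^3/\sqrt{u_1}$ to $\Delta$ is at least $\Omega(\min(a^2,\sqrt{u_1})/(\log u_1)^{5/12})$, which tends to infinity. Hence the $\Delta$ contribution is negligible, and for $u_1$ sufficiently large,
\[
\frac{(\sqrt{u_1}-\sqrt{u_2})^3}{C\sqrt{u_1}} - 2\Delta \geq c\min(u_1,\, a^3) \geq c\exp\bigl(3(\log u_1)^{2/3}\bigr).
\]

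Finally, the Lebesgue measure of $I_M \cap (\{u_i\}\times \R)$ in the $y_i$-variable is $O(u_i^{-1/6}(\log u_1)^{100})$, so integrating the squared pointwise bound over $I_M \times I_M$ yields
\[
\HSN{\tilde{K}_e\vert_{I_M}}^2 \leq C u_1^{5/6}(\log u_1)^{200}\exp\bigl(-c\, e^{3(\log u_1)^{2/3}}\bigr) = O\bigl(e^{-\Omega(\exp((\log u_1)^{2/3}))}\bigr),
\]
as desired. The only non-routine step is the case analysis verifying that the leading decay from Lemma~\ref{lem:Ke_sup} uniformly swamps the drift $\Delta$ in both regimes $u_1 \leq 2u_2$ and $u_1 > 2u_2$; this check is essentially what motivates the choice of the exponent $(\log u_1)^{2/3}$ in the separation hypothesis of the proposition.
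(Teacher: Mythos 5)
Your proof is correct and follows essentially the paper's own strategy: reduce to the single non-vanishing block $\tilde K_e(u_2,\cdot\,;u_1,\cdot)$, apply the pointwise bound of Lemma~\ref{lem:Ke_sup}, show the drift $\Delta$ is swamped by the leading decay, and integrate over the set $I_M$ of polynomial (in $\log u_1$) measure. The one place you are slightly more careful than the paper is the step controlling $\Delta$: the paper writes the intermediate chain $\Delta \leq 8(\sqrt{u_1}-\sqrt{u_2})u_2^{-1/6}(\log u_2)^{5/12} \leq 8 e^{(\log u_1)^{2/3}}(\log u_2)^{5/12}$, whose second inequality is not uniform (it can fail when $u_1 \gg u_2$, although the final conclusion still holds because the leading term grows even faster); your two-case argument showing the ratio of $(\sqrt{u_1}-\sqrt{u_2})^3/\sqrt{u_1}$ to $\Delta$ diverges handles this cleanly in all regimes and closes that small gap.
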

\begin{proof}
For ${\tilde K}_e$ we have, in the notation of Lemma~\ref{lem:Ke_sup}, 
\begin{align*}
  \Delta 
  &\leq 8( \sqrt{u_1} - \sqrt{u_2})u_2^{-1/6}\log(u_2)^{5/12} \\
  &\leq 8e^{(\log u_1)^{2/3}} (\log u_2)^{5/12}.
\end{align*}
The condition that $u_1 \geq u_2 + u_2^{2/3} e^{(\log u_1)^{2/3}}$ implies that $u_1 \geq u_2 + 0.5u_1^{2/3} e^{(\log u_1)^{2/3}}$ once $u_0$ is made sufficiently large, and hence
\[
-\frac{ (\sqrt{u_1} - \sqrt{u_2})^3}{\sqrt{u_1}}
\leq 
-\frac{ (u_1 - u_2)^3 }{8u_1^2}
\leq
-\frac{ e^{3(\log u_1)^{2/3}} }{64}.
\]
Hence we have by Lemma~\ref{lem:Ke_sup} that
\(
\left|{\tilde K}_e( u_2, y_2 ; u_1, y_1) \right|
\leq
e^{ -\Omega( \exp((\log u_1)^{2/3})) }
\)
uniformly over $I_M.$  As we may assure that $\eta > \tfrac 13,$ we have that
\begin{equation}
\HSN{ \tilde{K}_e \vert_{I_M} }^2 = 
O(e^{ -\Omega( \exp((\log u_1)^{2/3}) }),
  \label{eq:tKe}
\end{equation}
as the measure of $I_M$ is $O((\log u_1)^{200}).$
\end{proof}

\begin{lemma}
Provided that $u_1 \geq u_2 + u_2^{2/3} e^{(\log u_1)^{2/3}},$
	then uniformly in $t_i \geq -(\log u_i)^{5/12},$
	\[
\HSN{ \tilde{K}_o \vert_{I_M} }^2 = 
O\left( \log(u_1)^{5/6} \right).
	\]
	\label{lem:KoHS}
\end{lemma}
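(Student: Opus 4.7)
The plan is to decompose $\HSN{\tilde{K}_o|_{I_M}}^2$ into the sum of four squared block Hilbert--Schmidt norms (over the $2 \times 2$ pairing of lines $\{u_1\}, \{u_2\}$) and bound each block separately.  The diagonal blocks contribute $o((\log u_1)^{5/6})$ via a trace argument, while each off-diagonal block contributes the main $O((\log u_1)^{5/6})$ term using Lemma~\ref{lem:Ko_sup}.

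For the diagonal block on line $u_i$, the kernel $\tilde{K}_o(u_i, \cdot\,; u_i, \cdot) = \tilde{K}(u_i, \cdot\,; u_i, \cdot)$ is conjugate, via multiplication by $e^{\pm f(y)}$ with $f(y) = \sqrt{2u_i}\,y - y^2/2$, to the self-adjoint Hermite projection $P_i$.  A direct calculation gives $f(y) - u_i = -u_i^{-1/3}\tilde{y}^2/4 = o(1)$ uniformly for $\tilde{y} \in [t_i, (\log u_1)^{100}]$, so the similarity factors are $1 + o(1)$ in operator norm.  Hence
\[
\HSN{\Proj{i}\tilde{K}\Proj{i}}^2 \leq (1+o(1))\,\tr(\Proj{i} P_i \Proj{i}),
\]
using $\HSN{\cdot}^2 \leq \tr$ for a projection.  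The trace equals the expected number of GUE eigenvalues in the window, which by the semicircle edge density is $O(|t_i|^{3/2}) = O((\log u_1)^{5/8}) = o((\log u_1)^{5/6})$.

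For the off-diagonal blocks, apply Lemma~\ref{lem:Ko_sup}.  Our hypothesis $u_1 - u_2 \geq u_2^{2/3}e^{(\log u_1)^{2/3}}$ together with $\mu(\tilde{y}_1, \tilde{y}_2) \leq (\log u_1)^{5/24}$ ensures the separation hypothesis of case~(2) for $u_1$ large.  In case~(1) ($u_1 \geq M u_2$), the bound $|\tilde{K}_o| = O(1)$ yields block squared Hilbert--Schmidt norm $O((\log u_1)^{200}/u_1^{1/3}) = o(1)$.  In case~(2), $|\tilde{K}_o| \leq C \frac{u_1^{1/6}u_2^{1/6}}{u_1^{1/6} + u_2^{1/6}}\sqrt{\xi_+(\tilde{y}_1)\xi_+(\tilde{y}_2)}$ with $\xi_+(x) = 1/(1+(x)_+)$; squaring and integrating against $d\mu_1\,d\mu_2 = d\tilde{y}_1\,d\tilde{y}_2/(2u_1^{1/6}u_2^{1/6})$, the $u^{1/3}$ prefactor of $|\tilde{K}_o|^2$ cancels against the product measure, so that
\[
\HSN{\cdot}^2 \leq C\Bigl(\int_{t_1}^{(\log u_1)^{100}} \xi_+(\tilde{y})\,d\tilde{y}\Bigr)\Bigl(\int_{t_2}^{(\log u_1)^{100}} \xi_+(\tilde{y})\,d\tilde{y}\Bigr) = O((\log u_1)^{5/6}),
\]
since each factor is at most $|t_i| + \log(1 + (\log u_1)^{100}) = O((\log u_1)^{5/12})$.

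The main technical point concerns the off-diagonal block in which the first argument carries the larger index $u_1 > u_2$: although Lemma~\ref{lem:Ko_sup} is formally stated only for $u_1 \leq u_2$, its proof adapts to $u_1 > u_2$ under our separation hypothesis.  The deformed contours $\sigma_1^c, \sigma_2^c$ centered at $\sqrt{u_1/2}$ and $\sqrt{u_2/2}$ are separated by $\Omega(u_1^{1/6}e^{(\log u_1)^{2/3}})$ while extending only $O((\log u_1)^{5/24})$ from their centers, so they do not intersect and no residue term $\RES$ arises.  The same pointwise bound and subsequent Hilbert--Schmidt estimate then go through verbatim.
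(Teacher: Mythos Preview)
Your overall strategy is sound and in places sharper than the paper's, but the execution differs from the paper in two respects and one of your justifications is incorrect as written.

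\textbf{Diagonal blocks.} The paper bounds $I_{1,1}$ and $I_{2,2}$ by direct integration, invoking case~(3) of Lemma~\ref{lem:Ko_sup} and splitting into the four pieces $I_{1,1}^{\pm\pm}$; the dominant contribution $O(s_1^2)=O((\log u_1)^{5/6})$ comes from $I_{1,1}^{--}$. Your projection/trace argument is cleaner and actually yields the stronger bound $O((\log u_1)^{5/8})$. It does, however, rely on a uniform (in $u$) bound for the expected number of GUE eigenvalues in an edge window, which you invoke only as ``semicircle edge density''; within the paper this would need to be extracted, e.g., from Lemma~\ref{lem:Ko_sup}(3) on the diagonal $\yM[1]=\yM[2]$ or from Plancherel--Rotach asymptotics.

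\textbf{The off-diagonal block with first index $u_1>u_2$.} The paper treats this block ($I_{1,2}$ in its notation) by Lemma~\ref{lem:Ko_pointwise}, which is stated precisely for $u_1>u_2$, obtaining the negligible bound~\eqref{eq:KKD}. You instead argue that Lemma~\ref{lem:Ko_sup}(2) adapts to $u_1>u_2$. The conclusion is correct, but your stated reason---that the contours $\sigma_i^c$ ``extend only $O((\log u_1)^{5/24})$ from their centers''---is wrong: those contours extend to infinity (via $\sigma_i^e$). The correct reason no residue term $\RES$ appears is that for $u_1>u_2$ the \emph{undeformed} contours $\gamma_1^c,\gamma_2^c$ already do not cross (this is exactly why $K_e=0$ in that regime, as noted after~\eqref{eq:odK}), and the local modifications $\sigma_i\setminus\gamma_i$ have size $O(u_i^{1/6}\eta_i)=O(u_i^{1/6}(\log u_i)^{5/24})$ in the $z$-plane, which is dominated by the contour separation $\Omega(\sqrt{u_1}-\sqrt{u_2})=\Omega(u_1^{1/6}e^{(\log u_1)^{2/3}})$. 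With that correction the adaptation goes through and the integral estimates of Lemma~\ref{lem:Ko_sup} apply verbatim; alternatively, just cite Lemma~\ref{lem:Ko_pointwise} as the paper does.
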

\begin{proof}
%As for ${\tilde K}_o,$ we have more cases to consider.  
	Set $\tau_i(x) = \frac{x}{\sqrt{2}}{u_i}^{-1/6} + \sqrt{2u_i},$ so that $\tau_{i}(\yM[i]) = y_i.$  Let $\mub = (\log u_1)^{100},$ and consider the following four integrals:
\begin{align*}
  I_{i,j} &=
  \int\limits_{-t_i}^{\mub}
  \int\limits_{-t_j}^{\mub}
  \left|{ \tilde K}_o( u_i, \tau_i(\yM[1]); u_j,\tau_j(\yM[2]))\right|^2\frac{d\yM[1] d\yM[2]}{2u_i^{1/6}u_j^{1/6}}
\end{align*}
for $i,j \in \left\{ 1,2 \right\}.$   As $ \HSN{ \tilde{K}_o \vert_{I_M} }^2 = \sum I_{i,j},$ it suffices to show that each of these integrals has the desired bound.
\noindent \emph{ Bounding $I_{1,1}$ and $I_{2,2}$: }

The details of the proof are nearly identical for $I_{1,1}$ and $I_{2,2},$ and so we give the full proof for just $I_{1,1}.$   
All bounds on $|\tilde{K}_o|$ that we use come from case (3) of Lemma~\ref{lem:Ko_sup}.  We break the integral into four parts, according to the signs of $\yM[i]$ which we denote by $I_{1,1}^{\pm\pm}.$ 
For both $\yM[1] \leq 0$ and $\yM[2] \leq 0$, we have
\[
  \frac{\left|{ \tilde K}_o( u_1, \tau_1(\yM[1]); u_1,\tau_1(\yM[2]))\right|^2}{u_1^{1/3}}
\leq 
  C+C\max( |\yM[1]|,|\yM[2]|)
  e^{-2c(\sqrt{|\yM[1]|} - \sqrt{|\yM[2]|})^2(\sqrt{|\yM[1]|} + \sqrt{|\yM[2]|}) }.
\]
for some $C>0.$ 
Let $s_i = (t_i)_{-}.$
Hence, changing variables in $I_{1,1}^{--}$ by $w_{\pm} = \sqrt{-\yM[1]} \pm \sqrt{-\yM[2]}$ we have, adjusting constants, 
\begin{align*}
  I_{1,1}^{--}
&\leq Cs_1^2 + 
\int\limits_{0}^{2\sqrt{s_1}}
  \int\limits_{0}^{\infty}
  Cw_+^3 e^{-2c(w_{-})^2w_+} {dw_- dw_+} \\
  &\leq Cs_1^{2} + C'\int\limits_{0}^{2\sqrt{s_1}} w_+^{5/2} {dw_+} = O(s_1^2 + s_1^{7/4}).
\end{align*}

For $I_{1,1}^{+-},$ where $\yM[1] \geq 0$ and $\yM[2] \leq 0$, we have
\[
  \frac{\left|{ \tilde K}_o( u_1, \tau_1(\yM[1]); u_1,\tau_1(\yM[2]))\right|^2}{u_1^{1/3}}
\leq 
C\xi_+(\yM[1])+C|\yM[2]|
e^{-2c\yM[1]\sqrt{|\yM[2]|}}.
\]
Therefore changing variables and integrating,
\begin{align*}
  I_{1,1}^{+-}
  &\leq Cs_2\int\limits_0^{\mub} \frac{1}{{1+\yM[1]}}\,d\yM[1] +C 
\int\limits_{0}^{s_2}
  \int\limits_{0}^{\mub}
  \yM[2] e^{-2c\yM[1]\sqrt{\yM[2]}}
  d\yM[1]d\yM[2] \\
  &=O(s_2 \log{\mub} + s_2^{3/2})=O( (\log u_1)^{5/6}).
\end{align*}
A symmetric argument holds for $I_{1,1}^{-+}.$

For $I_{1,1}^{++},$ we have
\[
  \frac{\left|{ \tilde K}_o( u_1, \tau_1(\yM[1]); u_1,\tau_1(\yM[2]))\right|^2}{u_1^{1/3}}
  \leq C\xi_+(\yM[1])\xi_+(\yM[2]).
\]
Changing variables and integrating,
\[
  I_{1,1}^{++}
  \leq C
  \biggl(\int\limits_{0}^{\mub} \frac{d\yM[1]}{1+\yM[1]}\biggr)^2
  = O( (\log \mub)^2).
\]
Thus we have shown that $I_{1,1} =O( (\log u_1)^{5/6}).$

\noindent \emph{ Bounding $I_{2,1}$: }

Here we use cases (1) and (2) of Lemma \ref{lem:Ko_sup}.  These integrals are similar to or simpler than the ones in $I_{1,1}$ and are easily checked to be $O( (\log u_1)^{5/6}).$

\noindent \emph{ Bounding $I_{1,2}$: }

Here we use Lemma \ref{lem:Ko_pointwise}, which when we integrate gives the
following
%\[
%    \left| {\tilde K}_o( u_1, y_1 ; u_2, y_2) \right|
%    \leq \frac{u_1^{1/6}u_2^{1/6}}{u_1^{1/2} - u_2^{1/2}}
%  e^{ C t_1^{3/2} + C t_2^{3/2} }.
%\]
\begin{equation}
  I_{1,2}
  \leq \frac{u_1^{1/6}u_2^{1/6}}{\left(\sqrt{u_1} - \sqrt{u_2}\right)^2}
  (C+e^{ C (t_1)_{-}^{3/2} + C (t_2)_{-}^{3/2} })
  \label{eq:KKD}
\end{equation}
for some absolute constant $C>0.$
By assumption on $u_1$ and $u_2,$ we have $\sqrt{u_1} - \sqrt{u_2} = \Omega(u_1^{1/6}e^{ (\log u_1)^{2/3} })$ uniformly in $u_2.$  Hence we have
\[
  I_{1,2}
 = e^{-\Omega((\log u_1)^{2/3})}.
\]

\end{proof}

\begin{proof}[Proof of Proposition  \ref{prop:lt_E}]

	The proof follows from \eqref{eq:det2lip}, Lemma \ref{lem:KeHS}, Lemma \ref{lem:KoHS} and the observation that by \eqref{eq:KKD},
\(
\HSN{ {\tilde K}  \vert_{I_M}- {\tilde K}^D \vert_{I_M} } = \sqrt{I_{1,2}}.
\)

\end{proof}

\begin{proof}[Proof of Proposition  \ref{prop:rt_E}]

	The only difference between this case and the one in the proof of Proposition \ref{prop:lt_E} is that we can sharpen the estimate of 
\(
\sqrt{I_{1,2}}=\HSN{ {\tilde K}  \vert_{I_M}- {\tilde K}^D \vert_{I_M} }.
\)
Using Lemma \ref{lem:rt_Ko_pointwise}, we have that for the range of $t_i$ considered, there is a constant $C>0$ so that
\[
	\HSN{{\tilde K}_o \vert_{I_M}}^2
	\leq
C\frac{u_1^{1/6}u_2^{1/6}}{\left(\sqrt{u_1} - \sqrt{u_2}\right)^2}
e^{ -\tfrac 23( (t_1)^{3/2} + (t_2)^{3/2})}.
\]
\end{proof}

\noindent
{\bf Acknowledgement}
We thank Gil Kalai for posing the question \cite{Gil} on Mathoverflow, 
which started this project. We thank Percy Deift 
for several enlightening discussions concerning \cite{DIK},
Paul Bourgade for pointing out \cite{LedouxRider} to us, and Alexei Borodin 
for several discussions concerning Conjecture \ref{conj-lower}.

\small
\setlength\baselineskip{1.1em}
  
\bibliographystyle{alpha}
\bibliography{LIL}
\end{document}